\newtheorem{theorem}{Theorem}[section]
\newtheorem{proposition}[theorem]{Proposition}
\newtheorem{lemma}[theorem]{Lemma}
\newtheorem{corollary}[theorem]{Corollary}
\newtheorem*{namedtheorem}{\theoremname}
\newcommand{\theoremname}{testing}
\newenvironment{named}[1]{\renewcommand{\theoremname}{#1}\begin{namedtheorem}}{\end{namedtheorem}}
\theoremstyle{definition}
\newtheorem{definition}[theorem]{Definition}
\newtheorem{remark}[theorem]{Remark}
\newtheorem{question}[theorem]{Question}
\title[Lorenz links, Minimal Braids, Positive braids, and Geometric Types]{Lorenz links, T-links, Minimal Braids, Positive braids with a full twist, and Geometric Types}
\author{Thiago de Paiva}
\address{Beijing International Center for Mathematical Research, Peking University, Beijing 100871, China P.R.}
\email{thhiagodepaiva@gmail.com}
\begin{document}

\begin{abstract}
Lorenz links and T-links are equivalent families of links by work of
Birman--Kofman. Lorenz links arise as periodic orbits of the Lorenz system,
whereas T-links are closures of certain positive braids. Birman, Williams, and
Franks showed that every Lorenz link admits a positive braid representative
with at least one full twist that realizes the braid index. In this paper, we
study Lorenz links from the perspective of these minimal braid representatives.
We compute them explicitly from the parameters of the corresponding T-links and
show that they are precisely a family of positive braids with a full twist,
which we call V-link braids. This gives an explicit correspondence between
V-links and T-links.

We use this correspondence to study equivalences among T-link presentations.
As a first application, we obtain new criteria for distinguishing T-links
directly from their parameters. We then recover the Birman--Kofman equivalence
between T-link presentations from the V-link/T-link correspondence. We prove
that, under mild assumptions, this equivalence produces two
genuinely distinct T-link presentations of the same link. We also generalize
this phenomenon: under further natural conditions, we produce four distinct T-link presentations of the same T-link.
More generally, we prove that non-uniqueness can be arbitrarily large even when
the braid index is fixed: for a fixed braid index, the number of distinct
V-link and T-link presentations of the same link can be made arbitrarily large.

In contrast, we also initiate the study of uniqueness of T-link presentations.
We prove that the opposite phenomenon occurs: there are infinite families of
Lorenz links which admit a unique T-link presentation, after excluding the obvious destabilization case.
In terms of the Lorenz flow, this gives families of periodic orbits of the
Lorenz system whose associated Lorenz links admit only one T-link parameter
description in their isotopy class, up to these standard destabilizations.

Finally, we use the V-link form to place Lorenz links in the setting of
positive braids with a full twist, where several techniques for detecting geometric type apply. 
We prove general criteria for such braids to be
satellite or hyperbolic and apply them to V-links and T-links. As a consequence,
we obtain new families of satellite and hyperbolic T-links and generalize
several known results.
\end{abstract}

\maketitle
\section{Introduction}

The   meteorologist E. N. Lorenz found the following system of three ordinary differential equations \cite{lorenz1963deterministic} in $\mathbb{R}^{3}$ 
$$\frac{dx}{dt} = 10(y - x), \,\,\, \frac{dy}{dt} = 28x - y - xz, \,\,\, \frac{dz}{dt} = xy - \frac{8}{3}z  $$
when he was trying to discover the equations that govern weather patterns.
The flow of this system has many knotted closed periodic orbits, which are called Lorenz knots. Many of these  knotted  orbits are linked together, which are called Lorenz links. Lorenz links were the first interesting examples of links to appear in this unexpected setting, dynamical systems. They piqued the interest of mathematicians. Since then, many studies have aimed to understand their topology and geometry.

Guckenheimer and Williams~\cite{GuckenheimerWilliams}, and Tucker~\cite{Tucker} proved that we can study Lorenz links by studying links in the Lorenz template, which is an embedded branched surface in $\mathbb{R}^{3}$. In other words, they proved that for every Lorenz link there is an equivalent link in the Lorenz template, and every link in the template is a Lorenz link. Therefore, we can define Lorenz links as links that have a representative that can be embedded into the Lorenz template.

After that, topologists in knot theory started to study Lorenz links with diagrams induced by the Lorenz template. See, for example, Birman and Williams \cite{MR682059}, Williams \cite{MR758900}, El-Rifai \cite{Necessary}, Gomes, Franco and Silva \cite{GomesFrancoSilva:Partial, GomesFrancoSilva:Farey}. Then, Birman and Kofman started studying Lorenz links with different diagrams. More precisely, they proved that every Lorenz link can be described as the closure of a certain positive braid as follows \cite{newtwis}:
$$(\sigma_{1}\sigma_{2}\dots\sigma_{r_{1} - 1})^{s_{1}} (\sigma_{1}\sigma_{2}\dots\sigma_{r_{2} - 1})^{s_{2}}\dots (\sigma_{1}\sigma_{2}\dots\sigma_{r_{n} - 1})^{s_{n}},$$where $2 \leq r_{1}<\dots< r_{n}$ and $s_{i}>0$ for $i \in \lbrace 1, \dots, n \rbrace.$ They denoted the closure of the last braid by $T((r_{1}, s_{1}), \dots, (r_{n}, s_{n}))$ and called it a T-link, where $\sigma_1, \dots, \sigma_{r_n-1}$ are the standard generators of the braid group $B_{r_n}$. Then, many other people studied Lorenz links using this equivalence with T-links, for example, de Paiva and Purcell \cite{de2021satellites, dePaivaPurcell2024}, de Paiva \cite{Lorenzknots, de2022torus, de2022hyperbolic}.


From Thurston's perspective, all non-trivial knots in $S^3$ can be classified into one of three geometric types called torus, satellite, and hyperbolic. 
A torus knot, denoted by T$(p, q)$ with $p, q$ integers, is a knot that can be embedded in the surface $T$ of an unknotted torus, where $p, q$ denote the number of times that T$(p, q)$ wraps along the longitude, meridian, respectively, of $T$. 
Only the two integers $p$, $q$ are enough to classify the torus knot $T(p, q)$. 
A satellite knot $K$ is a knot that can be embedded in the tubular neighbourhood of another knot $C$. The boundary of the tubular neighbourhood of $C$ is a surface, which is called an essential torus, in the exterior of $K$ in $S^3$. The essential torus provides additional information that helps classify the satellite knot $K$. 
Finally, a hyperbolic knot is a knot whose complement in $S^3$ admits a complete hyperbolic metric. 

Mostow-Prasad rigidity says that when a knot is hyperbolic, all hyperbolic structures in its complement are isometric. 
Thus, properties defined based on the hyperbolic metric are invariants of the hyperbolic knot. They are called hyperbolic invariants.
Examples of hyperbolic invariants are volume and systole.
To classify a hyperbolic knot we study its hyperbolic invariants.
As a consequence, we also obtain a procedure for distinguishing knots.
But if we want to apply geometry to study a family of knots, the first question we need to answer is the following.

\begin{question}
Given a family of knots, which are torus, satellite, hyperbolic knots?
\end{question}

In this paper, we study Lorenz links from the perspective of their minimal
braid diagrams.  This viewpoint leads to an explicit correspondence between
T-links and minimal V-link representatives.  We use this correspondence to
obtain a parameter-counting obstruction to isotopy among T-links, to recover
and extend the Birman--Kofman parameter equivalence, and to study uniqueness
and non-uniqueness of T-link presentations.  Finally, the V-link viewpoint
places Lorenz links in the setting of positive braids with a full twist, where
we prove new geometric-type criteria and generalize several known results for
T-links and Lorenz links.

The starting point is a theorem of Birman and Williams, which states that every
Lorenz link admits a positive braid representative with at least one full twist
\cite[Theorem~5.1]{MR682059}; by a result of Franks and Williams, this
representative realizes the braid index \cite[Corollary~2.4]{Franks}. Using a
different approach, de Paiva proved that every T-link admits such a positive
full-twist representative, although without explicitly computing the associated
minimal braid from the T-link parameters \cite[Theorem~3.5]{Lorenzknots}. We
make this correspondence explicit. Starting from the parameters of a T-link, we
compute its minimal braid representative and show that these minimal
representatives are precisely the braids in a family that we call V-link braids.

V-link braids are positive braids with at least one full twist, defined as
follows: let \(p,q\) be positive integers with
\(2\le p\le q\).  Let
\[
2\le u_1<\cdots<u_m\le p
\qquad\text{and}\qquad
2\le r_1<\cdots<r_n<p
\]
be possibly empty sequences, and let
\[
v_1,\dots,v_m,s_1,\dots,s_n
\]
be positive integers.  The corresponding V-braid is the positive braid on
\(p\) strands
\begin{align*}
&(\sigma_{p-1}\sigma_{p-2}\dots\sigma_{{p-u_1+1}})^{v_1}\dots(\sigma_{p-1}\sigma_{p-2}\dots\sigma_{{p-u_m+1}})^{v_m}(\sigma_1\sigma_2\dots\sigma_{r_1-1})^{s_1}\dots(\sigma_{1}\sigma_{2}\dots\sigma_{r_n-1})^{s_n}\\
&(\sigma_{1}\sigma_{2}\dots\sigma_{{p-1}})^{q}.
\end{align*}
If one of the sequences is empty, the corresponding product is omitted.  The
closure of this braid is called the V-link
\[
V((u_1,\overline{v_1}),\dots,(u_m,\overline{v_m}),
(r_1,s_1),\dots,(r_n,s_n),(p,q)).
\]
Since \(q\ge p\), the final block contains at least one positive full twist on
all \(p\) strands.

Then we find an explicit relation for obtaining an element of one of these two families, V-links and T-links, from the other given by the following theorem.

\begin{named}{Theorem~\ref{equivalenceTV-links}}
Minimal braids of T-links are the same as V-link braids. 
In other words, a minimal braid of a T-link represents a V-link and a V-link represents a minimal braid of a T-link.
Furthermore, they are related as follows: the V-link
$$V((u_1,\overline{v_1}), \dots, (u_{m},\overline{v_{m}}), (r_1,s_1), \dots, (r_{n},s_{n}), (p, q))$$ 
is equivalent to the T-links
\begin{align*}
&T( (r_1,s_1), \dots, (r_{n},s_{n}), (p, q-u_m), (p+ v_m, u_m - u_{m-1}), \dots, (p+v_m+\dots + v_{2}, u_2-u_1),\\
&(p+ v_m+ \dots + v_{1}, u_1))\text{ and }T((u_1, v_1), \dots, (u_{m}, v_{m}), (q, p-r_n), (q+ s_n, r_n - r_{n-1}), \dots,\\
&(q+s_n+\dots + s_{2}, r_2-r_1), (q+ s_n+ \dots + s_{1}, r_1)).
\end{align*}
\end{named}

As a first application of Theorem~\ref{equivalenceTV-links}, we obtain a
parameter-counting obstruction to isotopy among T-links. Since T-links and
Lorenz links are equivalent families, this gives an explicit criterion for
distinguishing Lorenz links directly from their T-link parameters; see
Theorem~\ref{thm:parameter-counting-obstruction}. More precisely, if
\[
L=T((r_1,s_1),\dots,(r_n,s_n))
\qquad\text{and}\qquad
L'=T((r'_1,s'_1),\dots,(r'_m,s'_m))
\]
are T-links with $s_n, s'_m>1$ and
\[
n>2br(L')-2,
\]
where $br(L')$ is the braid index of $L'$, then \(L\) and \(L'\) are not isotopic. In particular, since \(br(L')\le r'_m\),
it is enough to require
\[
n>2r'_m-2;
\]
see Corollary~\ref{cor:parameter-counting-obstruction-rm}. This provides an
explicit and easily checkable way to distinguish many T-links, and hence many
Lorenz links, from their integer parameters. To the best of our knowledge, this is the first explicit parameter-counting
criterion for distinguishing T-links, and hence Lorenz links, directly from
their T-link parameters.

The correspondence also sheds light on the non-uniqueness of T-link
presentations.  We recover the Birman--Kofman parameter equivalence as the
T-link-level expression of a single V-link presentation: the two T-link
presentations associated to a V-link are exchanged by the Birman--Kofman
transformation; see Corollary~\ref{cor:BK}.  We then determine simple
conditions under which these two presentations are genuinely distinct after
the standard simplifications; see
Proposition~\ref{prop:when-two-T-presentations-coincide}.  Using the centrality
of the full twist, we further generalize this equivalence and show that, under
natural non-degeneracy assumptions, one can obtain four distinct T-link
presentations of the same Lorenz link; see
Theorem~\ref{thm:four-distinct-T-link-presentations}.  Finally, we prove that
this non-uniqueness is not caused merely by increasing the braid index: even
with the braid index fixed, the number of distinct V-link and T-link
presentations of the same link can be made arbitrarily large; see
Theorem~\ref{thm:fixed-braid-index-many-presentations}.

The correspondence also reveals a complementary uniqueness phenomenon.  We
prove that there are infinite families of Lorenz links with a unique T-link
presentation: after excluding the obvious destabilization case
\(s_n=1\), these links admit only one T-link presentation; see
Theorem~\ref{thm:unique-T-link-presentations}.

These results show that the V-link viewpoint is useful for understanding when
different parameter presentations define the same Lorenz link.  The usefulness
of Theorem~\ref{equivalenceTV-links} is not limited to parameter equivalences:
it also plays an important role in the study of geometric types.  Its main
advantage is that it places every T-link in a braid-index-realizing positive
braid position with an explicit full twist.

This position is especially well suited for studying the geometry of the link
complement.  Several techniques for detecting torus, satellite, and hyperbolic
knots from braid diagrams apply most effectively to positive braids containing
a full twist.  In this setting, the braid axis provides a natural way to control
essential surfaces in the complement.  Thus the V-link description transforms
the problem of determining geometric types of Lorenz links into a problem about
positive braids with a full twist, where essential tori and annuli can be
studied using the braid axis.

We next use the V-link viewpoint for a second purpose: the study of geometric
types of Lorenz knots.
There has been some progress in this direction, but previous work on the
geometric types of T-knots has mainly focused on two particular subfamilies. They are T-knots obtained by adding full twists on torus knots and T-knots whose standard T-link presentations already contain at least two full twists.

T-knots obtained from full twists on torus knots have the form
\[
K=T((r_1,r_1s_1),\dots,(r_n,r_ns_n),(p,q)),
\]
where \(\gcd(p,q)=1\). When \(q\ge r_n\), this knot is naturally represented
as a V-link of the form
\[
V((r_1,s_1),\dots,(r_n,s_n),(\min\{p,q\}, \max\{p,q\})).
\]
Otherwise, the corresponding minimal V-link representative has the form
\[
V((q,\overline{p-r_n}),(r_1,r_1s_1),\dots,(r_n,r_ns_n+q)),
\]by Theorem~\ref{equivalenceTV-links}.
Thus, T-knots obtained from full twists on torus knots form a rather special subfamily from
the V-link point of view. The classification of when such knots are torus
knots is almost complete; see~\cite{de2022torus}. Some sufficient conditions for them
to be satellite or hyperbolic have also been found \cite{de2021satellites, unexpected, dePaivaPurcell2024}. However,
the complete geometric classification of this family is still open. The criteria developed in this
paper are not primarily aimed at improving the known classification of this
subfamily; rather, they apply more naturally to the broader class of minimal
V-link representatives with a full twist.

The second subfamily consists of T-knots whose standard T-link presentation
contains at least two full twists. These have the form
\[
T((r_1,s_1),\dots,(r_{n-1},s_{n-1}),(r_n,s_n)),
\]
with $s_n\ge 2r_n.$ For this class, there are some results detecting hyperbolic \cite[Corollary 1.3]{MR4494619} and satellite knots \cite[Theorem 5.4]{dePaivaPurcell2024}.

In this paper, we investigate the geometric types of Lorenz links from the
minimal braid perspective. Instead of requiring the original T-link presentation
to contain full twists, we pass to the minimal V-link representative, which is a
positive braid with a full twist by Theorem~\ref{equivalenceTV-links}. This is
a larger and more natural setting for Lorenz links, since every T-link admits
such a minimal representative. In particular, the results in this paper extend the
known satellite and hyperbolicity criteria for T-links whose standard T-link
presentation contains at least two full twists. 


For the satellite case, we study in Section~\ref{section6} when V-links are
satellite links. We give two sets of sufficient conditions on the parameters of V-links that guarantee they are satellite links.
Applying the correspondence between V-links and T-links, we obtain new families
of satellite T-links. These families extend previously known satellite criteria
for T-links. For example, the following corollary of
Theorem~\ref{satellitecaseone} extends
\cite[Theorem~5.4]{dePaivaPurcell2024}.

\begin{named}{Corollary~\ref{satellitecase1}}
Suppose that there are positive integers $i, j,$ and $d>1$ such that 
$r_1, \dots, r_i,$ $u_1, \dots, u_j$ are less than or equal to $d$ and 
$r_{i+1}, \dots, r_n, s_{i+1}, \dots, s_n, u_{j+1}, \dots, u_m, v_{j+1}, \dots, v_m,$ $p, q$ are multiples of and greater than $d$. 
Then, the T-links 
\begin{align*}
&T((r_1,s_1), \dots, (r_{i},s_{i}), (r_{i+1},s_{i+1}), \dots, (r_{n},s_{n}), (p, q-u_m), (p+ v_m, u_m - u_{m-1}), \dots,\\
&(p+ v_m+\dots + v_{j+1}, u_{j+1} - u_{j}), (p+ v_m+\dots + v_{j}, u_{j}-u_{j-1}), \dots,\\
&(p+ v_m+\dots + v_{2}, u_2-u_1), (p+ v_m + \dots + v_{1}, u_1))\text{ and }
\end{align*}
\begin{align*}
&T((u_1, v_1), \dots, (u_{j}, v_{j}), (u_{j+1}, v_{j+1}), \dots, (u_{m}, v_{m}), (q, p-r_n), (q+ s_n, r_n - r_{n-1}), \dots,\\
&(q+s_n+\dots + s_{i+1}, r_{i+1}-r_{i}), (q+ s_n+\dots + s_{i}, r_{i}-r_{i-1}), \dots,\\
&(q+s_n+\dots + s_{2}, r_2-r_1), (q+ s_n+\dots + s_{1}, r_1))
\end{align*}
are satellite links.
\end{named}

A second satellite criterion is given in
Corollary~\ref{corollarysatellitecase2}, which generalizes
\cite[Theorem~6.1]{de2021satellites}.

For hyperbolicity, we first study positive braids with at least one full twist in Section~\ref{Hyperbolic}.
In Section~\ref{Hyperbolic}, we give several sufficient conditions under which the
closure of such a braid is hyperbolic.  The strategy is to rule out both
satellite and torus knots: we first prove that, under suitable arithmetic and
braid-theoretic hypotheses, the closure is atoroidal, and then prove that it
cannot be a torus knot.  One representative result is the following.

\begin{named}{Theorem~\ref{Hyperbolicity3}}  
Let $p, q, r, k$ be positive integers such that $p>r > q$, $p-r\geq q$, and $r$ doesn't divide $p$. Furthermore, suppose that $gcd(r, q) = 1$, $gcd(p, q) = 1$, or $gcd(p, r) = 1$.  
Let $\beta$ be a positive braid with $r$ strands and at least one full twist on $r$ strands with crossing number different from $(p-1)(r-q)$. Then, the knot $K$ given by the closure of the positive braid $$\beta(\sigma_{1}\dots \sigma_{p-1})^{kp+q}$$ is hyperbolic.
\end{named}

Further hyperbolicity criteria are given in Theorems~\ref{Hyperbolicity1} and~\ref{Hyperbolicity2}.

We then apply these hyperbolicity criteria to V-links and translate the results
to T-links using Theorem~\ref{equivalenceTV-links}. The key point is that the minimal V-link representative can display a full-twist structure that is not visible
in the original T-link presentation. Thus, our criteria apply to T-links beyond
the range of previous full-twist criteria, producing new families of hyperbolic
T-links and extending known hyperbolicity results for T-knots with full twists.
One representative consequence is the following.

\begin{named}{Corollary~\ref{hyperbolicT-links3}}
Let $r_1, \dots, r_n, s_1, \dots, s_n, u_1, \dots, u_m, v_1, \dots, v_m, p, q, k$ be positive integers such that $2\leq r_1< \dots < r_{n} < p$, $2\leq u_1< \dots < u_{m} < p$, $q<p$ with $gcd(p, q) = 1$. Denote $\mathcal{C} = (r_1-1)s_1 + \dots + (r_n-1)s_n+(u_1-1)v_1 + \dots + (u_m-1)v_m$.
\begin{itemize}
\item Suppose $s_n \geq r_{n}> q\geq u_{m}$, $p-r_{n}\geq q$, $r_{n}$ doesn't divide $p$, $gcd(r_{n}, q) = 1$ or $gcd(p, r_{n}) = 1$, and $\mathcal{C}$ is different from
$(p-1)(r_n-q)$, or
\item Suppose $v_m\geq u_{m}>q\geq r_{n}$, $p-u_{m}\geq q$, $u_{m}$ doesn't divide $p$, $gcd(u_{m}, q) = 1$ or $gcd(p, u_{m}) = 1$, and $\mathcal{C}$ is different from
$(p-1)(u_{m}-q)$.
\end{itemize}
Then, the T-knots
\begin{align*}
&T((r_1,s_1), \dots, (r_{n},s_{n}), (p, kp+q-u_m), (p+ v_m, u_m - u_{m-1}), \dots, (p+v_m+ \dots + v_{2}, u_2-u_1), \\
&(p+ v_m+\dots + v_{1}, u_1))\text{ and }T((u_1, v_1), \dots, (u_{m}, v_{m}), (kp+q, p-r_n), (kp+q+ s_n, r_n - r_{n-1}),\\
&\dots,(kp+q+s_n+ \dots + s_{2}, r_2-r_1), (kp+q+ s_n+ \dots + s_{1}, r_1))
\end{align*}
are hyperbolic.
\end{named}

Further hyperbolicity criteria for T-knots are given in
Corollaries~\ref{C1} and~\ref{C2}. In particular,
Corollary~\ref{C1} generalizes \cite[Corollary~1.3]{MR4494619}.

We conclude that minimal braid diagrams provide a useful framework for studying
Lorenz links.  On the one hand, V-link representatives give a rigid
braid-theoretic way to compare parameter presentations.  This allows us to
recover and extend the Birman--Kofman parameter equivalence, to obtain
obstructions to isotopy among T-links, and to prove new uniqueness and
non-uniqueness results for T-link presentations.  In particular, the
parameter-counting obstruction of
Theorem~\ref{thm:parameter-counting-obstruction} and
Corollary~\ref{cor:parameter-counting-obstruction-rm} gives an explicit way to
distinguish many T-links directly from their parameters.  We also show that the
number of distinct V-link and T-link presentations can be made arbitrarily
large even when the braid index is fixed, while some infinite families of
Lorenz links admit a unique T-link presentation after the standard
simplifications.

On the other hand, V-link representatives place Lorenz links in the setting of
positive braids with a full twist, where braid-axis techniques can be used to
detect essential surfaces in the complement.  This allows us to recover and
generalize known results, and to identify new families of satellite and
hyperbolic Lorenz links.

The results of this paper do not give a complete classification of geometric
types of Lorenz knots.  Rather, they provide criteria that apply to broad
families arising from minimal V-link representatives.  The remaining cases
include those whose minimal V-link representatives satisfy none of the torus,
satellite, or hyperbolicity criteria proved here.  They also include the
still-open cases, discussed above, for T-knots obtained by adding full twists
to torus knots.

\subsection{Organisation}

This paper is organized as follows. In Section~\ref{section2}, we prove some
isotopies that are used throughout the paper. In Section~\ref{section3}, we
prove Theorem~\ref{equivalenceTV-links}. The proof follows the main idea of
\cite[Theorem~3.5]{Lorenzknots}, but we simplify the argument using the
isotopies introduced in Section~\ref{section2}. In Section~\ref{section4}, we
use this equivalence to obtain a parameter-counting obstruction to isotopy
among T-links.

In Section~\ref{section5}, we study equivalent V-link and T-link presentations.
We recover the Birman--Kofman parameter equivalence from the V-link/T-link
correspondence, give conditions under which the corresponding T-link
presentations are genuinely distinct, and construct families with many
distinct presentations. We also exhibit infinite families of Lorenz links with
a unique T-link presentation, after excluding the obvious destabilization case.

In Section~\ref{section6}, we study conditions on the parameters of V-links
that guarantee satellite links, and then apply these conditions to T-links
using Theorem~\ref{equivalenceTV-links}. In Section~\ref{Hyperbolic}, we find
conditions under which positive braids with a full twist are hyperbolic. In the
final section, Section~\ref{section7}, we apply the criteria from
Section~\ref{Hyperbolic} to obtain new families of hyperbolic V-links and
T-links, and to generalize some known families.

\subsection*{Acknowledgments}

Part of this work was carried out while the author was a PhD student and later
a research assistant at Monash University, with support from the Australian
Research Council grant DP210103136.  The final version was prepared during the
author's postdoctoral appointment at Peking University.

\section{Isotopies}\label{section2}

In this section we establish some isotopies that we use later.

The first isotopy below was proved in \cite[Proposition 3.2]{Lorenzknots}. We add its proof here because it will be useful for understanding the other isotopies and their proofs.

\begin{proposition}\label{isopoty1}
Consider $\beta$ a braid with $r$ strands and $p, q$ integers such that $0<q\leq r <p$. Then, there is an isotopy that takes the closure of the braid $\beta(\sigma_{1}\dots \sigma_{p-1})^{q}$
to the braid
$$(\sigma_{r-1}\dots \sigma_{r-q+1})^{p-r}\beta(\sigma_{1}\dots \sigma_{r-1})^{q}$$
if $q>1$ and
$\beta(\sigma_{1}\dots \sigma_{r-1})$
if $q=1$.
Moreover, this isotopy happens in the complement of the braid axis of $\beta$.
\end{proposition}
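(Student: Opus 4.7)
The plan is to argue by induction on $p - r$, with the base case $p = r$ being immediate since the prepended factor $(\sigma_{r-1}\dots\sigma_{r-q+1})^{0}$ is empty and the formula collapses to $\beta(\sigma_{1}\dots\sigma_{r-1})^{q}$. For the inductive step, I would establish a one-step reduction: the closure of $\beta(\sigma_{1}\sigma_{2}\dots\sigma_{p-1})^{q}$ on $p$ strands equals the closure of $(\sigma_{r-1}\sigma_{r-2}\dots\sigma_{r-q+1})\,\beta\,(\sigma_{1}\sigma_{2}\dots\sigma_{p-2})^{q}$ on $p-1$ strands (for $q\geq 2$; for $q=1$ the prepended factor is empty). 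Because $\sigma_{r-1}\dots\sigma_{r-q+1}$ involves only strands $1,\dots,r$, the new leading braid is again an element of $B_r$ extended trivially, so the reduction can be iterated without changing $r$; $p - r$ iterations then prepend $(\sigma_{r-1}\dots\sigma_{r-q+1})^{p-r}$ and leave the trailing factor as $(\sigma_{1}\dots\sigma_{r-1})^{q}$, which is the claim.

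The one-step reduction itself is carried out by cyclic conjugation, braid relations, and a single Markov destabilization. When $q = 1$, the word $\beta(\sigma_{1}\dots\sigma_{p-1})$ contains $\sigma_{p-1}$ exactly once, at the very end, since $\beta$ uses only $\sigma_{1},\dots,\sigma_{r-1}$; a direct Markov destabilization removes it and yields $\beta(\sigma_{1}\dots\sigma_{p-2})$ on $p-1$ strands. When $q\geq 2$, the key observation is that $\sigma_{p-1}$ commutes with $\sigma_{1},\dots,\sigma_{p-3}$, so inside the expansion of $(\sigma_{1}\dots\sigma_{p-1})^{q}$ neighboring occurrences of $\sigma_{p-1}$ can be brought adjacent to one another up to an intervening $\sigma_{p-2}$. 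Each resulting pattern $\sigma_{p-1}\sigma_{p-2}\sigma_{p-1}$ is then rewritten as $\sigma_{p-2}\sigma_{p-1}\sigma_{p-2}$ by the braid relation, reducing the number of $\sigma_{p-1}$ occurrences by one while introducing a compensating factor among the lower-index generators. Iterating this procedure eliminates all but one occurrence of $\sigma_{p-1}$; a cyclic permutation then places that lone $\sigma_{p-1}$ at the end of the word, Markov destabilization removes it together with strand $p$, and a final cyclic rearrangement puts the remaining braid in the claimed form $(\sigma_{r-1}\dots\sigma_{r-q+1})\,\beta\,(\sigma_{1}\dots\sigma_{p-2})^{q}$.

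The moreover clause is automatic: braid-relation moves are local isotopies between adjacent strands, and Markov destabilization is realized by pulling the outermost strand off the braid within a small tubular neighborhood; neither move crosses the braid axis of $\beta$, so the whole isotopy lives in its complement. The hardest part is the algebraic bookkeeping for $q \geq 2$. One must verify carefully that the compensating generators produced by repeated use of the commutation rule and the braid relation accumulate in the correct order so as to give exactly $\sigma_{r-1}\sigma_{r-2}\dots\sigma_{r-q+1}$ as the prepended factor, and not some syntactically different but permutation-equivalent word. This requires either a delicate inner induction on $q$ nested within the outer induction on $p - r$, or an explicit normal-form computation for $(\sigma_{1}\dots\sigma_{p-1})^{q}$ that isolates a single $\sigma_{p-1}$ at an extremity and reveals the desired compensating string.
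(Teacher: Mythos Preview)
Your overall plan --- reduce one strand at a time and iterate $p-r$ times --- matches the paper exactly. The issue is with your one-step reduction. The algebraic manipulation you describe (commute the $\sigma_{p-1}$'s together, apply the braid relation, then Markov-destabilize the $p$-th strand) does work, but it produces the compensating factor $\sigma_{p-2}\sigma_{p-3}\cdots\sigma_{p-q}$ sitting on the \emph{rightmost} strands, not $\sigma_{r-1}\cdots\sigma_{r-q+1}$ on strands $r-q+1,\dots,r$ as you claim. Concretely, the identity your procedure encodes is
\[
(\sigma_1\cdots\sigma_{p-1})^q \;=\; (\sigma_1\cdots\sigma_{p-2})^q\,\sigma_{p-1}\sigma_{p-2}\cdots\sigma_{p-q},
\]
and after one cyclic shift plus one destabilization this leaves $(\sigma_{p-2}\cdots\sigma_{p-q})\,\beta\,(\sigma_1\cdots\sigma_{p-2})^q$ on $p-1$ strands. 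The new leading word now occupies all $p-1$ strands, so your induction cannot be rerun with the same $r$. A ``final cyclic rearrangement'' alone will not repair this, since cycling does not change generator indices; you would have to push the compensating block leftward through both $(\sigma_1\cdots\sigma_{p-2})^q$ and $\beta$ using further braid relations, and that interaction with $\beta$ is exactly the bookkeeping you flagged as unresolved.

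The paper sidesteps this completely with a geometric move: rather than destabilizing the $p$-th strand, it follows the strand that starts at position $r-q+1$ once around the closure and observes that it re-enters at position $r+1$ as a single under-arc. Shrinking that arc removes the $(r+1)$-st strand and deposits the crossings $\sigma_{r-1}\cdots\sigma_{r-q+1}$ immediately above $\beta$, on the first $r$ strands. Because the prefix already lives in $B_r$, the identical move applies verbatim at every stage, and the ``delicate inner induction'' you anticipated never arises.
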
 

\begin{proof}
The case $q=1$ immediately follows by applying some destabilization moves on the right of $B = \beta(\sigma_{1}\dots \sigma_{p-1})^{q}$. So we consider that $q>1$.

We start with the $(r-q+1)$-st strand at the top of the braid $B$.
If we follow this strand anticlockwise, then it goes around the braid closure to be at the $(r-q+1)$-st strand at the bottom of the braid. Then, as $q\leq r$, it continues until passes through the sub-braid $(\sigma_1\dots \sigma_{p-1})^{q}$ and finally ends at the $(r+1)$-st strand.
Hence the $(r-q+1)$-st strand and the $(r+1)$-st strand are connected by an under strand that goes one time around the braid closure, as illustrated by the red strand in the second drawing of Figure~\ref{Isopoty1}.  
This means that we can remove one strand from the braid $B$ by shrinking this under strand until it becomes an under strand between the $(r-q+1)$-st strand and the $(r)$-st strand, as shown by the third drawing in Figure~\ref{Isopoty1}. After that, we obtain the braid
$$(\sigma_{r-1}\dots \sigma_{r-q+1})\beta(\sigma_1\dots \sigma_{p-2})^{q}.$$

\begin{figure}
\includegraphics[scale=0.25]{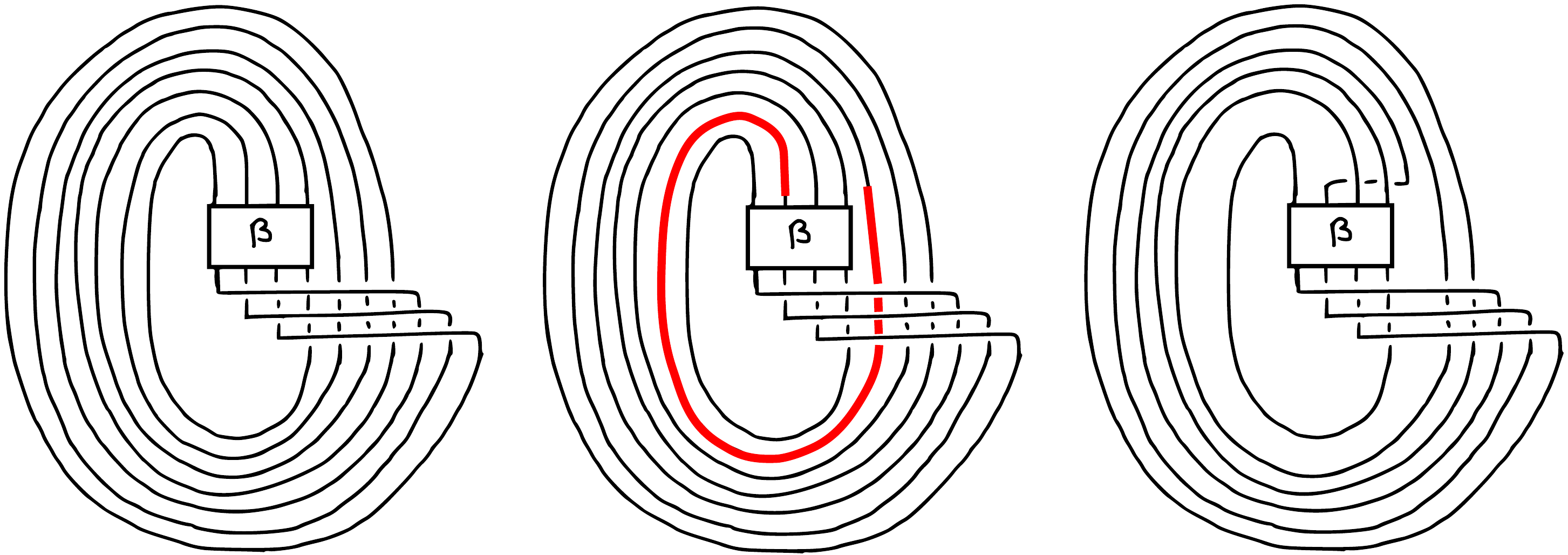} 
\caption{We push down and shrink the red strand in the second link to reduce one strand from the first link. This yields the third link.}
\label{Isopoty1}
\end{figure}

The sub-braid $(\sigma_{r-1}\dots \sigma_{r-q+1})\beta$ has $r$ strands. Then, we can apply the last isotopy again. This yields the braid
$$(\sigma_{r-1}\dots \sigma_{r-q+1})(\sigma_{r-1}\dots \sigma_{r-q+1})\beta(\sigma_1\dots \sigma_{p-3})^{q}.$$
The sub-braid $(\sigma_{r-1}\dots \sigma_{r-q+1})^2\beta$ still has $r$ strands. Then, it continues until we apply this isotopy $p-r$ times in total to obtain the braid
$$(\sigma_{r-1}\dots \sigma_{r-q+1})^{p-r}\beta(\sigma_{1}\dots \sigma_{r-1})^{q}. \qedhere$$
\end{proof}

\begin{proposition}\label{isopoty2}
Consider $\beta$ a braid with $r$ strands and $p, q$ integers such that $1< q\leq r< p$. Then, there is an isotopy that takes the closure of the braid $\beta(\sigma_{p-1}\dots \sigma_{1})^{q}$
to the braid
$$\beta(\sigma_{r-q+1}\dots \sigma_{r-1})^{p-r}(\sigma_{r-1}\dots \sigma_{1})^{q}.$$
\end{proposition}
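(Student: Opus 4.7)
The plan is to deduce Proposition~\ref{isopoty2} from Proposition~\ref{isopoty1} by a symmetry argument, bypassing a direct shrinking analysis. The key observation is that two operations preserve the closure of a braid as a link: (i) cyclic conjugation, and (ii) word reversal. For a braid word $w=\sigma_{i_1}\sigma_{i_2}\dots\sigma_{i_k}$, write $w^{\text{rev}}=\sigma_{i_k}\sigma_{i_{k-1}}\dots\sigma_{i_1}$. This satisfies $(uv)^{\text{rev}}=v^{\text{rev}}u^{\text{rev}}$ and, critically, $(\sigma_{p-1}\sigma_{p-2}\dots\sigma_1)^{\text{rev}}=\sigma_1\sigma_2\dots\sigma_{p-1}$, the identity that turns the twist appearing in the present proposition into the twist used in Proposition~\ref{isopoty1}.

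Starting from $\beta(\sigma_{p-1}\dots\sigma_1)^q$, I would first take the reverse word $(\sigma_1\dots\sigma_{p-1})^q\beta^{\text{rev}}$ and cyclically conjugate to obtain $\beta^{\text{rev}}(\sigma_1\dots\sigma_{p-1})^q$. Since $\beta^{\text{rev}}$ is a braid on $r$ strands and $q>1$, Proposition~\ref{isopoty1} applies with $\beta^{\text{rev}}$ in place of $\beta$, giving the equivalent closure of
$$(\sigma_{r-1}\dots\sigma_{r-q+1})^{p-r}\beta^{\text{rev}}(\sigma_1\dots\sigma_{r-1})^q.$$
Reversing this expression, with $(\sigma_1\dots\sigma_{r-1})^{\text{rev}}=\sigma_{r-1}\dots\sigma_1$ and $(\sigma_{r-1}\dots\sigma_{r-q+1})^{\text{rev}}=\sigma_{r-q+1}\dots\sigma_{r-1}$, produces
$$(\sigma_{r-1}\dots\sigma_1)^q\,\beta\,(\sigma_{r-q+1}\dots\sigma_{r-1})^{p-r},$$
and one final cyclic conjugation yields the target $\beta(\sigma_{r-q+1}\dots\sigma_{r-1})^{p-r}(\sigma_{r-1}\dots\sigma_1)^q$.

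The only genuinely non-routine point is justifying that word reversal preserves the closure link. Thinking of the braid as living in a slab $[0,1]^2\times[0,1]$ with the last coordinate representing time, the involution $(x,y,t)\mapsto(x,y,1-t)$ sends the braid to the one spelled by the reverse word, without altering the depth coordinate $y$, so positive crossings remain positive; this involution also carries the top and bottom closure arcs to each other and leaves the resulting link unchanged. Granting this, the remainder of the argument is pure algebraic bookkeeping: two reversals and two cyclic conjugations sandwiching a single application of Proposition~\ref{isopoty1}. A minor alternative, should one prefer a direct proof parallel to that of Proposition~\ref{isopoty1}, would be to identify in $\beta(\sigma_{p-1}\dots\sigma_1)^q$ the arc running from top position $q$ around the closure to bottom position $q$ and then up through the sub-braid to top position $p$, and to push-and-shrink it; the combinatorics of the crossings this arc picks up matches the factor $(\sigma_{r-q+1}\dots\sigma_{r-1})$ placed between $\beta$ and the reduced sub-braid, but the reversal approach avoids the need to track over/under crossings individually.
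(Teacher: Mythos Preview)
Your reduction to Proposition~\ref{isopoty1} via word reversal is a genuinely different route from the paper's proof, which instead reruns the geometric shrinking argument directly, now tracking an \emph{over}-strand (rather than the under-strand of Proposition~\ref{isopoty1}) and iterating $p-r$ times. Your approach is more economical once the reversal fact is in hand; the paper's has the virtue of exhibiting the isotopy explicitly.

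That said, your justification of the reversal fact is not correct as written. The involution $(x,y,t)\mapsto(x,y,1-t)$ is orientation-\emph{reversing} on the ambient $3$-space, so it carries the closed braid to its mirror image, which is $\widehat{w^{-1}}$, not $\widehat{w^{\mathrm{rev}}}$. Concretely, although the depth coordinate $y$ is preserved, reversing $t$ reverses the direction of travel along every strand: the arc that ran from position $i$ to position $i+1$ now runs from $i+1$ to $i$, and with the over/under unchanged this converts $\sigma_i$ into $\sigma_i^{-1}$. Reading the image from $t=0$ to $t=1$ therefore yields $\sigma_{i_k}^{-1}\cdots\sigma_{i_1}^{-1}=w^{-1}$, not $w^{\mathrm{rev}}$. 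The fix is simple: use $(x,y,t)\mapsto(x,-y,1-t)$ instead. This map is orientation-preserving, and the extra $y$-flip swaps over and under at each crossing, converting each $\sigma_i^{-1}$ back to $\sigma_i$; the net effect is exactly $w\mapsto w^{\mathrm{rev}}$ on the level of braid words, and the closure arcs are carried to closure arcs. With this correction your argument is complete and the algebraic bookkeeping you give (two reversals, two cyclic shifts, one application of Proposition~\ref{isopoty1}) is accurate.
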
 

\begin{proof}
The $(r+1)$-st strand at the top of the braid anticlockwise follows around the braid closure to be at the $(r+1)$-st strand at the bottom of the braid. It then passes above the sub-braid $(\sigma_{p-1}\dots \sigma_{1})^{q}$ until it ends at the $(r+1-q)$-st strand of the sub-braid $(\sigma_{p-1}\dots \sigma_{1})^{q}$ because $q\leq r$.
Hence the $(r+1)$-st strand of the braid $\beta(\sigma_{p-1}\dots \sigma_{1})^{q}$ is connected to the $(r+1-q)$-st strand of the sub-braid $(\sigma_{p-1}\dots \sigma_{1})^{q}$ by an over strand that goes one time around the braid closure, as illustrated by the red strand in the second drawing of Figure~\ref{Isopoty2}.  
We push  up and shrink this over strand so that we reduce one strand of $\beta(\sigma_{p-1}\dots \sigma_{1})^{q}$, as shown in Figure~\ref{Isopoty2}. After that, this strand becomes an over strand between the $(r-q+1)$-st strand and the $(r)
$-st strand, as shown by the third drawing in Figure~\ref{Isopoty2}, producing the braid 
$$\beta(\sigma_{r-q+1}\dots \sigma_{r-1})(\sigma_{p-2}\dots \sigma_{1})^{q}.$$

The sub-braid $\beta(\sigma_{r-q+1}\dots \sigma_{r-1})$ of the last braid is still a braid on the leftmost $r$ strands, hence the $(r+1)$-st strand is still connected to the $(r+1-q)$-st strand by an over strand that goes one time around the braid closure.
So, we apply the last isotopy again to obtain the braid
$$\beta(\sigma_{r-q+1}\dots \sigma_{r-1})^2(\sigma_{p-3}\dots \sigma_{1})^{q}.$$
\begin{figure}
\includegraphics[scale=0.25]{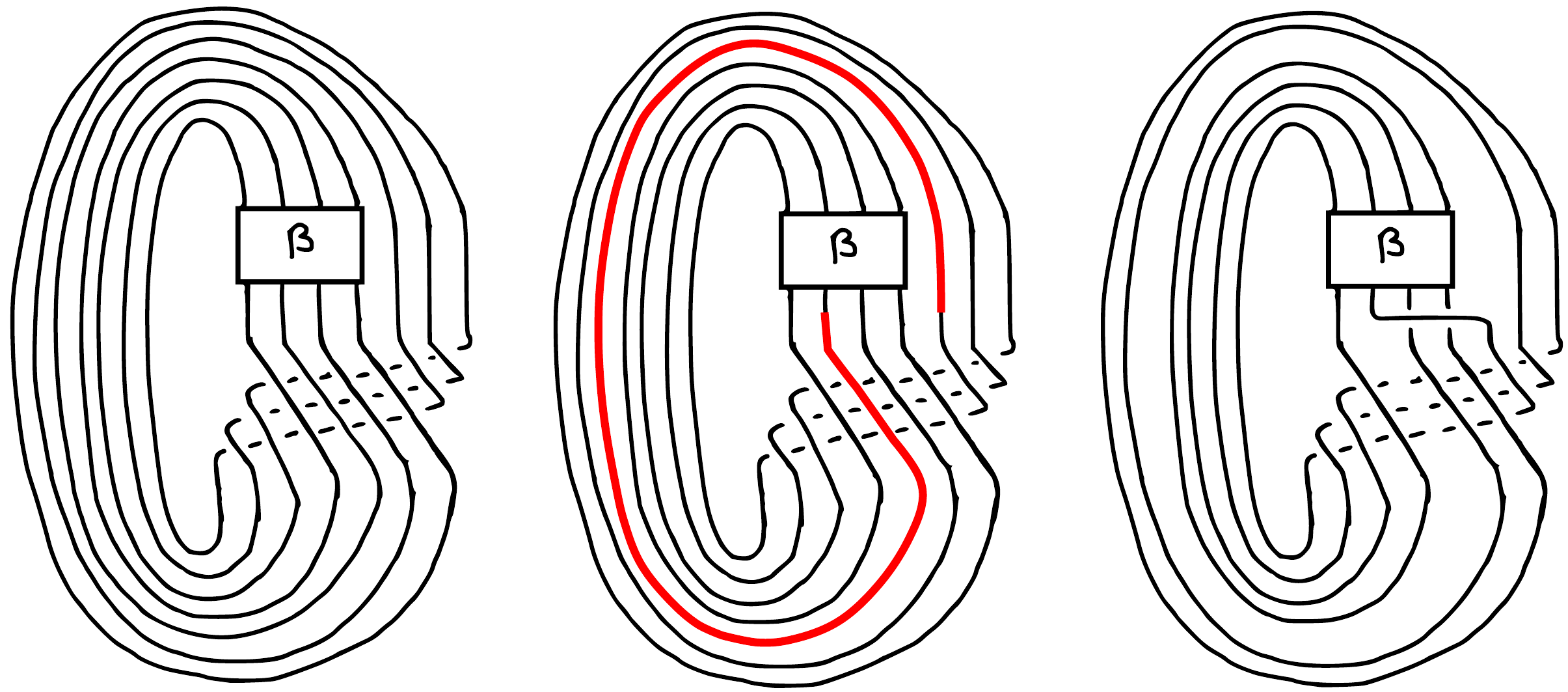} 
\caption{The third link, which is equivalent to the first link, is obtained by shrinking the red strand of the second link.}
\label{Isopoty2}
\end{figure}

We keep applying this isotopy for $p-r$ times in total until we obtain the braid
$$\beta(\sigma_{r-q+1}\dots \sigma_{r-1})^{p-r}(\sigma_{r-1}\dots \sigma_{1})^{q},$$
as we want.
\end{proof}

\begin{proposition}\label{isopoty3}
Consider $\beta$ a braid with $r$ strands and $p, q$ integers such that $1<r\leq q <p$. Then, there is an isotopy that takes the closure of the braid $\beta(\sigma_{1}\dots \sigma_{p-1})^{q}$
to the braid
$$(\sigma_{q-1}\dots \sigma_{1})^{p-q}\beta(\sigma_{1}\dots \sigma_{q-1})^{q}.$$
\end{proposition}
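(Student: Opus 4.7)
The plan is to deduce Proposition~\ref{isopoty3} as a direct corollary of Proposition~\ref{isopoty1}, after reinterpreting the number of strands of $\beta$. The key observation is that, since $r \leq q$, the braid $\beta$, originally presented on $r$ strands, may equally be regarded as a braid on $q$ strands via the standard inclusion $B_{r} \hookrightarrow B_{q}$ that adjoins $q-r$ trivial straight strands on the right. This reinterpretation does not alter the element of $B_{p}$ represented by $\beta(\sigma_{1}\dots\sigma_{p-1})^{q}$, nor its closure, because the adjoined strands merely go straight down through the $\beta$-block without introducing any crossings, and the sub-braid $(\sigma_{1}\dots\sigma_{p-1})^{q}$ is already on all $p$ strands.

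With $\beta$ now viewed as a $q$-strand braid, Proposition~\ref{isopoty1} applies verbatim after the substitution in which its parameter $r$ is replaced by $q$ (the exponent $q$ and the width $p$ are unchanged). The hypothesis $0 < q \leq r < p$ of that proposition becomes $0 < q \leq q < p$, which holds since $q < p$ by assumption. Moreover $q \geq r > 1$ rules out the degenerate case $q = 1$, so we land in the generic branch of Proposition~\ref{isopoty1}. Its conclusion then produces an isotopy, taking place in the complement of the braid axis, from the closure of $\beta(\sigma_{1}\dots\sigma_{p-1})^{q}$ to
$$(\sigma_{q-1}\sigma_{q-2}\dots\sigma_{q-q+1})^{p-q}\,\beta\,(\sigma_{1}\sigma_{2}\dots\sigma_{q-1})^{q}.$$
Since $q - q + 1 = 1$, this is exactly $(\sigma_{q-1}\dots\sigma_{1})^{p-q}\,\beta\,(\sigma_{1}\dots\sigma_{q-1})^{q}$, the desired braid.

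There is no real obstacle beyond the initial reinterpretation of the number of strands of $\beta$. As an alternative, one could redo the strand-tracking argument of Proposition~\ref{isopoty1} directly: start from the first strand at the top of the braid, follow it anticlockwise around the closure to reach bottom position~$1$, then pass up through $(\sigma_{1}\dots\sigma_{p-1})^{q}$ to emerge at the top of this sub-braid at position $q+1$ (which lies strictly to the right of the support of $\beta$ because $r \leq q$), and shrink the resulting under-strand to reduce the width of the braid by one, producing a factor $\sigma_{q-1}\dots\sigma_{1}$ on the left. Iterating this procedure $p-q$ times yields the stated form, and is the geometric realization of the reindexing used above.
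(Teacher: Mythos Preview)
Your proof is correct. Your primary argument---reinterpreting $\beta$ as a $q$-strand braid via the inclusion $B_r\hookrightarrow B_q$ and then invoking Proposition~\ref{isopoty1} with its parameter $r$ set equal to $q$---is a genuine shortcut that the paper does not take. The paper instead carries out the strand-tracking argument from scratch: it follows the first strand anticlockwise around the closure to position $q+1$, shrinks the resulting under-strand to produce one factor $(\sigma_{q-1}\dots\sigma_1)$, and iterates $p-q$ times. This is precisely the alternative you sketch at the end. Your reduction is cleaner and makes the relationship between Propositions~\ref{isopoty1} and~\ref{isopoty3} transparent; the paper's direct approach has the minor advantage of being self-contained and of making the geometric picture (Figure~\ref{Isopoty3}) explicit, but logically it duplicates work already done in Proposition~\ref{isopoty1}.
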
 
\begin{proof}
We start with the first strand at the top of the braid.
This strand anticlockwise goes around the braid closure to pass through the sub-braid $(\sigma_{1}\dots \sigma_{p-1})^{q}$ and ends at the $(q+1)$-st strand at the top of the braid because $r\leq q$.
We conclude that the $(1)$-st strand and the $(q+1)$-st strand are connected by an under strand that goes once around the braid closure, as illustrated by the red strand in the second drawing of Figure~\ref{Isopoty3}.
This under strand can be pushed down and  shrunk to reduce one strand of the last braid, as shown in the third drawing of Figure~\ref{Isopoty3}, so that this strand becomes an under strand between the $(1)$-st strand and the $(q)$-st strand yielding the braid 
$$(\sigma_{q-1}\dots \sigma_{1})\beta(\sigma_{1}\dots \sigma_{p-2})^{q}.$$
\begin{figure}
\includegraphics[scale=0.25]{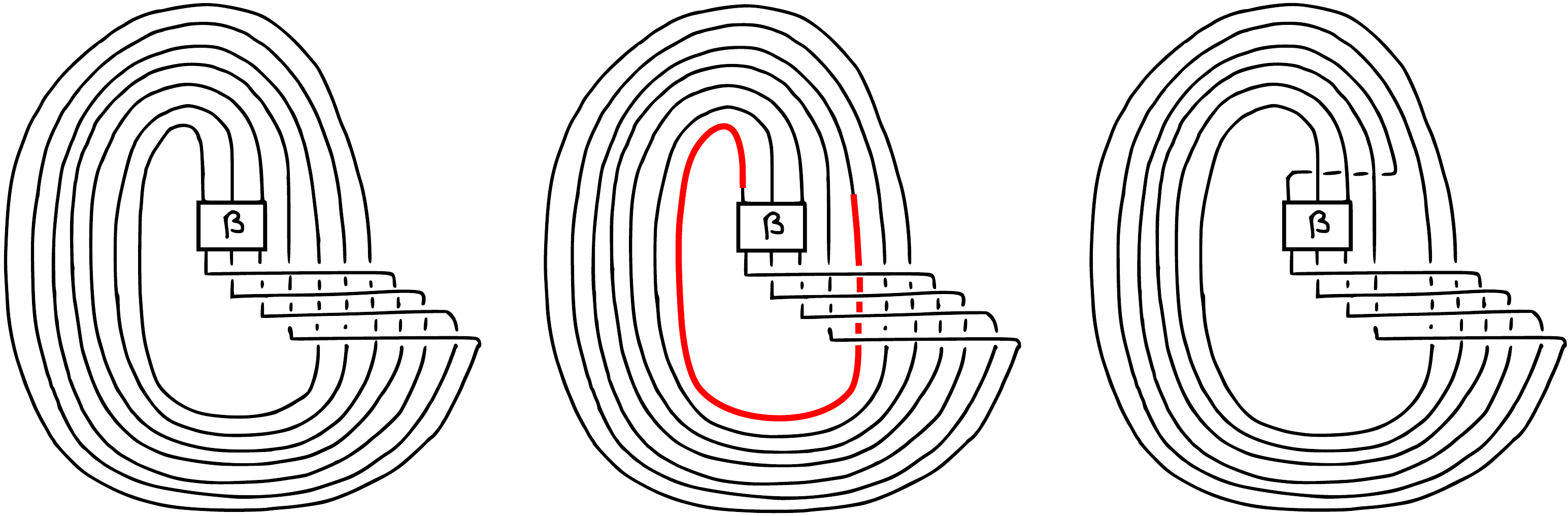} 
\caption{The third link is obtained from the first link by applying the isotopy of shrinking the red strand in the second link.}
\label{Isopoty3}
\end{figure}
The sub-braid $(\sigma_{q-1}\dots \sigma_{1})\beta$ of the last braid is a braid on the leftmost $q$ strands, we similarly conclude  that the $(1)$-st strand is connected to the $(q+1)$-st strand by an under strand that goes once around the braid closure.
We can apply the last isotopy again to obtain the braid
$$(\sigma_{q-1}\dots \sigma_{1})^2\beta(\sigma_{1}\dots \sigma_{p-3})^{q}.$$

We keep applying this isotopy until we apply it $p-q$ times in total to obtain the braid
$$(\sigma_{q-1}\dots \sigma_{1})^{p-q}\beta(\sigma_{1}\dots \sigma_{q-1})^{q}. \qedhere$$
\end{proof} 

\begin{proposition}\label{isopoty3'}
Consider $\beta$ a braid with $r$ strands and $p, q$ integers such that $1<r\leq q <p$. Then, there is an isotopy that takes the closure of the braid $\beta(\sigma_{1}\dots \sigma_{p-1})^{q}$
to the braid $$\beta(\sigma_{q-1}\dots \sigma_{1})^{p}.$$
\end{proposition}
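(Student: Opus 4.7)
The plan is to derive this from Proposition~\ref{isopoty3} together with the classical identity expressing the full twist in two different ways. The hypothesis $1<r\le q<p$ in the statement is identical to that of Proposition~\ref{isopoty3}, so my first step would be to apply that proposition verbatim: the closure of $\beta(\sigma_1\cdots\sigma_{p-1})^q$ is isotopic to the closure of
\[ (\sigma_{q-1}\cdots\sigma_1)^{p-q}\,\beta\,(\sigma_1\cdots\sigma_{q-1})^q. \]
This already places all of the action on the leftmost $q$ strands, which is the feature I want to exploit.

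Next, I would invoke cyclic invariance of the braid closure. Since $r\le q$, the braid $\beta$ lies in $B_r\subseteq B_q\subseteq B_p$, so every factor above belongs to $B_p$, and I may cyclically move the leading block to the end. The closure of the word above therefore equals the closure of
\[ \beta\,(\sigma_1\cdots\sigma_{q-1})^q\,(\sigma_{q-1}\cdots\sigma_1)^{p-q}. \]

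Now I would apply the standard identity $(\sigma_1\cdots\sigma_{q-1})^q=(\sigma_{q-1}\cdots\sigma_1)^q$ in $B_q$: both words represent the central full twist $\Delta_q^2$ on $q$ strands, and either a direct verification (grouping as $\Delta_q\cdot\Delta_q$ using $\Delta_q=\sigma_1(\sigma_2\sigma_1)\cdots(\sigma_{q-1}\cdots\sigma_1)$) or the ``flip'' automorphism $\sigma_i\mapsto\sigma_{q-i}$ of $B_q$ yields the equality. Substituting, the braid becomes
\[ \beta\,(\sigma_{q-1}\cdots\sigma_1)^q\,(\sigma_{q-1}\cdots\sigma_1)^{p-q}=\beta\,(\sigma_{q-1}\cdots\sigma_1)^p, \]
which is exactly the target braid.

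The main substantive ingredient is really Proposition~\ref{isopoty3}, which the user has just proved; the remaining work is algebraic and short. The only place where one must be careful is the central identity reversing the order of the generators in the $q$-th power, so I would state it explicitly (with a one-line justification via $\Delta_q^2$) rather than leave it implicit, since this is what reverses $\sigma_1\cdots\sigma_{q-1}$ into $\sigma_{q-1}\cdots\sigma_1$ without changing the element. Apart from this, no new geometric argument is required beyond Proposition~\ref{isopoty3} and a Markov conjugation move.
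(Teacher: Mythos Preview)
Your proof is correct and follows essentially the same approach as the paper: apply Proposition~\ref{isopoty3}, use the identity $(\sigma_1\cdots\sigma_{q-1})^q=(\sigma_{q-1}\cdots\sigma_1)^q$, and cyclically permute the $(\sigma_{q-1}\cdots\sigma_1)^{p-q}$ block around the closure. The only cosmetic difference is the order in which the last two steps are performed, and you supply a justification for the full-twist identity that the paper simply asserts.
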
 
\begin{proof}
From Proposition~\ref{isopoty3}, there is an isotopy that takes the closure of the braid $\beta(\sigma_{1}\dots \sigma_{p-1})^{q}$ to the braid 
$(\sigma_{q-1}\dots \sigma_{1})^{p-q}\beta(\sigma_{1}\dots \sigma_{q-1})^{q}.$
We note that $(\sigma_{1}\dots \sigma_{q-1})^{q} = (\sigma_{q-1}\dots \sigma_{1})^{q}$. Finally we push the sub-braid $(\sigma_{q-1}\dots \sigma_{1})^{p-q}$ anticlockwise around the braid closure to obtain the braid $\beta(\sigma_{q-1}\dots \sigma_{1})^{p},$ as we want.
\end{proof} 

\begin{proposition}\label{isopoty4}
Consider $\beta$ a braid with $r$ strands and $p, q$ integers such that $1<r\leq q <p$. Then, there is an isotopy that takes the closure of the braid $\beta(\sigma_{p-1}\dots \sigma_{1})^{q}$
to the braid
$$\beta(\sigma_{1}\dots \sigma_{q-1})^{p-q}(\sigma_{q-1}\dots \sigma_{1})^{q}.$$
\end{proposition}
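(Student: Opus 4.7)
The plan is to prove this isotopy directly, by tracing a specific over-arc in the closure of $\beta(\sigma_{p-1}\dots\sigma_1)^q$ and shrinking it, in close analogy with the proof of Proposition~\ref{isopoty3} (which shrinks an under-arc). The switch from under-arc to over-arc is forced by the fact that $(\sigma_{p-1}\dots\sigma_1)^q$ is the word-reverse of $(\sigma_1\dots\sigma_{p-1})^q$, so the natural ``travelling'' strand in the sub-braid now moves from left to right rather than from right to left.

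First I would identify the arc. Consider the strand that enters the sub-braid $(\sigma_{p-1}\dots\sigma_1)^q$ at position $1$ at its top. In each copy of $(\sigma_{p-1}\dots\sigma_1)$ this strand meets exactly one crossing, namely the generator $\sigma_i$ when it is at position $i$, and in every such encounter it crosses \emph{over} and advances to position $i+1$. Hence after all $q$ copies it exits at position $q+1$ at the bottom of the sub-braid, having gone strictly over each crossing along the way. Following the braid closure anticlockwise from position $q+1$ at the bottom back up to position $q+1$ at the top of the braid, and then down through $\beta$ trivially (permissible because $q+1>r$), one returns to position $q+1$ at the top of the sub-braid. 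The resulting sub-arc of the link has endpoints at positions $1$ and $q+1$ along the bottom of $\beta$, is a pure over-arc inside the sub-braid, and does not interact with $\beta$.

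Next I would shrink this arc by pushing it upward into the region between $\beta$ and the sub-braid, by exactly the same mechanism used in Propositions~\ref{isopoty1}--\ref{isopoty3}. The shrinking removes the trivial strand that previously passed through $\beta$ at position $q+1$, reduces $(\sigma_{p-1}\dots\sigma_1)^q$ on $p$ strands to $(\sigma_{p-2}\dots\sigma_1)^q$ on $p-1$ strands, and in its place leaves a short over-arc sitting between $\beta$ and the new sub-braid which corresponds to the word $(\sigma_1\dots\sigma_{q-1})$ (an over-strand moving from position $1$ to position $q$ in the new numbering). Thus one iteration takes the closure of $\beta(\sigma_{p-1}\dots\sigma_1)^q$ to the closure of $\beta(\sigma_1\dots\sigma_{q-1})(\sigma_{p-2}\dots\sigma_1)^q$.

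Finally I would iterate. After $k$ applications, the braid takes the form $\beta(\sigma_1\dots\sigma_{q-1})^k(\sigma_{p-k-1}\dots\sigma_1)^q$ on $p-k$ strands, and the same shrinking applies to the remaining sub-braid so long as $q<p-k$, i.e.\ so long as $k<p-q$. After exactly $p-q$ iterations the sub-braid has been reduced to $(\sigma_{q-1}\dots\sigma_1)^q$ on $q$ strands, producing $\beta(\sigma_1\dots\sigma_{q-1})^{p-q}(\sigma_{q-1}\dots\sigma_1)^q$ as required. The main verification required is that the arc identified in the first step is genuinely a pure over-arc permitting the shrinking; this is the direct over-side analogue of the trace of strand $1$ through $(\sigma_1\dots\sigma_{p-1})^q$ used in Proposition~\ref{isopoty3}, with the only difference being which side one pushes the arc to (downward there, upward here).
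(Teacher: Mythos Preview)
Your proof is correct and follows essentially the same approach as the paper's own proof. The only cosmetic difference is the direction in which you trace the over-arc: you start at position $1$ at the top of the sub-braid and run downward to position $q+1$, whereas the paper starts at the $(q+1)$-st strand at the top of the full braid, goes anticlockwise around the closure, and then runs upward through the sub-braid to position $1$; these describe the same arc, and the shrinking step and the $(p-q)$-fold iteration are identical.
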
 

\begin{proof}
Consider the $(q+1)$-st strand at the top of the braid $\beta(\sigma_{p-1}\dots \sigma_{1})^{q}$.
This strand anticlockwise goes around the braid closure to be at the $(q+1)$-st strand at the bottom of the braid
and then it passes through the sub-braid $(\sigma_{p-1}\dots \sigma_{1})^{q}$ to be at the $(1)$-st strand at the top of the sub-braid $(\sigma_{p-1}\dots \sigma_{1})^{q}$ as $r\leq q$.
We conclude that the $(q+1$)-st strand is connected to the $(1)$-st strand by an over strand that goes one time around the braid closure, as illustrated by the red strand in Figure~\ref{Isopoty4}.  
This over strand can be pushed up and shrunk to reduce one strand from the last braid, as shown in Figure~\ref{Isopoty4}, so that this over strand becomes an over strand between the $(1)$-st strand and the $(q)$-st strand producing  the braid 
$$\beta(\sigma_{1}\dots \sigma_{q-1})(\sigma_{p-2}\dots \sigma_{1})^{q}.$$

\begin{figure}
\includegraphics[scale=0.25]{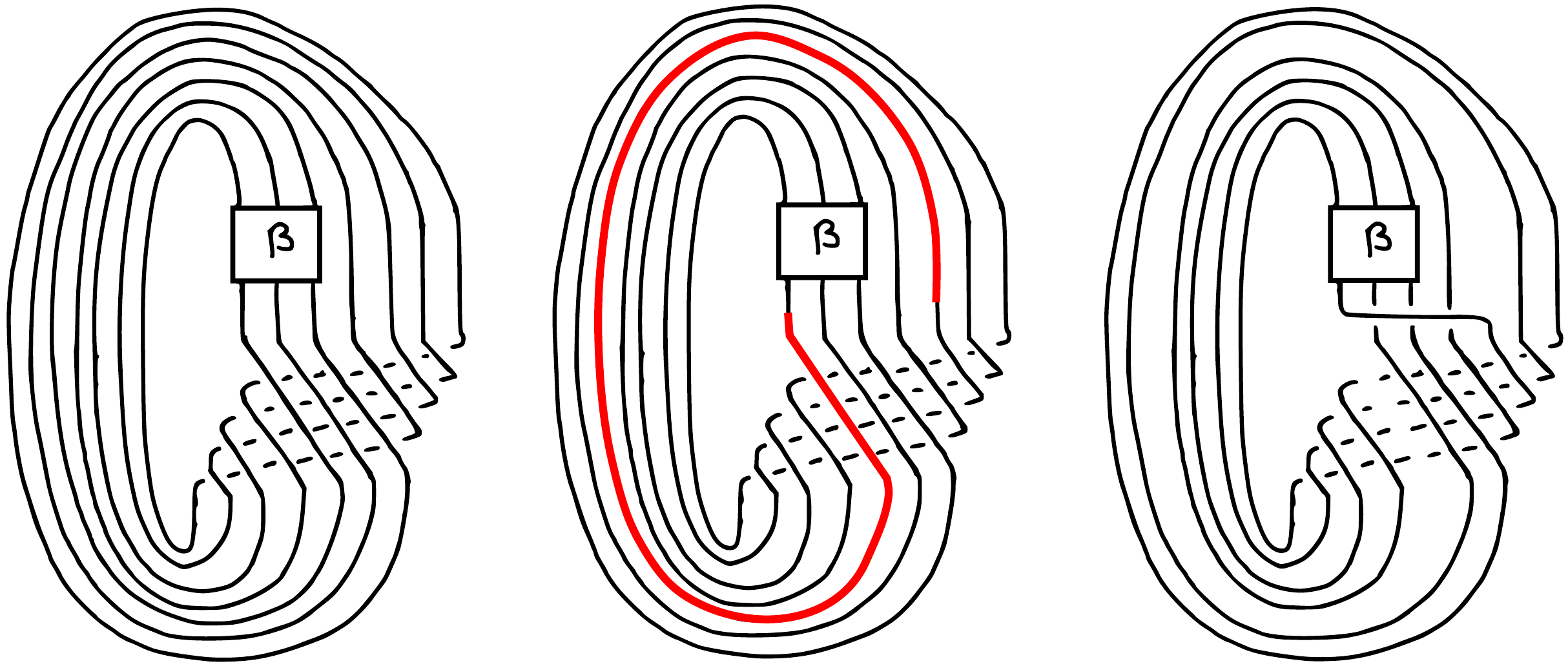} 
\caption{The red strand in the second link is pushed up and shrunk to yield the third link.}
\label{Isopoty4}
\end{figure}

Likewise, the $(q+1)$-st strand at the top of the last braid is still connected to the $(1)$-st strand at the bottom of the sub-braid $\beta(\sigma_{1}\dots \sigma_{q-1})$ by an over strand that goes one time around the braid closure because the sub-braid $\beta(\sigma_{1}\dots \sigma_{q-1})$ is still a braid on the leftmost $q$ strands. 
Hence we can apply the last isotopy again to obtain the braid
$$\beta(\sigma_{1}\dots \sigma_{q-1})^2(\sigma_{p-3}\dots \sigma_{1})^{q}.$$

We continue applying this isotopy until we apply it $p-q$ times in total to obtain the braid
$$\beta(\sigma_{1}\dots \sigma_{q-1})^{p-q}(\sigma_{q-1}\dots \sigma_{1})^{q}. \qedhere$$
\end{proof}

\section{V-links and Minimal braids of T-links}\label{section3}

In this section we prove that the minimal braids of the T-links are the same as a family of braids, called  V-braids. To prove this, we follow the idea of the proof of \cite[Theorem 3.5]{Lorenzknots}, but now we simplify the proof using some different isotopies from section 1. Moreover, if we follow the proof of \cite[Theorem 3.5]{Lorenzknots}, we end up with two families of braids, but we realize that they are the same family. This is illustrated in Lemma~\ref{otherbraid} below.

\begin{definition}
Let \(p,q\) be positive integers with \(2\le p\le q\). Let
\[
2\le u_1<\cdots<u_m\le p
\quad\text{and}\quad
2\le r_1<\cdots<r_n<p
\]
be possibly empty sequences, and let
\[
v_1,\dots,v_m,s_1,\dots,s_n
\]
be positive integers. The corresponding \(V\)-braid is the braid on \(p\)
strands given by
\begin{align*}
&(\sigma_{p-1}\sigma_{p-2}\dots\sigma_{{p-u_1+1}})^{v_1}\dots(\sigma_{p-1}\sigma_{p-2}\dots\sigma_{{p-u_m+1}})^{v_m}(\sigma_1\sigma_2\dots\sigma_{r_1-1})^{s_1}\dots(\sigma_{1}\sigma_{2}\dots\sigma_{r_n-1})^{s_n}\\
&(\sigma_{1}\sigma_{2}\dots\sigma_{{p-1}})^{q}.
\end{align*}
where the corresponding product is omitted if one of the sequences is empty.
Its closure is called the \(V\)-link
\[
V((u_1,\overline{v_1}),\dots,(u_m,\overline{v_m}),
(r_1,s_1),\dots,(r_n,s_n),(p,q)).
\]
\end{definition}

We call this family of braids V-links because to obtain them we start with the braid $(\sigma_{1}\sigma_{2}\dots\sigma_{{p-1}})^{q}$ that represents the torus link $T(p, q)$ and add sub-braids independently on both sides of this braid, left and right, in increasing order referring to the shape of the letter V.

\begin{lemma}\label{otherbraid}
Let $r_1, \dots, r_n, s_1, \dots, s_n, u_1, \dots, u_m, v_1, \dots, v_m, p, q$ be positive integers such that $2\leq r_1< \dots < r_{n} < p$, $2\leq u_1< \dots < u_{m} \leq p\leq q$. Then, the V-link  
$$V_1 = V((u_1,\overline{v_1}), \dots, (u_{m},\overline{v_{m}}), (r_1,s_1), \dots, (r_{n},s_{n}), (p, q))$$ 
is also equivalent to the closure of the braid
\begin{align*}
&B = (\sigma_1\sigma_2\dots\sigma_{u_1-1})^{v_1}\dots(\sigma_1\sigma_2\dots\sigma_{u_m-1})^{v_m}(\sigma_{p-1}\sigma_{p-2}\dots\sigma_{{p-r_1+1}})^{s_1}\dots\\
&(\sigma_{p-1}\sigma_{p-2}\dots\sigma_{{p-r_n+1}})^{s_n}(\sigma_{p-1}\sigma_{p-2}\dots\sigma_{{1}})^{q}.
\end{align*}
\end{lemma}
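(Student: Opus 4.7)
The plan is to exploit the left-right mirror symmetry of the braid diagram. Rotating a braid through $180^\circ$ about a vertical axis through its center is a rigid motion of $\mathbb{R}^{3}$ that sends the strand in position $i$ to the strand in position $p+1-i$ while fixing the top-to-bottom direction. This motion carries a positive crossing $\sigma_i$ (a swap of strands $i$ and $i+1$) to a positive crossing between strands $p-i$ and $p-i+1$, namely $\sigma_{p-i}$, and preserves the order in which crossings are read. Hence it induces a group automorphism $\rho\from B_p \to B_p$ with $\rho(\sigma_i) = \sigma_{p-i}$ (the braid relations are manifestly preserved under this substitution, since the triple relation at index $i$ becomes the triple relation at index $p-i-1$, and far-commutation is preserved). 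Because $\rho$ is realized by an ambient isotopy of $\mathbb{R}^{3}$, the closures of $w$ and $\rho(w)$ are equivalent links for every $w \in B_p$.

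I will then apply $\rho$ factor by factor to the defining word of $V_1$. Each left factor $(\sigma_{p-1}\sigma_{p-2}\dots\sigma_{p-u_i+1})^{v_i}$ is sent to $(\sigma_1\sigma_2\dots\sigma_{u_i-1})^{v_i}$; each right factor $(\sigma_1\sigma_2\dots\sigma_{r_i-1})^{s_i}$ is sent to $(\sigma_{p-1}\sigma_{p-2}\dots\sigma_{p-r_i+1})^{s_i}$; and the terminal torus factor $(\sigma_1\sigma_2\dots\sigma_{p-1})^{q}$ is sent to $(\sigma_{p-1}\sigma_{p-2}\dots\sigma_1)^{q}$. Concatenating these images in the same order in which the original factors appear produces precisely the braid word $B$ from the statement, so the closures of $V_1$ and $B$ coincide as links in $S^{3}$.

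There is essentially no technical obstacle here; the argument is a symmetry observation parallel in spirit to, but not requiring, the more intricate isotopies developed in Section~2. The only points requiring care are confirming that the vertical-axis rotation preserves both crossing positivity and word order (both of which follow from its fixing the top-to-bottom direction of the diagram), and bookkeeping how each of the three types of factors in $V_1$ transforms under $\sigma_i \mapsto \sigma_{p-i}$.
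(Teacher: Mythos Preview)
Your proof is correct and is essentially the same argument as the paper's: both exploit the braid-group automorphism $\rho\colon\sigma_i\mapsto\sigma_{p-i}$, realized by a $180^\circ$ rotation of the braid in $\mathbb{R}^3$, to carry the defining word of $V_1$ to $B$ (the paper phrases the rotation as ``along the horizontal line'' and views the back of the diagram, while you rotate about a vertical axis, but the induced map on generators and hence on closures is identical). Your write-up is, if anything, a touch more explicit in verifying that $\rho$ preserves crossing sign and word order and is induced by an orientation-preserving ambient isotopy.
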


\begin{proof}
Consider the braid $B$ as above.
We rotate the projection plane of $B$ by $180^{\circ}$ degrees along the horizontal line as illustrated in Figure~\ref{V-link}.
This isotopy changes the position in which we see this braid.  After this isotopy, we see the back of $B$. Therefore, $B$ becomes the braid
\begin{align*}
&(\sigma_{p-1}\sigma_{p-2}\dots\sigma_{{p-u_1+1}})^{v_1}\dots(\sigma_{p-1}\sigma_{p-2}\dots\sigma_{{p-u_m+1}})^{v_m}(\sigma_1\sigma_2\dots\sigma_{r_1-1})^{s_1}\dots(\sigma_{1}\sigma_{2}\dots\sigma_{r_n-1})^{s_n}\\
&(\sigma_{1}\sigma_{2}\dots\sigma_{{p-1}})^{q},
\end{align*}
which represents the V-link $V_1$.
\end{proof}

Therefore, we conclude that the V-link $V((u_1,\overline{v_1}), \dots, (u_{m},\overline{v_{m}}), (r_1,s_1), \dots, (r_{n},s_{n}), (p, q))$ is represented by either the braid
$$(\sigma_{p-1}\dots\sigma_{{p-u_1+1}})^{v_1}\dots(\sigma_{p-1}\dots\sigma_{{p-u_m+1}})^{v_m}(\sigma_1\dots\sigma_{r_1-1})^{s_1}\dots(\sigma_{1}\dots\sigma_{r_n-1})^{s_n}(\sigma_{1}\dots\sigma_{{p-1}})^{q}$$
or the braid
$$(\sigma_1\dots\sigma_{u_1-1})^{v_1}\dots(\sigma_1\dots\sigma_{u_m-1})^{v_m}(\sigma_{p-1}\dots\sigma_{{p-r_1+1}})^{s_1}\dots(\sigma_{p-1}\dots\sigma_{{p-r_n+1}})^{s_n}(\sigma_{p-1}\dots\sigma_{{1}})^{q}.$$

Figure~\ref{V-link} illustrates an example of these two types of braids that represent the same V-link.

\begin{figure}
\includegraphics[scale=0.35]{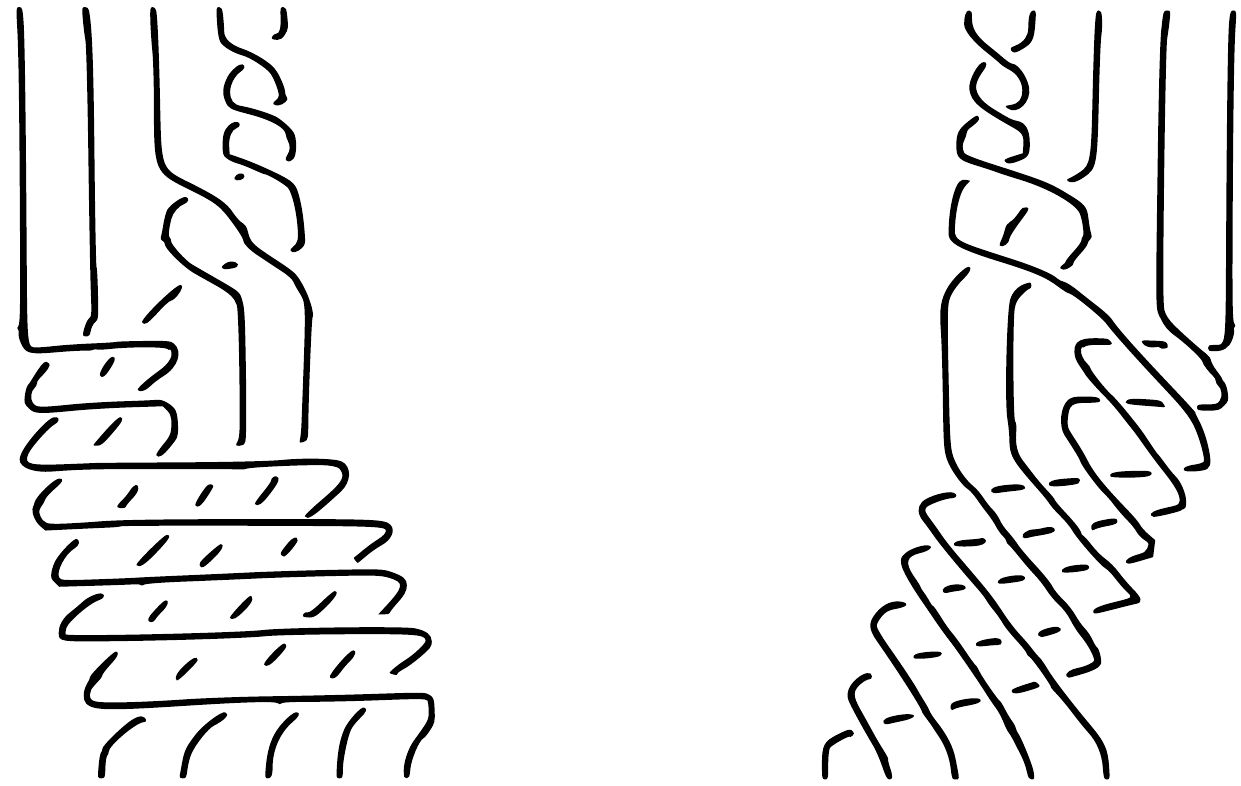} 
\caption{We have the V-link $V((2, \overline{2}), (3, \overline{2}), (3, 2), (5,5))$ in the first drawing. The second drawing illustrates the braid $(\sigma_1)^{2}(\sigma_{1}\sigma_{2})^{2}(\sigma_4\sigma_{3})^{2}(\sigma_{4}\sigma_{3}\sigma_{2}\sigma_{{1}})^{5}$ which also represents the last V-link. To see this, we apply an isotopy that rotates the projection plane of the first braid by $180 ^{\circ}$ degrees along the horizontal line. After that, we obtain the second braid.}
\label{V-link}
\end{figure}

\begin{lemma}\label{minimal}
V-link braids are minimal braids.
\end{lemma}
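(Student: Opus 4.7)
The plan is to reduce the statement to the Franks--Williams criterion \cite[Corollary 2.4]{Franks}, which guarantees that a positive braid containing a full twist on all of its strands realizes the braid index of its closure. This is the same tool the introduction invokes to say that the positive braids with a full twist produced by Birman--Williams are minimal.

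First I would simply read off from the definition that the V-link braid $V((u_1,\overline{v_1}),\dots,(u_m,\overline{v_m}),(r_1,s_1),\dots,(r_n,s_n),(p,q))$ is a braid on $p$ strands: every generator $\sigma_j$ appearing in the word has index in the range $1\le j\le p-1$, with $p-1$ actually attained in the rightmost factor $(\sigma_1\sigma_2\dots\sigma_{p-1})^q$, while the constraints $r_n<p$ and $u_m\le p$ force the other factors to lie within these $p$ strands. Second, it is manifestly a positive braid, since every letter in the word is a generator (not an inverse) and the exponents $s_i,v_j,q$ are positive.

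Third, I would use the hypothesis $q\ge p$ in the definition of V-links to split off a full twist from the last factor:
\[
(\sigma_1\sigma_2\dots\sigma_{p-1})^q
\;=\;(\sigma_1\sigma_2\dots\sigma_{p-1})^p\,(\sigma_1\sigma_2\dots\sigma_{p-1})^{q-p},
\]
and the word $(\sigma_1\sigma_2\dots\sigma_{p-1})^p$ is, by definition, a full twist on the $p$ strands of the braid. Consequently the V-link braid is a positive braid on $p$ strands that contains a full twist on all of those strands.

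Finally, applying \cite[Corollary 2.4]{Franks} to this braid gives that $p$ equals the braid index of its closure, so the V-link braid is a minimal braid, as claimed. I do not expect any genuine obstacle: the entire argument is a matter of verifying positivity, counting strands, and extracting a full twist, after which Franks--Williams does the work.
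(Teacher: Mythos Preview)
Your proof is correct and follows exactly the paper's approach: observe from the definition that a V-link braid is a positive braid on $p$ strands containing a full twist (since $q\ge p$), and then invoke \cite[Corollary 2.4]{Franks} to conclude minimality. The paper's own proof is the same argument stated more tersely.
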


\begin{proof}
V-link braids are positive braids with at least one positive full twist.  By Franks and Williams \cite[Corollary 2.4]{Franks}, a positive braid with at least one positive full twist reaches the braid index. Therefore, V-link braids are minimal braids.
\end{proof}

We denote by $K$ the non-trivial T-link $$T((r_1,s_1), (r_2,s_2), \dots, (r_{n-1},s_{n-1}), (r_n,s_n)).$$
We can consider that $s_n>1$ otherwise this T-link braid can be destabilized on its right to yield the T-link $T((r_1,s_1), (r_2,s_2), \dots, (r_{n-1},s_{n-1}+1)).$ So we assume that $s_n>1$.

\begin{lemma}\label{case2}
If $s_n\geq r_n$, then $K$ is equivalent to the V-link 
$$V((r_1,s_1), (r_2,s_2), \dots, (r_{n-1},s_{n-1}), (r_n,s_n)).$$
\end{lemma}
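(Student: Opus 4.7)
The plan is to observe that under the hypothesis $s_n\geq r_n$, the T-link braid and the claimed V-link braid are literally identical once one unpacks the V-link definition with the correct identification of parameters. So no isotopy work is needed; the whole content is a matching of notation plus a verification of hypotheses.

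Concretely, first I would rewrite the closure of
$$K \;=\; T((r_1,s_1),(r_2,s_2),\dots,(r_{n-1},s_{n-1}),(r_n,s_n))$$
as the closure of the braid
$$(\sigma_1\sigma_2\dots\sigma_{r_1-1})^{s_1}(\sigma_1\sigma_2\dots\sigma_{r_2-1})^{s_2}\dots(\sigma_1\sigma_2\dots\sigma_{r_{n-1}-1})^{s_{n-1}}(\sigma_1\sigma_2\dots\sigma_{r_n-1})^{s_n}.$$
Then I would interpret the symbol $V((r_1,s_1),(r_2,s_2),\dots,(r_{n-1},s_{n-1}),(r_n,s_n))$ against the V-link definition by setting $m=0$ (no $u_i$/$v_i$ factors), reading the first $n-1$ pairs as the middle $(r_i,s_i)$ data of the V-link, and reading the final pair $(r_n,s_n)$ as the torus-link data $(p,q)$. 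Under this identification, the two conditions required by the V-link definition become $2\leq r_1<r_2<\dots<r_{n-1}<p=r_n$, which is exactly the strict T-link inequality, and $p\leq q$, which is $r_n\leq s_n$, which is precisely the hypothesis of the lemma.

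With these hypotheses verified, the V-link braid word collapses (the $v_i$-product is empty and the middle product runs from $1$ to $n-1$) to
$$(\sigma_1\sigma_2\dots\sigma_{r_1-1})^{s_1}\dots(\sigma_1\sigma_2\dots\sigma_{r_{n-1}-1})^{s_{n-1}}(\sigma_1\sigma_2\dots\sigma_{r_n-1})^{s_n},$$
which is the same braid word I wrote for $K$. Thus the two closures are not merely equivalent links but are represented by identical braids, so no further argument is required.

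There is essentially no obstacle in this proof — the subtlety, such as it is, lies only in being careful about the role of the trailing pair $(r_n,s_n)$, which must play the part of $(p,q)$ in the V-link definition; this is exactly what forces the hypothesis $s_n\geq r_n$ and also makes clear why the condition $s_n>1$ assumed just before the lemma (so that the T-link is genuinely non-degenerate) passes through without issue.
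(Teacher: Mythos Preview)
Your proposal is correct and matches the paper's approach exactly: the paper's proof is the single sentence ``This directly follows from the definitions of T-links and V-links,'' and you have simply spelled out that verification in detail. Your identification $m=0$, $(p,q)=(r_n,s_n)$ is the right reading, and the hypothesis $s_n\geq r_n$ is precisely what makes the V-link constraint $p\leq q$ hold.
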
 

\begin{proof}
This directly follows from the definitions of T-links and V-links.
\end{proof} 

\begin{lemma}\label{case1}
If $1<s_n< r_n$ and $s_n\geq r_{n-1}$, then $K$ is equivalent to the V-link
$$V((r_1,\overline{s_1}), \dots, (r_{n-1},\overline{s_{n-1}}), (s_n,r_n)).$$
\end{lemma}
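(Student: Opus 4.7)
The plan is to apply Proposition~\ref{isopoty1} to the final factor $(\sigma_1\dots\sigma_{r_n-1})^{s_n}$ of the T-link braid, then rearrange via conjugation in the closure and a full-twist identity to land on the alternative V-link braid given by Lemma~\ref{otherbraid}.

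First I would set $\beta_0 = (\sigma_1\dots\sigma_{r_1-1})^{s_1}\cdots(\sigma_1\dots\sigma_{r_{n-1}-1})^{s_{n-1}}$, so that $K$ is the closure of $\beta_0 (\sigma_1\dots\sigma_{r_n-1})^{s_n}$. Because $r_1<\dots<r_{n-1}\leq s_n$, the sub-braid $\beta_0$ only uses generators $\sigma_1,\dots,\sigma_{r_{n-1}-1}$ and can be viewed as a braid on $s_n$ strands. Now I would apply Proposition~\ref{isopoty1} with $\beta=\beta_0$, $r=s_n$, $p=r_n$, $q=s_n$: the hypotheses $0<q\leq r<p$ and $q>1$ reduce to $1<s_n<r_n$, which hold. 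The proposition then gives the isotopy
\begin{equation*}
\beta_0(\sigma_1\dots\sigma_{r_n-1})^{s_n}\ \longleftrightarrow\ (\sigma_{s_n-1}\dots\sigma_1)^{r_n-s_n}\,\beta_0\,(\sigma_1\dots\sigma_{s_n-1})^{s_n}
\end{equation*}
between their closures.

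Next I would use the standard identity that the full twist on $s_n$ strands has two symmetric expressions, namely $(\sigma_1\sigma_2\dots\sigma_{s_n-1})^{s_n}=(\sigma_{s_n-1}\dots\sigma_2\sigma_1)^{s_n}$, to rewrite the right-hand side as
\begin{equation*}
(\sigma_{s_n-1}\dots\sigma_1)^{r_n-s_n}\,\beta_0\,(\sigma_{s_n-1}\dots\sigma_1)^{s_n}.
\end{equation*}
Now I would conjugate in the closure, cycling the leftmost factor to the right: closures are invariant under such cyclic permutation, so this is equivalent to the closure of
\begin{equation*}
\beta_0\,(\sigma_{s_n-1}\dots\sigma_1)^{s_n}(\sigma_{s_n-1}\dots\sigma_1)^{r_n-s_n}=\beta_0\,(\sigma_{s_n-1}\dots\sigma_1)^{r_n}.
\end{equation*}

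Finally, I would invoke Lemma~\ref{otherbraid} with parameters $m=n-1$, $u_i=r_i$, $v_i=s_i$ for $i=1,\dots,n-1$, no $(r_i,s_i)$ terms, $p=s_n$ and $q=r_n$ (the inequalities $2\leq r_1<\dots<r_{n-1}\leq s_n\leq r_n$ are exactly what the V-link definition requires, using the hypotheses $r_{n-1}\leq s_n<r_n$): the braid $\beta_0(\sigma_{s_n-1}\dots\sigma_1)^{r_n}$ is precisely the alternate representative of $V((r_1,\overline{s_1}),\dots,(r_{n-1},\overline{s_{n-1}}),(s_n,r_n))$ produced in that lemma, completing the proof. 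No step is a real obstacle — the only subtlety is noticing that the hypothesis $s_n\geq r_{n-1}$ is used twice (to legitimately view $\beta_0$ on $s_n$ strands, and to satisfy the V-link definition), and that $1<s_n$ is needed to pick up the non-trivial conclusion of Proposition~\ref{isopoty1} rather than its $q=1$ degeneration.
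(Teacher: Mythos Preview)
Your proof is correct and follows essentially the same route as the paper. The only cosmetic difference is that the paper invokes Proposition~\ref{isopoty3'} directly (which already packages together the isotopy, the full-twist identity $(\sigma_1\dots\sigma_{s_n-1})^{s_n}=(\sigma_{s_n-1}\dots\sigma_1)^{s_n}$, and the cyclic conjugation), whereas you apply Proposition~\ref{isopoty1} at the borderline case $r=q=s_n$ and then carry out those two extra steps by hand; since Propositions~\ref{isopoty1} and~\ref{isopoty3} agree when $r=q$, the underlying isotopies are identical.
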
 

\begin{proof}
The T-link $K$ is given by the braid
$$(\sigma_{1}\dots\sigma_{r_{1} - 1})^{s_{1}} (\sigma_{1}\dots\sigma_{r_{2} - 1})^{s_{2}}\dots (\sigma_{1}\dots\sigma_{r_{n-1} - 1})^{s_{n-1}}(\sigma_{1}\dots\sigma_{r_{n} - 1})^{s_{n}}.$$

By Proposition~\ref{isopoty3'}, there is an isotopy that takes this braid to the following braid 
$$(\sigma_{1}\dots\sigma_{r_{1} - 1})^{s_{1}} (\sigma_{1}\dots\sigma_{r_{2} - 1})^{s_{2}}\dots (\sigma_{1}\dots\sigma_{r_{n-1} - 1})^{s_{n-1}}(\sigma_{s_n-1}\dots \sigma_{1})^{r_n},$$
which represents the V-link
$$V((r_1,\overline{s_1}), \dots, (r_{n-1},\overline{s_{n-1}}), (s_n,r_n))$$
by Lemma~\ref{otherbraid}.
\end{proof}

\begin{proposition}\label{propositionlastcases}
Suppose that $1<s_n < r_n, r_{n-1}$.
\begin{itemize}
\item Consider first there is $j\leq n-2$ such that $s_n + s_{n-1} + \dots + s_{n-j} \geq r_{n-j}$ or $s_n + s_{n-1} + \dots + s_{n-j} < r_{n-j}$ and $s_n + s_{n-1} + \dots + s_{n-j} \geq r_{n-(j+1)}$. Let $\overline{i}$ be the smallest number such that $s_n + s_{n-1} + \dots + s_{n-\overline{i}} \geq r_{n-\overline{i}}$ or $s_n + s_{n-1} + \dots + s_{n-\overline{i}} < r_{n-\overline{i}}$ and $s_n + s_{n-1} + \dots + s_{n-\overline{i}} \geq r_{n-(\overline{i}+1)}$. 
\begin{itemize}
\item If $s_n + s_{n-1} + \dots + s_{n-\overline{i}} \geq r_{n-\overline{i}}$, then $K$ is equivalent to the V-link
\begin{align*}
&V((s_n,\overline{r_n-r_{n-1}}), \dots, (s_n+\dots+s_{n-(\overline{i}-1)},\overline{r_{n-(\overline{i}-1)}-r_{n-\overline{i}}}), (r_1,s_1), \dots, (r_{n-(\overline{i}+1)},s_{n-(\overline{i}+1)}),\\ 
&(r_{n-\overline{i}},s_n+s_{n-1}+\dots+s_{n-\overline{i}})); 
\end{align*}

\item If $s_n + s_{n-1} + \dots + s_{n-\overline{i}} < r_{n-\overline{i}}$ and $s_n + s_{n-1} + \dots + s_{n-\overline{i}} \geq r_{n-(\overline{i}+1)}$, then $K$ is equivalent to the V-link
\begin{align*}
&V((r_1,\overline{s_1}), \dots, (r_{n-(\overline{i}+1)}, \overline{s_{n-(\overline{i}+1)}}), (s_n, r_n-r_{n-1}), \dots, (s_n+\dots+s_{n-(\overline{i}-1)}, r_{n-(\overline{i}-1)}-r_{n-\overline{i}}),\\ 
&(s_n+s_{n-1}+\dots+s_{n-\overline{i}}, r_{n-\overline{i}})).  
\end{align*} 
\end{itemize}

\item
Otherwise, if there is no $j\leq n-2$ such that $s_n + s_{n-1} + \dots + s_{n-j} \geq r_{n-j}$ or $s_n + s_{n-1} + \dots + s_{n-j} < r_{n-j}$ and $s_n + s_{n-1} + \dots + s_{n-j} \geq r_{n-(j+1)}$.
\begin{itemize}
\item
If $s_n+s_{n-1}+\dots+s_{1}\geq r_{1}$, $K$ is equivalent to the V-link
\begin{align*}
&V((s_n,\overline{r_n-r_{n-1}}), (s_n+s_{n-1},\overline{r_{n-1}-r_{n-2}}),\dots, (s_n+s_{n-1}+\dots+s_{2}, \overline{r_{2}-r_{1}}),\\ 
&(r_{1}, s_n+s_{n-1}+\dots+s_{1}));  
\end{align*}
\item
If $s_n+s_{n-1}+\dots+s_{1}< r_{1}$,  $K$ is equivalent to the V-link
\begin{align*}
&V((s_n,r_n-r_{n-1}), (s_n+s_{n-1},r_{n-1}-r_{n-2}),\dots, (s_n+s_{n-1}+\dots+s_{2}, r_{2}-r_{1}), \\ 
&(s_n+s_{n-1}+\dots+s_{1}, r_{1})). \qedhere 
\end{align*}
\end{itemize}
\end{itemize}
\end{proposition}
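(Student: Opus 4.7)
The plan is to prove this proposition by iteratively applying Proposition~\ref{isopoty1}---and, when the final rightmost block does not ``saturate,'' one final application of Proposition~\ref{isopoty3'}. Each iteration peels one block off the right of the current T-link-like portion, adding a new \emph{left} sub-braid at the top while merging the two rightmost blocks into one with an enlarged exponent. The process is controlled by the partial sums $S_{n-k}:=s_n+s_{n-1}+\cdots+s_{n-k}$.

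First, since the hypothesis gives $s_n<r_{n-1}<r_n$, Proposition~\ref{isopoty1} applies to the closure of $K$ with $\beta=(\sigma_1\dots\sigma_{r_1-1})^{s_1}\cdots(\sigma_1\dots\sigma_{r_{n-1}-1})^{s_{n-1}}$ on $r_{n-1}$ strands, $p=r_n$, $q=s_n$, $r=r_{n-1}$. The resulting braid carries a left sub-braid $(\sigma_{r_{n-1}-1}\dots\sigma_{r_{n-1}-s_n+1})^{r_n-r_{n-1}}$---which is exactly the V-link entry $(s_n,\overline{r_n-r_{n-1}})$---and a new rightmost block $(\sigma_1\dots\sigma_{r_{n-1}-1})^{s_{n-1}+s_n}$. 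I would then iterate: while $S_{n-k}<r_{n-(k+1)}$, apply Proposition~\ref{isopoty1} again to the current rightmost block with $p=r_{n-k+1}$, $q=S_{n-k+1}$, $r=r_{n-k}$, adding a further left sub-braid matching $(S_{n-k+1},\overline{r_{n-k+1}-r_{n-k}})$ and updating the rightmost exponent to $S_{n-k}$. Iteration stops at $k=\overline{i}$, the first index where $S_{n-\overline{i}}\geq r_{n-(\overline{i}+1)}$.

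At termination, the two sub-cases arise naturally. If $S_{n-\overline{i}}\geq r_{n-\overline{i}}$, the rightmost block $(\sigma_1\dots\sigma_{r_{n-\overline{i}}-1})^{S_{n-\overline{i}}}$ has exponent at least its width, and so serves as the torus piece of a V-link with $p=r_{n-\overline{i}}$, $q=S_{n-\overline{i}}$; the accumulated left sub-braids give the $(u_j,\overline{v_j})$ data and the remaining $(\sigma_1\dots\sigma_{r_1-1})^{s_1}\cdots(\sigma_1\dots\sigma_{r_{n-(\overline{i}+1)}-1})^{s_{n-(\overline{i}+1)}}$ forms the internal T-link part, matching the first stated V-link. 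If instead $S_{n-\overline{i}}<r_{n-\overline{i}}$ but $\geq r_{n-(\overline{i}+1)}$, I would apply Proposition~\ref{isopoty3'} once more with $r=r_{n-(\overline{i}+1)}$, $p=r_{n-\overline{i}}$, $q=S_{n-\overline{i}}$, flipping the rightmost block to $(\sigma_{S_{n-\overline{i}}-1}\dots\sigma_1)^{r_{n-\overline{i}}}$; invoking Lemma~\ref{otherbraid} to swap V-link representations then yields the second stated V-link. The ``Otherwise'' case is handled identically by running the iteration all the way to $k=n-1$ and branching on whether $S_1\geq r_1$.

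The hard part will be justifying that each successive application of Proposition~\ref{isopoty1} remains legitimate in the presence of the previously added left sub-braids $L_1,\dots,L_{k-1}$. These $L_j$ use the \emph{top} strands (indices near $r_{n-j}$) of the braid, which can extend beyond the working width $r_{n-k}$ required by the $k$-th iteration. I would resolve this either by commuting $L_j$ past the relevant sub-braid when their generator-index ranges are disjoint, or by reopening the proof of Proposition~\ref{isopoty1}---which proceeds by shrinking a strand that goes once around the braid closure---to check that the tracked strand never meets the $L_j$. Together with the careful bookkeeping required to match the parameters $(u_j,v_j,p,q)$ with the V-link form stated in the proposition, this is where the technical weight of the proof lies.
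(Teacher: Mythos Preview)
Your proposal is correct and follows essentially the same route as the paper: iterate Proposition~\ref{isopoty1}, switch to Proposition~\ref{isopoty3'} together with Lemma~\ref{otherbraid} for the second sub-case, and run the iteration to exhaustion in the ``Otherwise'' case. For the technical issue you flag, the paper's device is simply to push the accumulated left sub-braids $L_j$ anticlockwise once around the braid closure (a cyclic-conjugation move in the link) so that they sit just above the current rightmost block before each new application of Proposition~\ref{isopoty1}, and then push them back to the top afterward---this sidesteps any disjoint-support commutation analysis or reopening of the strand-shrinking argument.
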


\begin{proof}
Consider first there is $j\leq n-2$ such that $s_n + s_{n-1} + \dots + s_{n-j} \geq r_{n-j}$ or $s_n + s_{n-1} + \dots + s_{n-j} < r_{n-j}$ and $s_n + s_{n-1} + \dots + s_{n-j} \geq r_{n-(j+1)}$. Let $\overline{i}$ be the smallest number such that $s_n + s_{n-1} + \dots + s_{n-\overline{i}} \geq r_{n-\overline{i}}$ or $s_n + s_{n-1} + \dots + s_{n-\overline{i}} < r_{n-\overline{i}}$ and $s_n + s_{n-1} + \dots + s_{n-\overline{i}} \geq r_{n-(\overline{i}+1)}$. 

By Proposition~\ref{isopoty1}, there is an isotopy that takes the standard braid of $K$ to the braid
\begin{align*}
&B_1 = (\sigma_{r_{n-1}-1}\dots \sigma_{r_{n-1}-s_n+1})^{r_n-r_{n-1}}(\sigma_1\dots \sigma_{r_1-1})^{s_1} \dots (\sigma_1\dots \sigma_{r_{n-2}-1})^{s_{n-2}}\\
&(\sigma_1\dots \sigma_{r_{n-1}-1})^{s_n+s_{n-1}}.  
\end{align*}
If $s_n+s_{n-1}\geq r_{n-1}$, then $\overline{i} = 1$ and $B_1$ represents the V-link $$V((s_n,\overline{r_n-r_{n-1}}), (r_1,s_1), \dots, (r_{n-2},s_{n-2}), (r_{n-1}, s_n+s_{n-1})).$$
 
Consider now that $s_n+s_{n-1}< r_{n-1}$. We push the sub-braid $$(\sigma_{r_{n-1}-1}\dots \sigma_{r_{n-1}-s_n+1})^{r_n-r_{n-1}}$$ with $s_n$ strands of  $B_1$ anticlockwise until it is between the sub-braids of $B_1$ provided by the parameters $(r_{n-2}, s_{n-2})$ and $(r_{n-1}, s_n+s_{n-1})$ (see Figure~\ref{Isopoty1} for the direction in which we close the braids), as illustrated in Figure~\ref{T2}, so that we obtain an equivalent braid to $B_1$ of the form 
$B_1'(\sigma_1\dots \sigma_{r_{n-1}-1})^{s_n+s_{n-1}}$ with $B_1'$ a positive braid with $s_n+s_{n-1}$, $r_{n-2}$ strands if $s_n+s_{n-1} \geq r_{n-2}$, $s_n+s_{n-1} < r_{n-2}$, respectively.

\begin{figure}
\includegraphics[scale=0.25]{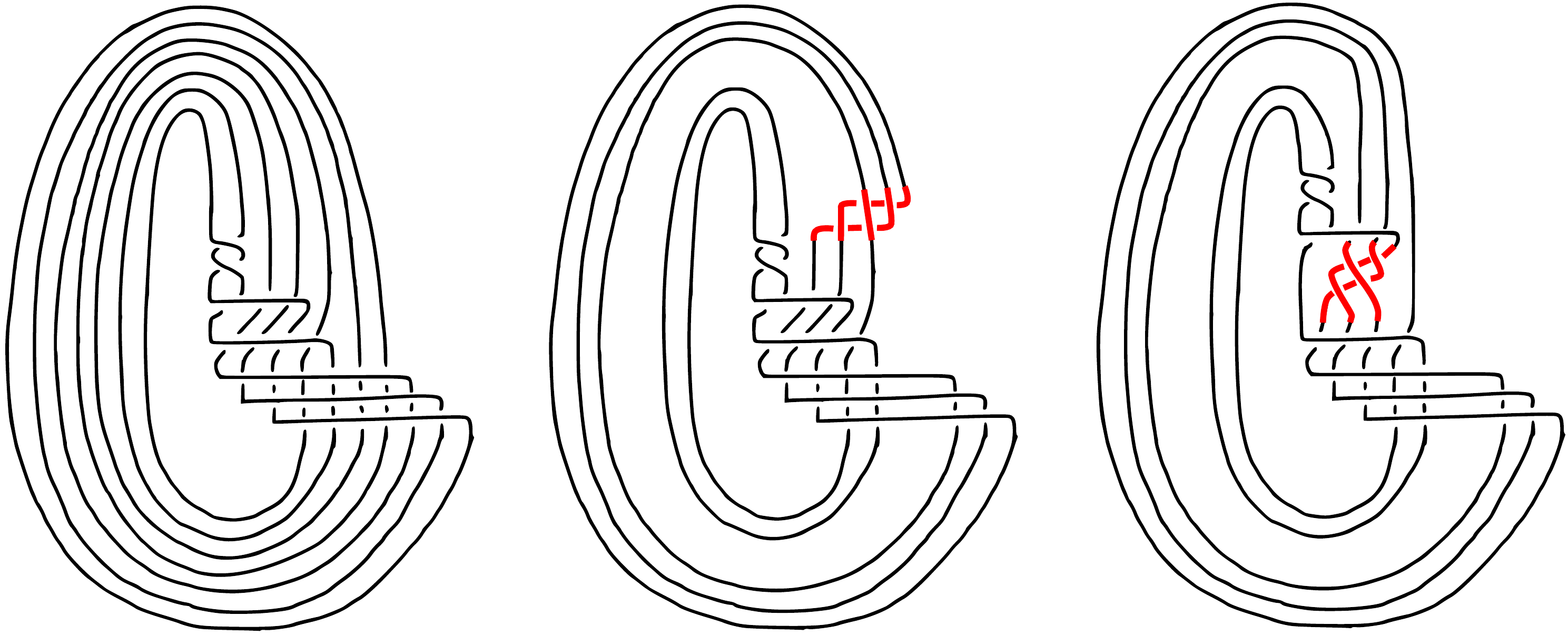} 
 \caption{We apply the isotopy of Proposition~\ref{isopoty1} to the first link to obtain the second link. Then we push the red crossings of the second link anticlockwise once around the braid closure to obtain the third link.}
\label{T2}
\end{figure}

If $s_n+s_{n-1} \geq r_{n-2}$, then $\overline{i} = 1$ and, by Proposition~\ref{isopoty3'}, there is an isotopy that takes the closure of the braid
$B_1'(\sigma_1\dots \sigma_{r_{n-1}-1})^{s_n+s_{n-1}}$
to the braid 
\begin{align*}
&(\sigma_1\dots \sigma_{r_1-1})^{s_1} \dots (\sigma_1\dots \sigma_{r_{n-2}-1})^{s_{n-2}}(\sigma_{s_n+s_{n-1}-1}\dots \sigma_{s_n+s_{n-1}-s_n+1})^{r_n-r_{n-1}} \\
&(\sigma_{s_n+s_{n-1}-1}\dots \sigma_{1})^{r_{n-1}}, 
\end{align*}
which represents the V-link
$$V((r_1,\overline{s_1}), \dots, (r_{n-2},\overline{s_{n-2}}), (s_n,r_n-r_{n-1}), (s_n+s_{n-1}, r_{n-1})).$$ 

If $s_n+s_{n-1} < r_{n-2}$, then we apply the isotopy of Proposition~\ref{isopoty1} to 
$B_1'(\sigma_1\dots \sigma_{r_{n-1}-1})^{s_n+s_{n-1}}$ and push the  sub-braid $(\sigma_{r_{n-1}-1}\dots \sigma_{r_{n-1}-s_n+1})^{r_n-r_{n-1}}$  back to the top. After that, we obtain the braid
\begin{align*}
&B_2 = (\sigma_{r_{n-2}-1}\dots \sigma_{r_{n-2}-s_n+1})^{r_n-r_{n-1}}
(\sigma_{r_{n-2}-1}\dots \sigma_{r_{n-2}-s_n-s_{n-1}+1})^{r_{n-1}-r_{n-2}} \\
&(\sigma_1\dots \sigma_{r_1-1})^{s_1} \dots (\sigma_1\dots \sigma_{r_{n-3}-1})^{s_{n-3}}(\sigma_1\dots \sigma_{r_{n-2}-1})^{s_n+s_{n-1}+s_{n-2}}.  
\end{align*}

If $s_n+s_{n-1}+s_{n-2} \geq r_{n-2}$, then $\overline{i} = 2$ and $B_2$ represents the V-link
$$V((s_n,\overline{r_n-r_{n-1}}), (s_n+s_{n-1},\overline{r_{n-1}-r_{n-2}}), (r_1,s_1), \dots, (r_{n-3},s_{n-3}), (r_{n-2}, s_n+s_{n-1}+s_{n-2})).$$

If not, then we push the sub-braid $$(\sigma_{r_{n-2}-1}\dots \sigma_{r_{n-2}-s_n+1})^{r_n-r_{n-1}}
(\sigma_{r_{n-2}-1}\dots \sigma_{r_{n-2}-s_n-s_{n-1}+1})^{r_{n-1}-r_{n-2}}$$ with $s_n+s_{n-1}$ strands anticlockwise until it is between the sub-braids provided by the parameters $(r_{n-3}, s_{n-3})$ and $(r_{n-2}, s_n+s_{n-1}+s_{n-2})$ so that we obtain an equivalent braid to $B_2$ of the form $B_2'(\sigma_1\dots \sigma_{r_{n-2}-1})^{s_n+s_{n-1}+s_{n-2}}$ with $B_2'$ a positive braid with $s_n+s_{n-1}+s_{n-2}$, $r_{n-3}$ strands if $s_n+s_{n-1}+s_{n-2}\geq r_{n-3}$, $s_n+s_{n-1}+s_{n-2}< r_{n-3}$, respectively. Then, we continue likewise until we obtain a positive braid with at least one positive full twist for $K$ of the form, if $s_n + s_{n-1} + \dots + s_{n-\overline{i}} \geq r_{n-\overline{i}}$, 
\begin{align*}
&(\sigma_{r_{n-\overline{i}}-1}\dots \sigma_{r_{n-\overline{i}}-s_n+1})^{r_n-r_{n-1}} \dots (\sigma_{r_{n-\overline{i}}-1}\dots \sigma_{r_{n-\overline{i}}-s_n-\dots-s_{n-(\overline{i}-1)}+1})^{r_{n-(\overline{i}-1)}-r_{n-\overline{i}}}\\ 
&(\sigma_1\dots \sigma_{r_1-1})^{s_1} \dots (\sigma_1\dots \sigma_{r_{n-(\overline{i}+1)}-1})^{s_{n-(\overline{i}+1)}}(\sigma_1\dots \sigma_{r_{n-\overline{i}}-1})^{s_n+s_{n-1}+\dots+s_{n-\overline{i}}}, 
\end{align*}
which represents the V-link
\begin{align*}
&V((s_n,\overline{r_n-r_{n-1}}), \dots, (s_n+\dots+s_{n-(\overline{i}-1)},\overline{r_{n-(\overline{i}-1)}-r_{n-\overline{i}}}), (r_1,s_1), \dots, (r_{n-(\overline{i}+1)},s_{n-(\overline{i}+1)})\\ 
&(r_{n-\overline{i}},s_n+s_{n-1}+\dots+s_{n-\overline{i}})), 
\end{align*}
or of the form, if $s_n + s_{n-1} + \dots + s_{n-\overline{i}} < r_{n-\overline{i}}$ and $s_n + s_{n-1} + \dots + s_{n-\overline{i}} \geq r_{n-(\overline{i}+1)}$,
\begin{align*}
&(\sigma_1\dots \sigma_{r_1-1})^{s_1} \dots (\sigma_1\dots \sigma_{r_{n-(\overline{i}+1)}-1})^{s_{n-(\overline{i}+1)}}\\ 
&(\sigma_{s_n+s_{n-1}+\dots+s_{n-\overline{i}}-1}\dots \sigma_{s_n+s_{n-1}+\dots+s_{n-\overline{i}}-s_n+1})^{r_n-r_{n-1}} \dots \\ 
&(\sigma_{s_n+s_{n-1}+\dots+s_{n-\overline{i}}-1}\dots \sigma_{s_n+s_{n-1}+\dots+s_{n-\overline{i}}-s_n-\dots-s_{n-(\overline{i}-1)}+1})^{r_{n-(\overline{i}-1)}-r_{n-\overline{i}}}\\
&(\sigma_{s_n+s_{n-1}+\dots+s_{n-\overline{i}}-1}\dots \sigma_{1})^{r_{n-\overline{i}}}, 
\end{align*}
which represents the V-link 
\begin{align*}
&V((r_1,\overline{s_1}), \dots, (r_{n-(\overline{i}+1)},\overline{s_{n-(\overline{i}+1)}}), (s_n,r_n-r_{n-1}), \dots, (s_n+\dots+s_{n-(\overline{i}-1)},r_{n-(\overline{i}-1)}-r_{n-\overline{i}}),\\ 
&(s_n+s_{n-1}+\dots+s_{n-\overline{i}}, r_{n-\overline{i}})).  
\end{align*}

Consider now that there is no $j\leq n-2$ such that $s_n + s_{n-1} + \dots + s_{n-j} \geq r_{n-j}$ or $s_n + s_{n-1} + \dots + s_{n-j} < r_{n-j}$ and $s_n + s_{n-1} + \dots + s_{n-j} \geq r_{n-(j+1)}$. Then, we continue with the above procedure until we obtain the braid
\begin{align*}
&B_{s_n+\dots+s_{1}} = (\sigma_{r_{1}-1}\dots \sigma_{r_{1}-s_n+1})^{r_n-r_{n-1}}
(\sigma_{r_{1}-1}\dots \sigma_{r_{1}-s_n-s_{n-1}+1})^{r_{n-1}-r_{n-2}}\dots\\ 
&(\sigma_{r_{1}-1}\dots \sigma_{r_{1}-s_n-s_{n-1}-\dots-s_{2}+1})^{r_{2}-r_{1}}(\sigma_1\dots \sigma_{r_{1}-1})^{s_n+s_{n-1}+\dots+s_{1}}.  
\end{align*}
If $s_n+s_{n-1}+\dots+s_{1}\geq r_{1}$, then this braid represents the V-link
\begin{align*}
&V((s_n,\overline{r_n-r_{n-1}}), (s_n+s_{n-1},\overline{r_{n-1}-r_{n-2}}),\dots, (s_n+s_{n-1}+\dots+s_{2}, \overline{r_{2}-r_{1}}),\\ 
&(r_{1}, s_n+s_{n-1}+\dots+s_{1})).  
\end{align*}

If $s_n+s_{n-1}+\dots+s_{1}< r_{1}$, then we push $r_{1}-(s_n+s_{n-1}+\dots+s_{2})$ meridional (horizontal) lines of the sub-braid $(\sigma_1\dots \sigma_{r_{1}-1})^{s_n+s_{n-1}+\dots+s_{1}}$ of $B_{s_n+\dots+s_{1}}$ anticlockwise around the braid closure to pass above the remaining sub-braid that is on the right and apply Proposition~\ref{isopoty3'} to obtain the braid
\begin{align*}
&B'_{s_n+\dots+s_{1}} = (\sigma_{k-1}\dots \sigma_{k-s_n+1})^{r_n-r_{n-1}}
(\sigma_{k-1}\dots \sigma_{k-s_n-s_{n-1}+1})^{r_{n-1}-r_{n-2}}\dots\\ 
&(\sigma_{k-1}\dots \sigma_{k-s_n-s_{n-1}-\dots-s_{2}+1})^{r_{2}-r_{1}}(\sigma_{k-1}\dots \sigma_{1})^{r_{1}} 
\end{align*}
with $k = s_n+\dots+s_{1}$. We see that this braid $B'_{s_n+\dots+s_{1}}$ represents the V-link
\begin{align*}
&V((s_n,r_n-r_{n-1}), (s_n+s_{n-1},r_{n-1}-r_{n-2}),\dots, (s_n+s_{n-1}+\dots+s_{2}, r_{2}-r_{1}),\\ 
&(s_n+s_{n-1}+\dots+s_{1}, r_{1})). \qedhere 
\end{align*}
\end{proof} 

\begin{theorem}\label{equivalence}
Let $r_1, \dots, r_n, s_1, \dots, s_n, u_1, \dots, u_m, v_1, \dots, v_m, p, q$ be positive integers such that $2\leq r_1< \dots < r_{n} < p$, $2\leq u_1< \dots < u_{m} \leq p\leq q$. Then, the V-link 
$$V((u_1,\overline{v_1}), \dots, (u_{m},\overline{v_{m}}), (r_1,s_1), \dots, (r_{n},s_{n}), (p, q))$$ 
is equivalent to the T-links
\begin{align*}
&T( (r_1,s_1), \dots, (r_{n},s_{n}), (p, q-u_m), (p+ v_m, u_m - u_{m-1}), (p+ v_m+v_{m-1}, u_{m-1} - u_{m-2}), \dots,\\
&(p+v_m+v_{m-1} + \dots + v_{2}, u_2-u_1), (p+ v_m+v_{m-1} + \dots + v_{1}, u_1))
\end{align*}
and
\begin{align*}
&T((u_1, v_1), \dots, (u_{m}, v_{m}), (q, p-r_n), (q+ s_n, r_n - r_{n-1}), (q+ s_n+s_{n-1}, r_{n-1} - r_{n-2}), \dots,\\
&(q+s_n+s_{n-1} + \dots + s_{2}, r_2-r_1), (q+ s_n+s_{n-1} + \dots + s_{1}, r_1)).
\end{align*}
\end{theorem}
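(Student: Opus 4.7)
My plan is to run Proposition~\ref{propositionlastcases} on each of the two target T-links and verify that the resulting V-link is precisely the one in the statement; since the reduction is realized by an ambient isotopy (provided by the propositions of Section 2), the T-link and V-link are then equivalent as links.

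For the first T-link, I relabel its parameter pairs as $(\tilde{r}_i, \tilde{s}_i)$, $i = 1, \dots, \tilde{n}$, with $\tilde{n} = n + m + 1$: the first $n$ pairs equal $(r_i, s_i)$, the $(n{+}1)$-st is $(p, q - u_m)$, and the pair in position $n + 1 + k$ for $1 \leq k \leq m$ is $(p + v_m + \dots + v_{m - k + 1},\, u_{m - k + 1} - u_{m - k})$, setting $u_0 := 0$. Strict monotonicity of the $\tilde{r}_i$ and positivity of the $\tilde{s}_i$ follow immediately from the V-link constraints. Since $\tilde{s}_{\tilde{n}} = u_1 \geq 2$ and $u_1 < p \leq \tilde{r}_{\tilde{n} - 1} < \tilde{r}_{\tilde{n}}$, the T-link sits in the hypothesis $1 < s_n < r_n, r_{n-1}$ of Proposition~\ref{propositionlastcases}. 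I next compute the right-to-left cumulative sums $\tilde{s}_{\tilde{n}} + \tilde{s}_{\tilde{n} - 1} + \dots + \tilde{s}_{\tilde{n} - j}$: they telescope to $u_{j+1}$ for $0 \leq j \leq m - 1$ and to $q$ at $j = m$. Comparing against $\tilde{r}_{\tilde{n} - j}$ (which equals $p + v_m + \dots + v_{j+1}$ for $j \leq m - 1$, strictly larger than $p \geq u_{j+1}$, and which equals $p \leq q$ at $j = m$), the minimal valid index is $\bar{i} = m$ and we land in the first subcase. Substituting $\bar{i} = m$ into the V-link formula of Proposition~\ref{propositionlastcases} and telescoping the differences $\tilde{r}_{\tilde{n} - j} - \tilde{r}_{\tilde{n} - j - 1} = v_{j+1}$ recovers exactly $V((u_1, \overline{v_1}), \dots, (u_m, \overline{v_m}), (r_1, s_1), \dots, (r_n, s_n), (p, q))$.

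The argument for the second T-link is parallel, with the substitutions $(r, s) \leftrightarrow (u, v)$ and $p \leftrightarrow q$. The cumulative sums now telescope to $r_{j+1}$ for $0 \leq j \leq n - 1$ and to $p$ at $j = n$. Because $p \leq q$, the minimal valid index is $\bar{i} = n$; when $p < q$ this falls in the second subcase of Proposition~\ref{propositionlastcases}, which after the analogous telescoping again yields $V((u_1, \overline{v_1}), \dots, (u_m, \overline{v_m}), (r_1, s_1), \dots, (r_n, s_n), (p, q))$.

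The main obstacle is the boundary case $p = q$ (for the second T-link) and, symmetrically, $u_m = p$ (for the first T-link): at these values the cumulative sum at the minimal index hits $\tilde{r}$ exactly rather than strictly exceeding it, and Proposition~\ref{propositionlastcases} outputs a V-link with the roles of the upper and lower extensions interchanged. I would settle this by invoking Lemma~\ref{otherbraid}, whose two braid presentations of a V-link realize precisely the symmetry that identifies the interchanged forms in these boundary cases. Remaining degeneracies such as $m = 0$, $n = 0$, or $u_m = q$ (where a $\tilde{s}$ vanishes and a pair drops out of the T-link) reduce to direct applications of Lemma~\ref{case2}. The rest of the proof is systematic bookkeeping: verifying monotonicity, telescoping sums, and matching into the explicit V-link formulas supplied by Proposition~\ref{propositionlastcases}.
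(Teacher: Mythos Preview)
Your approach is essentially the same as the paper's: apply Proposition~\ref{propositionlastcases} to each of the two T-links and read off the resulting V-link. The paper's own proof is a two-line sketch (``apply Proposition~\ref{propositionlastcases}; for $m=0$ use Lemma~\ref{case2}, for $n=0$ use Lemma~\ref{case1}''), and your telescoping computation is exactly the verification that this application really produces the stated V-link. One small correction: the degeneracy $n=0$ for the second T-link is handled by Lemma~\ref{case1}, not Lemma~\ref{case2}, since there the last exponent $p$ satisfies $p\le q$ rather than $p\ge q$; otherwise your bookkeeping and your treatment of the $p=q$ and $u_m=p$ boundary cases via Lemma~\ref{otherbraid} are on target.
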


\begin{proof} 
For the first T-link, if $m\geq 1$, then it follows by applying Proposition~\ref{propositionlastcases} to this T-link, otherwise it follows from Lemma~\ref{case2}.

For the second T-link, if $n\geq 1$, then it follows by applying Proposition~\ref{propositionlastcases} to this T-link, otherwise it follows from Lemma~\ref{case1}.
\end{proof}

\begin{corollary}\label{symmetry}
Let $r_1, \dots, r_n, s_1, \dots, s_n, u_1, \dots, u_m, v_1, \dots, v_m, p, q$ be positive integers such that $2\leq r_1< \dots < r_{n} < p$, $2\leq u_1< \dots < u_{m} \leq p\leq q$. Then, the T-links
\begin{align*}
&T( (r_1,s_1), \dots, (r_{n},s_{n}), (p, q-u_m), (p+ v_m, u_m - u_{m-1}), (p+ v_m+v_{m-1}, u_{m-1} - u_{m-2}), \dots,\\
&(p+v_m+v_{m-1} + \dots + v_{2}, u_2-u_1), (p+ v_m+v_{m-1} + \dots + v_{1}, u_1))
\end{align*}
and
\begin{align*}
&T((u_1, v_1), \dots, (u_{m}, v_{m}), (q, p-r_n), (q+ s_n, r_n - r_{n-1}), (q+ s_n+s_{n-1}, r_{n-1} - r_{n-2}), \dots,\\
&(q+s_n+s_{n-1} + \dots + s_{2}, r_2-r_1), (q+ s_n+s_{n-1} + \dots + s_{1}, r_1)).
\end{align*}
are equivalent.
\end{corollary}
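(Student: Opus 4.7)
The plan is to observe that Corollary~\ref{symmetry} is an immediate consequence of Theorem~\ref{equivalence} by transitivity of link equivalence. Theorem~\ref{equivalence} takes the V-link
$$V((u_1,\overline{v_1}), \dots, (u_{m},\overline{v_{m}}), (r_1,s_1), \dots, (r_{n},s_{n}), (p, q))$$
and produces \emph{two} T-link presentations of it — exactly the two T-links that appear in the statement of Corollary~\ref{symmetry}. Since both T-links are equivalent, as links in $S^3$, to a common V-link, they are equivalent to each other.

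Concretely, I would write: fix positive integers $r_1,\dots,r_n,s_1,\dots,s_n,u_1,\dots,u_m,v_1,\dots,v_m,p,q$ satisfying the hypotheses $2\leq r_1<\dots<r_n<p$ and $2\leq u_1<\dots<u_m\leq p\leq q$, so that the V-link displayed above is defined. Apply Theorem~\ref{equivalence} once to conclude that this V-link is equivalent to the first T-link in the statement. Apply Theorem~\ref{equivalence} a second time to conclude that the same V-link is equivalent to the second T-link. Transitivity of the equivalence of links in $S^3$ then yields the stated equivalence between the two T-links, which finishes the proof.

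There is really no obstacle to surmount: Theorem~\ref{equivalence} already does all the work, since it exhibits two different T-link descriptions of one and the same V-link. The only thing worth emphasizing in the writeup is that no further isotopy or computation is required — the roles of the parameter blocks $(r_i,s_i)$ and $(u_j,v_j)$ get swapped between the two T-link presentations precisely because Theorem~\ref{equivalence} is symmetric under the interchange afforded by Lemma~\ref{otherbraid} (rotating the projection plane by $180^\circ$). So the corollary can be stated as: \emph{by Theorem~\ref{equivalence} both T-links represent the same V-link, hence they are equivalent.}
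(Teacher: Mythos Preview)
Your proof is correct and is essentially the same as the paper's: the paper simply writes ``It follows from Theorem~\ref{equivalence}'', which is exactly the transitivity argument you spell out. Your additional remark about Lemma~\ref{otherbraid} underlying the symmetry is accurate but not needed for the proof itself.
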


\begin{proof} 
It follows from Theorem~\ref{equivalence}.
\end{proof}

\begin{theorem}\label{equivalenceTV-links}
Minimal braids of T-links are the same as V-link braids. 
In other words, a minimal braid of a T-link represents a V-link and a V-link represents a minimal braid of a T-link.
Furthermore, they are related as follows: the V-link
$$V((u_1,\overline{v_1}), \dots, (u_{m},\overline{v_{m}}), (r_1,s_1), \dots, (r_{n},s_{n}), (p, q))$$ 
is equivalent to the T-links
\begin{align*}
&T( (r_1,s_1), \dots, (r_{n},s_{n}), (p, q-u_m), (p+ v_m, u_m - u_{m-1}), \dots, (p+v_m+\dots + v_{2}, u_2-u_1),\\
&(p+ v_m+ \dots + v_{1}, u_1))\text{ and }T((u_1, v_1), \dots, (u_{m}, v_{m}), (q, p-r_n), (q+ s_n, r_n - r_{n-1}), \dots,\\
&(q+s_n+\dots + s_{2}, r_2-r_1), (q+ s_n+ \dots + s_{1}, r_1)).
\end{align*}
\end{theorem}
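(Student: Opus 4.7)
The plan is to assemble this theorem directly from three pieces already established in this section. The statement has two halves: an equivalence of families (V-link braids $=$ minimal braids of T-links) and an explicit dictionary between the parameters. The explicit dictionary is precisely the content of Theorem~\ref{equivalence}, so once the equivalence of families is in hand, the second half of the statement is immediate from that theorem.

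For the equivalence of families, I would argue each direction separately. That every V-link braid is a minimal braid of a T-link follows by combining Lemma~\ref{minimal} (a V-link braid is a positive braid containing a positive full twist, hence minimal by the Franks--Williams bound) with Theorem~\ref{equivalence} (such a V-link braid represents the two explicit T-links given in the statement); these together say that the V-link braid is a minimal braid of each of those T-links. For the converse direction, that every T-link has a minimal braid representative which is a V-link braid, I would start from the standard positive braid
$$(\sigma_1\dots\sigma_{r_1-1})^{s_1}\dots(\sigma_1\dots\sigma_{r_n-1})^{s_n}$$
of an arbitrary nontrivial T-link, reduce to the case $s_n>1$ by destabilization (as noted preceding Lemma~\ref{case2}), and then apply the trichotomy that is already handled by Lemmas~\ref{case2} and \ref{case1} and Proposition~\ref{propositionlastcases}. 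In each branch of this case analysis the output is explicitly a V-link braid, and by Lemma~\ref{minimal} it is automatically minimal.

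The main obstacle is not in this theorem itself but was resolved earlier: the bookkeeping in Proposition~\ref{propositionlastcases} which, using the isotopies of Section~2, moves the sub-braids $(\sigma_1\dots\sigma_{r_i-1})^{s_i}$ to the left-hand side of the torus braid in reversed orientation and collects the remaining powers into a full-twist-containing core. Provided that proposition produces exactly the V-link parameters listed in Theorem~\ref{equivalence}, the present theorem is a short corollary; the only content one actually needs to write is a sentence citing Lemma~\ref{minimal} for minimality and a sentence citing Theorem~\ref{equivalence} for the explicit equivalence with the two T-links.
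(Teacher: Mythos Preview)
Your proposal is correct and matches the paper's proof essentially verbatim: the paper cites Lemmas~\ref{minimal}, \ref{case2}, \ref{case1} and Proposition~\ref{propositionlastcases} for the direction ``minimal braid of a T-link $\Rightarrow$ V-link braid'', cites Lemma~\ref{minimal} and Theorem~\ref{equivalence} for the reverse direction, and invokes Theorem~\ref{equivalence} again for the explicit parameter dictionary. There is nothing to add.
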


\begin{proof} 
A minimal braid of a T-link is a V-link by Lemmas~\ref{minimal}, \ref{case2}, \ref{case1}, and Proposition~\ref{propositionlastcases}, and 
a V-link is a minimal braid of a T-link by Lemma~\ref{minimal} and Theorem~\ref{equivalence}.
Furthermore, they are related as stated by Theorem~\ref{equivalence}.
\end{proof} 

\section{On Distinguishing T-Links}\label{section4}

In this section, we use the equivalence between T-links and V-links to obtain
an obstruction to isotopy among T-links.  The idea is that the V-link
representative keeps track of the number of parameter pairs in a controlled
way, once the obvious degeneracies have been removed.

Before applying Theorem~\ref{equivalenceTV-links} for parameter counting, we
exclude one harmless degeneracy.  Suppose that
\[
q=u_m.
\]
Since a V-link satisfies
\[
u_m\le p\le q,
\]
we must have
\[
q=u_m=p.
\]
In this case, the first T-link presentation associated to
\[
V((u_1,\overline{v_1}),\dots,(u_m,\overline{v_m}),
(r_1,s_1),\dots,(r_n,s_n),(p,q))
\]
in Theorem~\ref{equivalenceTV-links} contains the zero-exponent block
\[
(p,q-u_m)=(p,0),
\]
which is omitted under the standard simplification.  Moreover, by
Theorem~\ref{thm:rotate-V-link}, the same link is represented by
\[
V((r_1,\overline{s_1}),\dots,(r_n,\overline{s_n}),
(u_1,v_1),\dots,(u_m,v_m),(p,p)).
\]
Since \(u_m=p\), this simplifies to
\[
V((r_1,\overline{s_1}),\dots,(r_n,\overline{s_n}),
(u_1,v_1),\dots,(u_{m-1},v_{m-1}),(p,p+v_m)).
\]
Thus the case \(q=u_m\) is already absorbed by a shorter V-link presentation.
For this reason, throughout this section we assume
\[
q\neq u_m.
\]
Equivalently, since \(u_m\le p\le q\), we assume
\[
q>u_m.
\]

Under this convention, Theorem~\ref{equivalenceTV-links} gives useful
parameter-counting information.  Every T-link admits a braid-index-realizing
representative which is a V-link braid.  Moreover, if a V-link is written in
the form
\[
V((u_1,\overline{v_1}),\dots,(u_j,\overline{v_j}),
(w_1,z_1),\dots,(w_i,z_i)),
\]
where the last non-overlined pair \((w_i,z_i)\) is the final block \((p,q)\),
then the corresponding T-link has exactly \(j+i\) parameter pairs.

\begin{theorem}\label{thm:parameter-counting-obstruction}
Let
\[
2\le r_1<\cdots<r_n
\qquad\text{and}\qquad
2\le r'_1<\cdots<r'_m,
\]
and let \(s_1,\dots,s_n,s'_1,\dots,s'_m>0\), with \(s_n,s'_m>1\).
Consider the T-link presentations
\[
L=T((r_1,s_1),\dots,(r_n,s_n))
\qquad\text{and}\qquad
L'=T((r'_1,s'_1),\dots,(r'_m,s'_m)).
\]
If
\[
n>2\,br(L')-2,
\]
then \(L\) and \(L'\) are not isotopic in \(S^3\).
\end{theorem}

\begin{proof}
Assume, for contradiction, that \(L\) and \(L'\) are isotopic.

The assumptions
\[
2\le r_1<\cdots<r_n
\qquad\text{and}\qquad
s_1,\dots,s_n>0
\]
ensure that the displayed T-link presentation of \(L\) has no zero-exponent
blocks and no consecutive blocks with the same strand number.  Thus the
displayed presentation of \(L\) is in the simplified form considered in this
section.

Apply Theorem~\ref{equivalenceTV-links} to this displayed T-link presentation
of \(L\). Then \(L\) admits a braid-index-realizing representative which is a
V-link braid. We choose this V-link representative in the reduced form fixed at
the beginning of this section, so that the degeneracy \(q=u_m\) does not occur
and no zero-exponent block is produced in the corresponding T-link
presentation. Write it as
\[
V=
V((u_1,\overline{v_1}),\dots,(u_j,\overline{v_j}),
(w_1,z_1),\dots,(w_i,z_i)),
\]
where the last pair \((w_i,z_i)\) is the final V-link block. In particular,
\[
w_i=br(L).
\]

By the explicit correspondence in Theorem~\ref{equivalenceTV-links}, this
V-link, which has \(j\) overlined pairs and \(i\) non-overlined pairs including
the final block, corresponds to the displayed T-link presentation of \(L\).
Since this presentation has \(n\) parameter pairs, we have
\[
j+i=n.
\]

On both sides of the V-link, the strand parameters are strictly increasing and
lie in
\[
\{2,\dots,w_i\}.
\]
Therefore each side contains at most \(w_i-1\) pairs, and hence
\[
j\le w_i-1
\qquad\text{and}\qquad
i\le w_i-1.
\]
Thus
\[
n=j+i\le 2w_i-2.
\]

Since \(L\) and \(L'\) are isotopic, they have the same braid index. Thus
\[
w_i=br(L)=br(L').
\]
Consequently,
\[
n\le 2br(L')-2,
\]
contradicting the hypothesis. Therefore \(L\) and \(L'\) are not isotopic in
\(S^3\).
\end{proof}

\begin{corollary}\label{cor:parameter-counting-obstruction-rm}
Let
\[
2\le r_1<\cdots<r_n
\qquad\text{and}\qquad
2\le r'_1<\cdots<r'_m,
\]
and let \(s_1,\dots,s_n,s'_1,\dots,s'_m>0\), with \(s_n,s'_m>1\).
Consider the T-link presentations
\[
L=T((r_1,s_1),\dots,(r_n,s_n))
\qquad\text{and}\qquad
L'=T((r'_1,s'_1),\dots,(r'_m,s'_m)).
\]
If
\[
n>2r'_m-2,
\]
then \(L\) and \(L'\) are not isotopic in \(S^3\).
\end{corollary}

\begin{proof}
The displayed presentation of \(L'\) is the closure of a braid on \(r'_m\)
strands. Hence
\[
br(L')\le r'_m.
\]
Therefore
\[
2br(L')-2\le 2r'_m-2.
\]
By hypothesis,
\[
n>2r'_m-2,
\]
and hence
\[
n>2br(L')-2.
\]
The result follows from Theorem~\ref{thm:parameter-counting-obstruction}.
\end{proof}

The parameter-counting obstruction above distinguishes many T-links by comparing
the number of parameter pairs with the braid index.  We now give a complementary
obstruction for V-links with the same braid index.  When two V-link
presentations have the same final strand number \(p\), their associated
V-braids are positive braids on the same number of strands.  Hence, if they
represent the same link, Lemma~\ref{lem:crossing-number-positive-same-strands}
implies that their crossing numbers must agree.  Thus a difference in the
crossing numbers of the two V-braids obstructs isotopy.

\begin{corollary}\label{cor:crossing-number-obstruction-V-links}
Let
\[
L=
V((u_1,\overline{v_1}),\dots,(u_m,\overline{v_m}),
(r_1,s_1),\dots,(r_n,s_n),(p,q))
\]
and
\[
L'=
V((u'_1,\overline{v'_1}),\dots,(u'_{m'},\overline{v'_{m'}}),
(r'_1,s'_1),\dots,(r'_{n'},s'_{n'}),(p,q'))
\]
be two V-links with the same final strand number \(p\). If
\[
\sum_{i=1}^m v_i(u_i-1)
+
\sum_{j=1}^n s_j(r_j-1)
+
q(p-1)
\neq
\sum_{i=1}^{m'} v'_i(u'_i-1)
+
\sum_{j=1}^{n'} s'_j(r'_j-1)
+
q'(p-1),
\]
then \(L\) and \(L'\) are not isotopic in \(S^3\).
\end{corollary}

\begin{proof}
Let \(B_L\) and \(B_{L'}\) be the V-braids associated to \(L\) and \(L'\),
respectively. Both are positive braids on \(p\) strands. The crossing number of
\(B_L\) is
\[
c(B_L)=
\sum_{i=1}^m v_i(u_i-1)
+
\sum_{j=1}^n s_j(r_j-1)
+
q(p-1).
\]
Indeed, an overlined block \((u_i,\overline{v_i})\) contributes
\(v_i(u_i-1)\) crossings, a non-overlined block \((r_j,s_j)\) contributes
\(s_j(r_j-1)\) crossings, and the final block \((p,q)\) contributes
\(q(p-1)\) crossings. Similarly,
\[
c(B_{L'})=
\sum_{i=1}^{m'} v'_i(u'_i-1)
+
\sum_{j=1}^{n'} s'_j(r'_j-1)
+
q'(p-1).
\]

Suppose, for contradiction, that \(L\) and \(L'\) are isotopic. Then \(B_L\)
and \(B_{L'}\) are positive braids on the same number \(p\) of strands
representing the same link. By
Lemma~\ref{lem:crossing-number-positive-same-strands}, they must have the same
number of crossings. Hence
\[
c(B_L)=c(B_{L'}),
\]
contradicting the assumed inequality. Therefore \(L\) and \(L'\) are not
isotopic in \(S^3\).
\end{proof}

We conclude this section by emphasizing the role of V-link representatives in
the comparison of T-link presentations.  The usefulness of the V-link
description is that it places T-links in a more rigid braid-theoretic position.
From an arbitrary T-link presentation, it can be difficult to decide whether
two different parameter lists represent the same link.  In the V-link form,
however, the link is represented by a positive braid with an explicit full
twist, and this braid realizes the braid index.  Thus the braid index is visible
directly from the final strand number.

This gives strong restrictions on possible equivalences.  If two V-link
presentations represent the same link, then their final strand numbers must
agree.  Hence the comparison of T-link presentations can be reduced to the
comparison of V-link presentations with the same number of strands.  Once this
number is fixed, the V-link description still gives additional rigidity.
Indeed, V-link braids are positive braid representatives realizing the braid
index; therefore, two V-link presentations of the same link with the same final
strand number must also have the same number of crossings.  Thus crossing-number
comparisons provide a second obstruction to isotopy, complementary to braid
index considerations.

This is one of the main advantages of the V-link language: it places T-links in
a minimal braid-theoretic position where both braid index and crossing number
can be used effectively to compare presentations.

\section{Equivalent V-link and T-link Presentations}\label{section5}

In this section, we use the explicit correspondence between V-links and
T-links to study when different parameter presentations represent the same
link.  We begin by recovering the Birman--Kofman parameter equivalence from the
V-link/T-link correspondence: the two T-link presentations associated to a
single V-link are exchanged by the Birman--Kofman transformation.  We then
analyze when these two presentations are genuinely different after the standard
simplifications.

Next, we use the full-twist structure of V-links to generalize this phenomenon.
By redistributing the full-twist part of the final V-link block, we obtain
additional V-link presentations of the same link.  Combining this with the
V-link/T-link correspondence produces, under simple non-degeneracy conditions,
four distinct T-link presentations of the same link.

We also show that the opposite phenomenon occurs.  There are infinite families
of Lorenz links for which the T-link presentation is unique: once the
destabilization case \(s_n=1\) is excluded, these links admit only one T-link
presentation.

Finally, we prove that non-uniqueness is not caused merely by increasing the
braid index.  Even when the braid index is fixed, the number of distinct V-link
and T-link presentations of the same link can be made arbitrarily large.

As a first consequence of Theorem~\ref{equivalenceTV-links}, we obtain two
T-link presentations of the same link from each V-link.

\begin{corollary}\label{cor:two-T-links-same}
The two T-links
\begin{align*}
&T( (r_1,s_1), \dots, (r_{n},s_{n}), (p, q-u_m), (p+ v_m, u_m - u_{m-1}), \dots, (p+v_m+\dots + v_{2}, u_2-u_1),\\
&(p+ v_m+ \dots + v_{1}, u_1)).
\end{align*}
and
\begin{align*}
&T((u_1, v_1), \dots, (u_{m}, v_{m}), (q, p-r_n), (q+ s_n, r_n - r_{n-1}), \dots, (q+s_n+\dots + s_{2}, r_2-r_1),\\
&(q+ s_n+ \dots + s_{1}, r_1))
\end{align*}
represent the same link in \(S^3\). 
\end{corollary}

\begin{proof}
By Theorem~\ref{equivalenceTV-links}, both T-links are equivalent to the same
V-link
\[
V\big((u_1,\overline{v_1}),\dots,(u_m,\overline{v_m}),
(r_1,s_1),\dots,(r_n,s_n),(p,q)\big).
\]
Therefore they represent the same link in \(S^3\).
\end{proof}

The previous corollary gives a V-link interpretation of a parameter equivalence
due to Birman--Kofman \cite[Corollary~4]{newtwis}. Indeed, applying the
Birman--Kofman transformation to the first T-link presentation in
Corollary~\ref{cor:two-T-links-same} gives exactly the second one. Hence the
Birman--Kofman equivalence is recovered as the T-link-level expression of a
single V-link presentation. We record this consequence below.

\begin{corollary}[Birman--Kofman]\label{cor:BK}
Let
\[
T((R_1,S_1),\dots,(R_N,S_N))
\]
be a T-link. Define
\[
R'_1=S_N,\quad R'_2=S_N+S_{N-1},\quad \dots,\quad
R'_N=S_N+S_{N-1}+\cdots+S_1
\]
and
\[
S'_1=R_N-R_{N-1},\quad S'_2=R_{N-1}-R_{N-2},\quad \dots,\quad
S'_{N-1}=R_2-R_1,\quad S'_N=R_1.
\]
Then the T-links
\[
T((R_1,S_1),\dots,(R_N,S_N))
\]
and
\[
T((R'_1,S'_1),\dots,(R'_N,S'_N))
\]
represent the same link in \(S^3\).
\end{corollary}

\begin{proof}
By Theorem~\ref{equivalenceTV-links}, every T-link presentation arises, after
the standard simplifications, as one of the two T-link presentations associated
to a V-link in Corollary~\ref{cor:two-T-links-same}. Therefore it is enough to
check that the Birman--Kofman transformation sends the first presentation in
Corollary~\ref{cor:two-T-links-same} to the second one. If the given T-link
appears as the second presentation, the conclusion follows by applying the same
argument in reverse, since the Birman--Kofman transformation is an involution.

We write the calculation for the case in which both the overlined and
non-overlined blocks are non-empty; if one of the blocks is empty, the
corresponding terms are omitted.

Write the first presentation in Corollary~\ref{cor:two-T-links-same} as
\[
T((R_1,S_1),\dots,(R_N,S_N)),
\]
where \(N=m+n+1\). Thus
\[
(R_i,S_i)=(r_i,s_i)
\qquad\text{for } i=1,\dots,n,
\]
\[
(R_{n+1},S_{n+1})=(p,q-u_m),
\]
and, for \(a=1,\dots,m\),
\[
(R_{n+1+a},S_{n+1+a})
=
(p+v_m+\cdots+v_{m-a+1},\,u_{m-a+1}-u_{m-a}),
\]
with the convention \(u_0=0\). Therefore the first presentation is
\begin{align*}
T((R_1,S_1),\dots,(R_N,S_N))
={}&T((r_1,s_1),\dots,(r_n,s_n),(p,q-u_m), (p+v_m,u_m-u_{m-1}), \dots,\\
&\quad (p+v_m+\cdots+v_1,u_1)).
\end{align*}

Now apply the Birman--Kofman transformation. The transformed strand parameters
are
\[
R'_j=S_N+S_{N-1}+\cdots+S_{N-j+1},
\]
and the transformed exponents are
\[
S'_j=R_{N-j+1}-R_{N-j},
\]
where we use the convention \(R_0=0\).

For \(j=1,\dots,m\), the last \(j\) second coordinates of the first
presentation are
\[
u_1,\quad u_2-u_1,\quad \dots,\quad u_j-u_{j-1}.
\]
Hence
\[
R'_j=u_j.
\]
Moreover, the corresponding differences of the first coordinates are
\[
S'_j=v_j.
\]
Thus the first \(m\) transformed pairs are
\[
(R'_j,S'_j)=(u_j,v_j),
\qquad j=1,\dots,m.
\]

The next transformed strand parameter is obtained by also adding the second
coordinate \(q-u_m\). Hence
\[
R'_{m+1}=u_m+(q-u_m)=q.
\]
The corresponding exponent is
\[
S'_{m+1}=p-r_n.
\]
Therefore
\[
(R'_{m+1},S'_{m+1})=(q,p-r_n).
\]

For the remaining pairs, for \(j=1,\dots,n\), we obtain
\[
R'_{m+1+j}
=
q+s_n+s_{n-1}+\cdots+s_{n-j+1},
\]
and
\[
S'_{m+1+j}
=
r_{n-j+1}-r_{n-j},
\]
with the convention \(r_0=0\). Hence these pairs are
\[
(q+s_n,r_n-r_{n-1}),\dots,
(q+s_n+\cdots+s_2,r_2-r_1),
(q+s_n+\cdots+s_1,r_1).
\]

Therefore the Birman--Kofman transform of the first presentation is exactly
\begin{align*}
&T((u_1, v_1), \dots, (u_{m}, v_{m}), (q, p-r_n), (q+ s_n, r_n - r_{n-1}), \dots, (q+s_n+\dots + s_{2}, r_2-r_1),\\
&(q+ s_n+ \dots + s_{1}, r_1)).
\end{align*}
This is precisely the second T-link presentation in
Corollary~\ref{cor:two-T-links-same}. Since the two presentations in
Corollary~\ref{cor:two-T-links-same} represent the same link, the two
T-links in the statement represent the same link in \(S^3\).
\end{proof}

Thus the Birman--Kofman equivalence is recovered from the V-link/T-link
correspondence.  At the level of T-link parameters, the Birman--Kofman
transformation sends one of the two presentations associated to a V-link to the
other.  In this sense, Theorem~\ref{equivalenceTV-links} gives a minimal-braid
interpretation of the classical parameter equivalence for T-links.

We say that a T-link presentation is simplified if it contains no parameter
pair with second coordinate equal to zero and no two consecutive parameter
pairs with the same first coordinate.  Given any parameter list, its standard
simplification is obtained by omitting all pairs of the form \((r,0)\) and by
replacing consecutive pairs \((r,a),(r,b)\) with \((r,a+b)\).  These operations
preserve the represented link, and hence only change the presentation, not the
link type.

\begin{proposition}\label{prop:when-two-T-presentations-coincide}
Let
\[
L=
V((u_1,\overline{v_1}),\dots,(u_m,\overline{v_m}),
(r_1,s_1),\dots,(r_n,s_n),(p,q))
\]
be a V-link. Let \(T_1\) and \(T_2\) be the two T-link presentations obtained
from Corollary~\ref{cor:two-T-links-same}, after the standard simplifications.
If \(T_1\) and \(T_2\) coincide as ordered lists of parameters, then one of the
following holds:
\begin{enumerate}
\item \(m=n\), $u_m<p$,
\[
(u_i,v_i)=(r_i,s_i)\quad\text{for all }i=1,\dots,n,
\]
and
\[
q=p;
\]

\item \(m=n+1\),
\[
(u_i,v_i)=(r_i,s_i)\quad\text{for all }i=1,\dots,n,
\]
and
\[
(u_m,v_m)=(p,q-p).
\]
\end{enumerate}

Conversely, in each of these two cases, the two simplified T-link presentations
coincide.
\end{proposition}

\begin{proof}
Let \(T_1\) and \(T_2\) be the two T-link presentations obtained from
Corollary~\ref{cor:two-T-links-same}, after the standard simplifications.

First suppose that \(m=n=0\). Then
\[
T_1=T((p,q))
\qquad\text{and}\qquad
T_2=T((q,p)).
\]
Hence \(T_1=T_2\) if and only if \(q=p\), which is case (1).

Next suppose that \(m=0\) and \(n\ge1\). Then
\[
T_1=T((r_1,s_1),\dots,(r_n,s_n),(p,q)),
\]
whereas
\[
T_2=
T((q,p-r_n),(q+s_n,r_n-r_{n-1}),\dots,
(q+s_n+\cdots+s_1,r_1)).
\]
The first pair of \(T_1\) has first coordinate \(r_1\), while the first pair of
\(T_2\) has first coordinate \(q\). Since
\[
r_1<p\le q,
\]
these first coordinates are different. Thus \(T_1\neq T_2\), so no coincidence
occurs in this case.

Now suppose that \(n=0\) and \(m\ge1\). Before simplification, the two
presentations are
\[
T_1=
T((p,q-u_m),
(p+v_m,u_m-u_{m-1}),\dots,
(p+v_m+\cdots+v_1,u_1))
\]
and
\[
T_2=
T((u_1,v_1),\dots,(u_m,v_m),(q,p)).
\]

If \(m>1\), then \(u_1<p\). If \(q>u_m\), then \(T_1\) begins with a pair whose
first coordinate is \(p\), while \(T_2\) begins with \((u_1,v_1)\), whose first
coordinate is \(u_1<p\). Hence \(T_1\neq T_2\).

If \(q=u_m\), then, since \(p\le q\le u_m\le p\), we have \(q=u_m=p\). The
pair \((p,q-u_m)=(p,0)\) is omitted from \(T_1\). The first remaining pair of
\(T_1\) has first coordinate
\[
p+v_m>p,
\]
whereas \(T_2\) begins with \((u_1,v_1)\), with \(u_1<p\). Hence again
\(T_1\neq T_2\). Thus no coincidence occurs when \(n=0\) and \(m>1\).

It remains to consider \(n=0\) and \(m=1\). Then
\[
T_1=T((p,q-u_1),(p+v_1,u_1))
\]
and
\[
T_2=T((u_1,v_1),(q,p)).
\]
If \(q=u_1\), then \(q=u_1=p\), so after simplification
\[
T_1=T((p+v_1,p)),
\]
whereas \(T_2\) simplifies to
\[
T((p,v_1+p)).
\]
These are distinct because \(p+v_1\neq p\). Therefore equality can occur only
when \(q>u_1\). In that case no simplification occurs in the first pair of
\(T_1\), and comparing the first pairs gives
\[
u_1=p
\qquad\text{and}\qquad
v_1=q-p.
\]
Conversely, if
\[
(u_1,v_1)=(p,q-p),
\]
then
\[
T_1=T((p,q-p),(q,p))=T_2.
\]
Thus, when \(n=0\), the only coincidence is case (2).

Finally, suppose that \(m,n\ge1\). Before simplification, the two presentations
are
\begin{align*}
T_1
={}&T((r_1,s_1),\dots,(r_n,s_n),
(p,q-u_m),
(p+v_m,u_m-u_{m-1}),\dots,
(p+v_m+\cdots+v_1,u_1))
\end{align*}
and
\begin{align*}
T_2
={}&T((u_1,v_1),\dots,(u_m,v_m),
(q,p-r_n),
(q+s_n,r_n-r_{n-1}),\dots,
(q+s_n+\cdots+s_1,r_1)).
\end{align*}

First assume that \(q=u_m\). Then, since \(p\le q\le u_m\le p\), we have
\[
q=u_m=p.
\]
Hence the pair \((p,q-u_m)=(p,0)\) is omitted from \(T_1\). The first coordinate
in \(T_1\) becomes at least \(p\) for the first time at the pair
\[
(p+v_m,p-u_{m-1}),
\]
whose first coordinate is \(p+v_m>p\). In \(T_2\), the last initial block is
\[
(u_m,v_m)=(p,v_m),
\]
followed by
\[
(q,p-r_n)=(p,p-r_n).
\]
After simplification these combine into a pair with first coordinate \(p\).
Thus the first coordinate at which \(T_2\) reaches the \(p\)-level is \(p\),
whereas for \(T_1\) it is \(p+v_m\). Therefore \(T_1\neq T_2\). Hence equality
cannot occur when \(q=u_m\).

Thus, if \(T_1=T_2\), we must have \(q>u_m\), so no zero-exponent block is
omitted from \(T_1\). Now compare the first position at which the first
coordinate is at least \(p\).

In \(T_1\), this occurs at position \(n+1\), at the pair
\[
(p,q-u_m).
\]
In \(T_2\), there are two cases.

If \(u_m<p\), then this occurs at position \(m+1\), at the pair
\[
(q,p-r_n).
\]
Thus equality forces
\[
m=n.
\]
Comparing the first \(n\) pairs gives
\[
(u_i,v_i)=(r_i,s_i)
\quad\text{for all }i=1,\dots,n.
\]
Comparing the next pair gives
\[
(q,p-r_n)=(p,q-u_m).
\]
Since \(u_m=r_n\), this forces
\[
q=p.
\]
Thus case (1) holds.

If \(u_m=p\), then the first coordinate in \(T_2\) reaches \(p\) at position
\(m\), at the pair
\[
(u_m,v_m)=(p,v_m).
\]
Since \(T_1\) reaches \(p\) at position \(n+1\), equality forces
\[
m=n+1.
\]
Comparing the first \(n\) pairs gives
\[
(u_i,v_i)=(r_i,s_i)
\quad\text{for all }i=1,\dots,n.
\]
Comparing the next pair gives
\[
(p,v_m)=(p,q-u_m).
\]
Since \(u_m=p\), we get
\[
v_m=q-p.
\]
Thus case (2) holds.

Conversely, suppose case (1) holds. If \(n=0\), then \(m=0\) and \(q=p\), so
\[
T_1=T((p,p))=T_2.
\]
Now suppose \(n\ge1\). Then \(m=n\), the two initial blocks agree, and
\(q=p\). In particular,
\[
u_m=r_n<p.
\]
Hence no simplification affects the relevant transition pair, and substitution
into the formulas shows that the two simplified ordered lists are equal.

Suppose case (2) holds. Then \(m=n+1\), the first \(n\) overlined blocks agree
with the \(n\) non-overlined blocks, and
\[
(u_m,v_m)=(p,q-p).
\]
Since \(v_m>0\), we have \(q>p\), so no zero-exponent block appears.

If \(n=0\), then \(m=1\), and the two presentations are
\[
T_1=T((p,q-u_1),(p+v_1,u_1))
\]
and
\[
T_2=T((u_1,v_1),(q,p)).
\]
Since \((u_1,v_1)=(p,q-p)\), we obtain
\[
T_1=T((p,q-p),(q,p))=T_2.
\]

Now assume \(n\ge1\). Substituting the identities from case (2) into the
formulas gives
\[
(p,q-u_m)=(p,q-p)=(u_m,v_m),
\]
and the remaining pairs agree term by term:
\[
(p+v_m,u_m-u_{m-1})=(q,p-r_n),
\]
then
\[
(p+v_m+v_{m-1},u_{m-1}-u_{m-2})
=
(q+s_n,r_n-r_{n-1}),
\]
and so on. Hence \(T_1=T_2\).

This proves the proposition.
\end{proof}

\begin{lemma}\label{lem:crossing-number-positive-same-strands}
Let \(\beta\) and \(\beta'\) be positive braid representatives of the same link.
If \(\beta\) and \(\beta'\) have the same number of strands, then they have the
same number of crossings.
\end{lemma}

\begin{proof}
Let \(L\) be the link represented by both braids, and let \(\mu\) be the number
of components of \(L\). For a positive braid representative of \(L\) with
\(p\) strands and \(c\) crossings, the genus of \(L\) is given by
\[
g(L)=\frac{c-p-\mu+2}{2},
\]
by Stallings' theorem \cite{Genus}. Since the genus is a link invariant, and since \(\beta\) and \(\beta'\) have the
same number of strands, it follows that they have the same number of crossings.
\end{proof}

We first make a simple convention.  If the last exponent in a T-link
presentation is equal to \(1\), then the last block can be removed by a
destabilization.  Indeed,
\[
T((r_1,s_1),\dots,(r_{n-1},s_{n-1}),(r_n,1))
\]
is equivalent to
\[
T((r_1,s_1),\dots,(r_{n-1},s_{n-1}+1)).
\]
Thus, we may assume that the last exponent
satisfies
\[
s_n>1.
\]

\begin{proposition}\label{prop:unique-T-presentation-torus}
For each \(p\ge 2\), the link
\[
L_p=V((p,p))
\]
has a unique simplified T-link presentation, namely
\[
T((p,p)).
\]
\end{proposition}

\begin{proof}
The V-link
\[
V((p,p))
\]
is represented by the positive braid
\[
(\sigma_1\sigma_2\cdots\sigma_{p-1})^p,
\]
which is the positive full twist on \(p\) strands. This braid has \(p\) strands
and
\[
p(p-1)
\]
crossings.

Suppose that the same link admits another V-link presentation
\[
V((u_1,\overline{v_1}),\dots,(u_m,\overline{v_m}),
(r_1,s_1),\dots,(r_n,s_n),(p',q')).
\]
By Theorem~\ref{equivalenceTV-links}, V-link braids are minimal braid
representatives of the corresponding T-links. Hence this new V-link braid is
also a minimal braid representative of the same link. Therefore it has the same
braid index as \(V((p,p))\), and so
\[
p'=p.
\]

Moreover, since both braids are positive with the same number of strands representing the same
link, Lemma~\ref{lem:crossing-number-positive-same-strands} implies that they have
the same number of crossings. Hence the new V-link presentation also has exactly
\[
p(p-1)
\]
crossings.

By the definition of a V-link, the final block \((p,q')\) satisfies
\[
q'\ge p.
\]
Thus the final block alone contributes at least
\[
p(p-1)
\]
crossings. Since the total number of crossings is exactly \(p(p-1)\), no
crossings can occur in any of the preceding blocks. Therefore the overlined and
non-overlined blocks must be empty:
\[
m=n=0.
\]
Also, the final block must contribute exactly \(p(p-1)\) crossings, and hence
\[
q'=p.
\]
Therefore the only possible V-link presentation is
\[
V((p,p)).
\]

Now let \(T\) be any simplified T-link presentation of \(L_p\). By
Theorem~\ref{equivalenceTV-links}, \(T\) determines a V-link presentation of
\(L_p\). Since the V-link presentation is unique, this V-link must be
\[
V((p,p)).
\]
Applying Corollary~\ref{cor:two-T-links-same} to \(V((p,p))\), the two
associated T-link presentations are both
\[
T((p,p)).
\]
Therefore
\[
T=T((p,p)).
\]
Thus \(L_p\) has a unique simplified T-link presentation.
\end{proof}

\begin{proposition}\label{prop:unique-V-presentation-general-r}
Let \(r\ge 2\) and \(p\ge 2r\). Let
\[
L_{r,p}=V((r,\overline{1}),(r,1),(p,p)).
\]
Then \(L_{r,p}\) has a unique V-link presentation. Consequently, the two
T-link presentations obtained from Corollary~\ref{cor:two-T-links-same}
coincide, and the corresponding simplified T-link presentation is
\[
T((r,1),(p,p-r),(p+1,r)).
\]
\end{proposition}

\begin{proof}
The V-link
\[
L_{r,p}=V((r,\overline{1}),(r,1),(p,p))
\]
is represented by the positive braid
\[
(\sigma_{p-1}\sigma_{p-2}\cdots\sigma_{p-r+1})
(\sigma_1\sigma_2\cdots\sigma_{r-1})
(\sigma_1\sigma_2\cdots\sigma_{p-1})^p .
\]
This braid has \(p\) strands. The final factor is the positive full twist on
\(p\) strands and contributes
\[
p(p-1)
\]
crossings. The two remaining factors contribute
\[
(r-1)+(r-1)=2(r-1)
\]
crossings. Hence this braid has
\[
p(p-1)+2(r-1)
\]
crossings.

Suppose that \(L_{r,p}\) admits another V-link presentation
\[
V((u_1,\overline{v_1}),\dots,(u_m,\overline{v_m}),
(r_1,s_1),\dots,(r_n,s_n),(p',q')).
\]
By Theorem~\ref{equivalenceTV-links}, V-link braids realize the braid index of
the corresponding T-links. Therefore this new V-link braid has the same number
of strands as the braid above, and hence
\[
p'=p.
\]

Moreover, since both braids are positive with the same number of strands and
represent the same link, Lemma~\ref{lem:crossing-number-positive-same-strands}
implies that they have the same number of crossings. Thus this new V-link
presentation also has
\[
p(p-1)+2(r-1)
\]
crossings.

The final block \((p,q')\) contributes \(q'(p-1)\) crossings. Since, by the
definition of V-links,
\[
q'\ge p,
\]
this contribution is at least
\[
p(p-1).
\]
If \(q'>p\), then \(q'\ge p+1\), and the final block alone would contribute at
least
\[
(p+1)(p-1)=p(p-1)+(p-1)
\]
crossings. Since \(p\ge 2r\), we have
\[
p-1>2(r-1),
\]
so this would exceed the total number
\[
p(p-1)+2(r-1).
\]
Therefore
\[
q'=p.
\]
It follows that the blocks before the final full twist contribute exactly
\[
2(r-1)
\]
crossings.

We now analyze the components before the full twist. In the original
presentation, the pre-full-twist braid is
\[
(\sigma_{p-1}\cdots\sigma_{p-r+1})
(\sigma_1\cdots\sigma_{r-1}).
\]
Since \(p\ge 2r\), these two factors act on disjoint sets of strands. They
create two components supported on \(r\) strands each: one on the initial
\(r\) strands and one on the terminal \(r\) strands. All remaining components
are supported on single strands. After the final full twist is added, each of
these two \(r\)-strand components is represented by a positive \(r\)-strand
braid containing a full twist. Hence, by \cite[Corollary~2.4]{Franks}, each of
these two components has braid index \(r\).

Now consider the same two components in the alternative V-link presentation.
Suppose that they are supported on \(a\) and \(b\) strands, respectively. 
Again, by \cite[Corollary~2.4]{Franks}, the number of
strands supporting each component realizes its braid index. Since both
components have braid index \(r\), we obtain
\[
a=r
\qquad\text{and}\qquad
b=r.
\]

We also use their linking number. In the original presentation, these two
components do not interact before the final full twist, because their supports
are disjoint. Thus their linking number comes only from the final full twist.
The full twist makes every strand of one component wind once positively around
every strand of the other component.  Equivalently, for each pair consisting of
one strand from the first component and one strand from the second, the full
twist contributes two positive crossings.  Since the linking number is one half
of the signed number of crossings between the two components, each such pair of
strands contributes \(1\) to the linking number.  There are \(r^2\) such pairs
of strands.  Hence the full twist contributes
\[
r^2
\]
to the linking number between the two components.

In the alternative V-link presentation, the same two components are again
supported on \(r\) strands each. The final full twist already contributes
\[
r^2
\]
to their linking number. Since their linking number is an invariant of the
oriented link and is equal to \(r^2\), there can be no crossings between these
two components before the final full twist. Indeed, all crossings in a V-link
braid are positive, so any such crossing would increase their linking number.

Thus the crossings before the final full twist split into two disjoint groups:
one group creates one of the \(r\)-strand components, and the other group
creates the other \(r\)-strand component. Since the total number of crossings
before the final full twist is exactly
\[
2(r-1),
\]
and a component supported on \(r\) strands requires at least \(r-1\) crossings
to be created, each group contains exactly \(r-1\) crossings. Hence each
\(r\)-strand component is created minimally.

In a V-link presentation, the only way to create an \(r\)-strand component with
exactly \(r-1\) crossings is by a single block with second coordinate \(1\).
Thus, after possibly relabelling the two \(r\)-strand components, one of them
is created by the block
\[
(r,1),
\]
and the other is created by the block
\[
(r,\overline{1}).
\]
The two components cannot both be created on the non-overlined side, because
non-overlined blocks are supported on nested initial intervals of strands.
Similarly, they cannot both be created on the overlined side, because overlined
blocks are supported on nested terminal intervals of strands. Since the two
components are disjoint and \(p\ge 2r\), one must be created on the
non-overlined side and the other on the overlined side. Therefore the only
possible V-link presentation is
\[
V((r,\overline{1}),(r,1),(p,p)).
\]
This proves uniqueness of the V-link presentation.

Finally, applying Corollary~\ref{cor:two-T-links-same} to this unique V-link
presentation gives
\[
T((r,1),(p,p-r),(p+1,r))
\]
from both sides of the correspondence. Indeed, the two sides have the same
block \((r,1)\), and the final block is \((p,p)\). Hence the two T-link
presentations coincide, and the corresponding simplified T-link presentation is
\[
T((r,1),(p,p-r),(p+1,r)). \qedhere
\] 
\end{proof}

\begin{theorem}\label{thm:unique-T-link-presentations}
The following families give links with unique T-link presentations.

\begin{enumerate}
\item For each \(p\ge 2\), the T-link
\[
T((p,p))
\]
has a unique T-link presentation.

\item Let \(r\ge 2\) and \(p\ge 2r\). The T-link
\[
T((r,1),(p,p-r),(p+1,r))
\]
has a unique T-link presentation.
\end{enumerate}
\end{theorem}

\begin{proof}
The first statement follows from
Proposition~\ref{prop:unique-T-presentation-torus} and the second statement follows Proposition~\ref{prop:unique-V-presentation-general-r}.
\end{proof}

Every V-link has a final block \((p,Q)\) with \(Q\ge p\).  Writing
\[
Q=kp+q,\qquad k\ge1,\qquad 0\le q<p,
\]
we obtain the following equivalence.

\begin{theorem}\label{thm:rotate-V-link}
Let \(k\ge1\) and \(0\le q<p\). Then the two V-links
\[
V((u_1,\overline{v_1}), \dots, (u_m,\overline{v_m}),
(r_1,s_1), \dots, (r_n,s_n), (p,kp+q))
\]
and
\[
V((r_1,\overline{s_1}), \dots, (r_n,\overline{s_n}),
(p,\overline q),
(u_1,v_1), \dots, (u_m,v_m), (p,kp))
\]
represent the same link in \(S^3\), where the pair \((p,\overline q)\) is
omitted if \(q=0\).
\end{theorem}

\begin{proof}
The V-link
\[
V((u_1,\overline{v_1}), \dots, (u_m,\overline{v_m}), (r_1,s_1), \dots, (r_n,s_n), (p,kp+q))
\]
is given by the braid
\begin{align*}
B={}&(\sigma_{p-1}\sigma_{p-2}\dots\sigma_{{p-u_1+1}})^{v_1}
\dots
(\sigma_{p-1}\sigma_{p-2}\dots\sigma_{{p-u_m+1}})^{v_m}
(\sigma_1\sigma_2\dots\sigma_{r_1-1})^{s_1}
\dots\\
&(\sigma_1\sigma_2\dots\sigma_{r_n-1})^{s_n}(\sigma_1\sigma_2\dots\sigma_{{p-1}})^{kp+q}.
\end{align*}
By conjugating the braid, its closure is equivalent to the closure of the braid
\begin{align*}
B_1={}&(\sigma_1\sigma_2\dots\sigma_{r_1-1})^{s_1}
\dots
(\sigma_1\sigma_2\dots\sigma_{r_n-1})^{s_n}
(\sigma_1\sigma_2\dots\sigma_{{p-1}})^{kp+q}
(\sigma_{p-1}\sigma_{p-2}\dots\sigma_{{p-u_1+1}})^{v_1}
\dots\\
&(\sigma_{p-1}\sigma_{p-2}\dots\sigma_{{p-u_m+1}})^{v_m}.
\end{align*}
The braid \(B_1\) is a braid with \(p\) strands, and
\[
(\sigma_1\sigma_2\dots\sigma_{{p-1}})^{kp}
\]
is the \(k\)-th power of the positive full twist on \(p\) strands. Since the full twist is central in the braid group \(B_p\), the closure of \(B_1\) is equivalent to the closure of the braid
\begin{align*}
B_2={}&(\sigma_1\sigma_2\dots\sigma_{r_1-1})^{s_1}
\dots
(\sigma_1\sigma_2\dots\sigma_{r_n-1})^{s_n}
(\sigma_1\sigma_2\dots\sigma_{{p-1}})^q
(\sigma_{p-1}\sigma_{p-2}\dots\sigma_{{p-u_1+1}})^{v_1}
\dots\\
&(\sigma_{p-1}\sigma_{p-2}\dots\sigma_{{p-u_m+1}})^{v_m}
(\sigma_1\sigma_2\dots\sigma_{{p-1}})^{kp}.
\end{align*}
Moreover,
\[
(\sigma_1\sigma_2\dots\sigma_{{p-1}})^{kp}
=
(\sigma_{p-1}\sigma_{p-2}\dots\sigma_1)^{kp},
\]
since both are the \(k\)-th power of the positive full twist on \(p\) strands. Hence the closure of \(B_2\) is equivalent to the closure of
\begin{align*}
B_3={}&(\sigma_1\sigma_2\dots\sigma_{r_1-1})^{s_1}
\dots
(\sigma_1\sigma_2\dots\sigma_{r_n-1})^{s_n}
(\sigma_1\sigma_2\dots\sigma_{{p-1}})^q
(\sigma_{p-1}\sigma_{p-2}\dots\sigma_{{p-u_1+1}})^{v_1}
\dots\\
&(\sigma_{p-1}\sigma_{p-2}\dots\sigma_{{p-u_m+1}})^{v_m}
(\sigma_{p-1}\sigma_{p-2}\dots\sigma_1)^{kp}.
\end{align*}
By Lemma~\ref{otherbraid}, this braid represents the V-link
\[
V((r_1,\overline{s_1}), \dots, (r_n,\overline{s_n}), (p,\overline{q}), (u_1,v_1), \dots, (u_m,v_m), (p,kp)).
\]
Therefore, the two V-links represent the same link in \(S^3\).
\end{proof}

\begin{corollary}\label{cor:four-T-link-presentations}
Let \(0\le q<p\) and \(k>0\). Consider the V-link
\[
L=
V((u_1,\overline{v_1}),\dots,(u_m,\overline{v_m}),
(r_1,s_1),\dots,(r_n,s_n),(p,kp+q)),
\]
where the overlined and non-overlined blocks are allowed to be empty. Then the
four T-link presentations obtained from \(L\) and from its rotated V-link
represent the same link in \(S^3\).

When \(m,n\ge 1\), these four presentations are
\begin{align*}
T_1
={}&T\big((r_1,s_1),\dots,(r_n,s_n),
(p,kp+q-u_m),                                      \\
&\hspace{1.4cm}
(p+v_m,u_m-u_{m-1}),\dots,
(p+v_m+\cdots+v_2,u_2-u_1),                       \\
&\hspace{1.4cm}
(p+v_m+\cdots+v_1,u_1)\big),                      \\[0.5em]
T_2
={}&T\big((u_1,v_1),\dots,(u_m,v_m),
(kp+q,p-r_n),                                      \\
&\hspace{1.4cm}
(kp+q+s_n,r_n-r_{n-1}),\dots,
(kp+q+s_n+\cdots+s_2,r_2-r_1),                    \\
&\hspace{1.4cm}
(kp+q+s_n+\cdots+s_1,r_1)\big),                   \\[0.5em]
T_3
={}&T\big((u_1,v_1),\dots,(u_m,v_m),
(p,(k-1)p),
(p+q,p-r_n),                                      \\
&\hspace{1.4cm}
(p+q+s_n,r_n-r_{n-1}),\dots,
(p+q+s_n+\cdots+s_2,r_2-r_1),                     \\
&\hspace{1.4cm}
(p+q+s_n+\cdots+s_1,r_1)\big),                    \\[0.5em]
T_4
={}&T\big((r_1,s_1),\dots,(r_n,s_n),
(p,q),
(kp,p-u_m),                                       \\
&\hspace{1.4cm}
(kp+v_m,u_m-u_{m-1}),\dots,
(kp+v_m+\cdots+v_2,u_2-u_1),                      \\
&\hspace{1.4cm}
(kp+v_m+\cdots+v_1,u_1)\big).
\end{align*}

If \(m=0\) or \(n=0\), the formulas are interpreted by applying
Corollary~\ref{cor:two-T-links-same} directly to the relevant V-link and
omitting the terms corresponding to the empty block. In all cases, the standard
simplifications are applied: pairs with second coordinate zero are omitted, and
consecutive pairs with the same first coordinate are combined by adding their
second coordinates.
\end{corollary}

\begin{proof}
By Theorem~\ref{thm:rotate-V-link}, the V-link
\[
V((u_1,\overline{v_1}),\dots,(u_m,\overline{v_m}),
(r_1,s_1),\dots,(r_n,s_n),(p,kp+q))
\]
is equivalent to
\[
V((r_1,\overline{s_1}),\dots,(r_n,\overline{s_n}),
(p,\overline q),(u_1,v_1),\dots,(u_m,v_m),(p,kp)),
\]
where the pair \((p,\overline q)\) is omitted when \(q=0\). Hence these two
V-links represent the same link \(L\) in \(S^3\).

Applying Corollary~\ref{cor:two-T-links-same} to the original V-link gives the
two T-link presentations \(T_1\) and \(T_2\). Applying the same corollary to the
rotated V-link gives the two T-link presentations \(T_3\) and \(T_4\). Therefore
all four presentations represent the same link \(L\).

When both sides are non-empty, the explicit formulas above are obtained by
substituting the parameters of the original and rotated V-links into
Corollary~\ref{cor:two-T-links-same}. If one side is empty, the same substitution
is made after omitting the corresponding empty block. Finally, omitting
zero-exponent pairs and combining consecutive pairs with the same first
coordinate are the standard simplifications of T-link presentations and do not
change the represented link.
\end{proof}

The next result gives simple sufficient conditions under which the four
T-link presentations in Corollary~\ref{cor:four-T-link-presentations} are
genuinely distinct after the standard simplifications.

\begin{theorem}\label{thm:four-distinct-T-link-presentations}
Let
\[
L=
V((u_1,\overline{v_1}),\dots,(u_m,\overline{v_m}),
(r_1,s_1),\dots,(r_n,s_n),(p,kp+q)),
\]
where the overlined and non-overlined blocks are allowed to be empty,
\(0\le q<p\), and \(k>1\). Then the four T-link presentations in
Corollary~\ref{cor:four-T-link-presentations} are distinct under any of the
following conditions:

\begin{enumerate}
\item \(m,n\ge 1\), \(0<q<p\), \(u_m<p\), and the ordered blocks
\[
((r_1,s_1),\dots,(r_n,s_n))
\quad\text{and}\quad
((u_1,v_1),\dots,(u_m,v_m))
\]
are distinct;

\item \(m=0\), \(n\ge 1\);

\item \(n=0\), \(m\ge 1\), \(0<q<p\), and \(u_m<p\).
\end{enumerate}
\end{theorem}

\begin{proof}
By Corollary~\ref{cor:four-T-link-presentations}, the four T-link
presentations \(T_1,T_2,T_3,T_4\) represent the same link \(L\), after the
standard simplifications. We prove that they are pairwise distinct in each of
the three cases.

Assume first that \(m,n\ge1\). Under the hypotheses
\[
0<q<p,\qquad k>1,\qquad u_m<p,
\]
the pairs
\[
(p,q),\qquad (p,(k-1)p),\qquad (kp,p-u_m)
\]
all have positive second coordinate and are not removed. Moreover, \(u_m<p\)
prevents the pair \((p,(k-1)p)\) in \(T_3\) from being combined with the
preceding pair \((u_m,v_m)\), and \(k>1\) prevents the pairs \((p,q)\) and
\((kp,p-u_m)\) in \(T_4\) from being combined. Hence \(T_1,T_2\) have
\(m+n+1\) parameter pairs, whereas \(T_3,T_4\) have \(m+n+2\) parameter pairs.
Thus no presentation among \(T_1,T_2\) can coincide with one among
\(T_3,T_4\).

It remains to distinguish \(T_1\) from \(T_2\), and \(T_3\) from \(T_4\).
The presentations \(T_1\) and \(T_2\) begin respectively with the ordered
blocks
\[
((r_1,s_1),\dots,(r_n,s_n))
\quad\text{and}\quad
((u_1,v_1),\dots,(u_m,v_m)).
\]
These blocks are distinct by hypothesis, so \(T_1\neq T_2\). 
Similarly, \(T_3\) begins with the block
\[
((u_1,v_1),\dots,(u_m,v_m)),
\]
whereas \(T_4\) begins with
\[
((r_1,s_1),\dots,(r_n,s_n)).
\]
Since these ordered blocks are distinct, \(T_3\neq T_4\).
Thus the four presentations are distinct in
case (1).

Now assume that \(m=0\) and \(n\ge1\). Then the four presentations reduce to
\[
T_1=
T((r_1,s_1),\dots,(r_n,s_n),(p,kp+q)),
\]
\[
T_2=
T((kp+q,p-r_n),(kp+q+s_n,r_n-r_{n-1}),\dots,
(kp+q+s_n+\cdots+s_1,r_1)),
\]
\[
T_3=
T((p,(k-1)p),(p+q,p-r_n),
(p+q+s_n,r_n-r_{n-1}),\dots,
(p+q+s_n+\cdots+s_1,r_1)),
\]
and
\[
T_4=
T((r_1,s_1),\dots,(r_n,s_n),(p,q),(kp,p)),
\]
with the usual simplifications when \(q=0\). Since \(k>1\), we have
\(kp+q\ge 2p\). Thus \(T_2\) begins with first coordinate \(kp+q>p\), \(T_3\)
begins with first coordinate \(p\), and \(T_1\) begins with first coordinate
\(r_1<p\). Also \(T_4\) begins with the same initial block as \(T_1\), but
after this block \(T_1\) has the pair \((p,kp+q)\), whereas \(T_4\) has either
\((p,q),(kp,p)\) if \(q>0\), or \((kp,p)\) if \(q=0\). In either case this is
different from \((p,kp+q)\). Hence the four presentations are pairwise
distinct in case (2).

Finally assume that \(n=0\), \(m\ge1\), \(0<q<p\), and \(u_m<p\). Then the four
presentations reduce to
\[
T_1=
T((p,kp+q-u_m),
(p+v_m,u_m-u_{m-1}),\dots,
(p+v_m+\cdots+v_1,u_1)),
\]
\[
T_2=
T((u_1,v_1),\dots,(u_m,v_m),(kp+q,p)),
\]
\[
T_3=
T((u_1,v_1),\dots,(u_m,v_m),(p,(k-1)p),(p+q,p)),
\]
and
\[
T_4=
T((p,q),(kp,p-u_m),
(kp+v_m,u_m-u_{m-1}),\dots,
(kp+v_m+\cdots+v_1,u_1)).
\]
The assumptions \(0<q<p\), \(k>1\), and \(u_m<p\) ensure that the pairs
\[
(p,q),\qquad (p,(k-1)p),\qquad (kp,p-u_m)
\]
are not omitted and are not combined in the relevant places. Therefore
\(T_1,T_2\) have \(m+1\) parameter pairs, while \(T_3,T_4\) have \(m+2\)
parameter pairs. Hence no presentation among \(T_1,T_2\) can coincide with one
among \(T_3,T_4\).

Moreover, \(T_1\) begins with first coordinate \(p\), while \(T_2\) begins with
first coordinate \(u_1<p\), since \(u_m<p\). Thus \(T_1\neq T_2\). Similarly,
\(T_3\) begins with first coordinate \(u_1<p\), while \(T_4\) begins with first
coordinate \(p\). Thus \(T_3\neq T_4\). Hence the four presentations are
pairwise distinct in case (3).
\end{proof}

\begin{remark}
The hypotheses in Theorem~\ref{thm:four-distinct-T-link-presentations}
are sufficient but not necessary. They are imposed only to avoid the obvious
degeneracies in Corollary~\ref{cor:four-T-link-presentations}. In special
cases, one may still obtain three or four distinct T-link presentations even if
some of these hypotheses fail.
\end{remark}

\begin{remark}
The exceptional cases in
Proposition~\ref{prop:when-two-T-presentations-coincide} only describe when the
two T-link presentations associated to a single V-link coincide. They do not
give a complete description of the possible coincidences among the four
presentations in Corollary~\ref{cor:four-T-link-presentations}. In fact, even
when \(T_1=T_2\), the presentations coming from the rotated V-link may still be
new. For example, if \(Q=kp+q\), with \(k>1\) and \(0<q<p\), then
\[
L=V((p,\overline{Q-p}),(p,Q))
\]
satisfies
\[
T_1=T_2=T((p,Q-p),(Q,p)),
\]
whereas the rotated presentation gives
\[
T_3=T((p,2(k-1)p+q),(p+q,p)),
\]
after simplification. Hence \(T_3\neq T_1\).
\end{remark}

\begin{theorem}\label{thm:many-V-presentations-general}
Let \(k>1\) be an integer and let \(0\le q<p\). Consider the V-link
\[
L=
V((u_1,\overline{v_1}), \dots, (u_m,\overline{v_m}),
(r_1,s_1), \dots, (r_n,s_n), (p,kp+q)).
\]
Then \(L\) admits at least \(k-1\) distinct V-link presentations. More
precisely, for each \(a=1,\dots,k-1\), \(L\) is equivalent to the V-link
\[
V((r_1,\overline{s_1}),\dots,(r_n,\overline{s_n}),
(p,\overline{ap+q}),
(u_1,v_1),\dots,(u_m,v_m),(p,(k-a)p)),
\]
after the standard simplifications of combining consecutive blocks on the same
side with the same strand number.
\end{theorem}

\begin{proof}
The proof is the same full-twist redistribution argument used in
Theorem~\ref{thm:rotate-V-link}. For each \(a=1,\dots,k-1\), write
\[
kp+q=(ap+q)+(k-a)p.
\]
The factor
\[
(\sigma_1\cdots\sigma_{p-1})^{(k-a)p}
\]
is a power of the positive full twist on \(p\) strands, and hence is central in
\(B_p\). Therefore it can be moved through the braid without changing the
closure.

Using the same conjugation and centrality argument as in
Theorem~\ref{thm:rotate-V-link}, the part \(ap+q\) is transferred to the
overlined side, while the remaining \((k-a)p\) remains as the final full-twist
block. This gives the V-link presentation
\[
V((r_1,\overline{s_1}),\dots,(r_n,\overline{s_n}),
(p,\overline{ap+q}),
(u_1,v_1),\dots,(u_m,v_m),(p,(k-a)p)),
\]
after the standard simplifications.

For different values of \(a\), the overlined block with strand number \(p\) has
different exponent \(ap+q\). Since \(1\le a\le k-1\), these exponents are
positive and distinct. Moreover, this overlined block is not removed by any
standard simplification. Hence the resulting V-link presentations are distinct.
Therefore \(L\) has at least \(k-1\) distinct V-link presentations.
\end{proof}

\begin{remark}
Corollary~\ref{cor:two-T-links-same} applied to the V-link presentations
produced by Theorem~\ref{thm:many-V-presentations-general}, also gives many
T-link presentations of the same link.  The exact number of distinct T-link
presentations depends on the parameters, since some of the resulting
presentations may coincide or may simplify to the same parameter list.
Nevertheless, the number of presentations produced by this construction grows
with the number of full twists in the final V-link block.

This illustrates one advantage of the V-link language.  In a V-link
presentation, the source of this non-uniqueness in Theorem~\ref{thm:many-V-presentations-general} is visible: it comes from
redistributing the full-twist part of the final block \((p,kp+q)\).  In the
corresponding T-link presentations, the same non-uniqueness is encoded more
indirectly in the parameters and is therefore harder to detect.
\end{remark}

\begin{theorem}\label{thm:fixed-braid-index-many-presentations}
Fix an integer \(p\ge 3\). For every \(k\ge 1\), there exists a V-link \(L_k\)
with braid index \(p\) such that \(L_k\) has at least \(k\) distinct V-link
presentations. Moreover, \(L_k\) has at least \(2k\) distinct T-link
presentations.
\end{theorem}

\begin{proof}
Fix \(p\ge 3\) and \(k\ge 1\), and consider
\[
L_k=V((2,1),(p,kp+1)).
\]
This link is represented by the braid-index-realizing positive braid
\[
\sigma_1(\sigma_1\sigma_2\cdots\sigma_{p-1})^{kp+1}.
\]
Hence
\[
\operatorname{br}(L_k)=p.
\]

For each \(a=0,\dots,k-1\), write
\[
kp+1=(ap+1)+(k-a)p.
\]
For \(a=0\), Theorem~\ref{thm:rotate-V-link} gives the V-link presentation
\[
V_0=
V((2,\overline{1}),(p,\overline{1}),(p,kp)).
\]
For \(a=1,\dots,k-1\), Theorem~\ref{thm:many-V-presentations-general} gives
the V-link presentation
\[
V_a=
V((2,\overline{1}),(p,\overline{ap+1}),(p,(k-a)p)).
\]
Thus, for every \(a=0,\dots,k-1\), the link \(L_k\) admits the V-link
presentation
\[
V_a=
V((2,\overline{1}),(p,\overline{ap+1}),(p,(k-a)p)).
\]
These \(k\) V-link presentations are distinct, because the overlined parameter
\[
(p,\overline{ap+1})
\]
depends on \(a\). Therefore \(L_k\) has at least \(k\) distinct V-link
presentations.

Now apply Corollary~\ref{cor:two-T-links-same} to each \(V_a\). Since
\[
V_a=
V((2,\overline{1}),(p,\overline{ap+1}),(p,(k-a)p)),
\]
the overlined side has parameters
\[
(u_1,v_1)=(2,1),
\qquad
(u_2,v_2)=(p,ap+1),
\]
and the non-overlined side is empty. Hence the first T-link presentation
associated to \(V_a\) is
\[
T_a^{(1)}
=
T((p,(k-a-1)p),(p+ap+1,p-2),(p+ap+2,2)),
\]
where the pair \((p,(k-a-1)p)\) is omitted when \(a=k-1\).

The second associated T-link presentation is
\[
T_a^{(2)}
=
T((2,1),(p,ap+1),((k-a)p,p)).
\]
Standard simplifications are applied throughout. In particular, when
\(a=k-1\), the last pair in \(T_a^{(2)}\) is \((p,p)\), so it is combined with
the preceding pair \((p,(k-1)p+1)\).

We now show that the resulting \(2k\) T-link presentations are pairwise
distinct. First, the presentations \(T_a^{(1)}\) are distinct as \(a\) varies:
if \(a<k-1\), the second pair has first coordinate
\[
p+ap+1,
\]
and if \(a=k-1\), the first displayed pair is omitted but the resulting first
pair still has first coordinate
\[
p+(k-1)p+1.
\]
Thus the strand number \(p+ap+1\) distinguishes the presentations
\(T_a^{(1)}\).

Next, the presentations \(T_a^{(2)}\) are distinct as \(a\) varies. For
\(a<k-1\), no simplification combines the pair \((p,ap+1)\) with the last pair,
because
\[
(k-a)p>p.
\]
Thus the second pair is \((p,ap+1)\). For \(a=k-1\), the last pair is
\[
((k-a)p,p)=(p,p),
\]
and the standard simplification gives
\[
T_{k-1}^{(2)}=T((2,1),(p,kp+1)).
\]
Hence the second pair has exponent \(ap+1\) for \(a<k-1\), and exponent
\(kp+1\) for \(a=k-1\). These exponents are all distinct, so the presentations
\(T_a^{(2)}\) are pairwise distinct.

Finally, no presentation \(T_a^{(1)}\) is equal to a presentation
\(T_b^{(2)}\). Indeed, after the standard simplifications, every
\(T_b^{(2)}\) begins with the pair
\[
(2,1),
\]
whereas \(T_a^{(1)}\) begins either with a pair whose first coordinate is \(p\),
if \(a<k-1\), or with the pair
\[
(p+ap+1,p-2),
\]
if \(a=k-1\). In both cases, since \(p\ge 3\), the first coordinate of the first
pair of \(T_a^{(1)}\) is strictly larger than \(2\). Therefore
\[
T_a^{(1)}\neq T_b^{(2)}
\]
for all \(a,b\).

Thus the \(2k\) T-link presentations
\[
T_a^{(1)},\ T_a^{(2)}
\qquad
(a=0,\dots,k-1)
\]
are pairwise distinct. Hence \(L_k\) has at least \(2k\) distinct T-link
presentations.
\end{proof}

The V-link description makes the non-uniqueness of T-link presentations
more transparent.  From the point of view of a T-link presentation, it is not
always clear how many different parameter sequences can represent the same
link.  In contrast, in a V-link presentation with final block \((p,kp+q)\), the
integer \(k\) records the number of full twists on \(p\) strands.  Since full
twists are central in the braid group, they can be separated in different ways,
and each separation gives a new V-link presentation of the same link.  Applying
the correspondence between V-links and T-links then gives new T-link
presentations of the same link.

\section{Satellite V-links and T-links}\label{Satellite}\label{section6}

In this section we give conditions on the parameters of V-links to yield satellite links. We then apply these conditions to find new families of satellite T-links. 

\begin{theorem}\label{satellitecaseone}
Let $r_1, \dots, r_n, s_1, \dots, s_n, u_1, \dots, u_m, v_1, \dots, v_m, p, q$ be positive integers such that $2\leq r_1< \dots < r_{n} < p$, $2\leq u_1< \dots < u_{m} \leq p\leq q$.
Suppose that there are positive numbers $i, j,$ and $d>1$ such that 
$r_1, \dots, r_i, u_1, \dots, u_j$ are less than or equal to $d$ and 
$r_{i+1}, \dots, r_n, s_{i+1}, \dots, s_n, u_{j+1}, \dots, u_m, v_{j+1}, \dots, v_m, p, q$ are multiples of and greater than $d$. Then, the V-link  
\begin{align*}
&V_1 = V((u_1,\overline{v_1}), \dots, (u_{j},\overline{v_{j}}), (u_{j+1},\overline{v_{j+1}}), \dots, (u_{m},\overline{v_{m}}), (r_1,s_1), \dots, (r_{i},s_{i}), (r_{i+1},s_{i+1}), \dots,\\
&(r_{n},s_{n}), (p, q))
\end{align*}
is the satellite link with companion 
$$C = V((u_{j+1}/d, \overline{v_{j+1}}/d), \dots, (u_{m}/d,\overline{v_{m}}/d), (r_{i+1}/d, s_{i+1}/d), \dots, (r_{n}/d, s_{n}/d), (p/d, q/d))$$
such that if $C$ is a knot then the pattern is the V-link given by the braid
$$(\sigma_{d-1}\dots\sigma_{{d-u_1+1}})^{v_1}\dots(\sigma_{d-1}\dots\sigma_{{d-u_j+1}})^{v_j}(\sigma_1\dots\sigma_{r_1-1})^{s_1}\dots(\sigma_1\dots\sigma_{r_i-1})^{s_i}(\sigma_{1}\dots\sigma_{{d-1}})^{d\eta}$$
where $\eta = \sum_{a=i+1}^{n}r_{a}s_{a}/d^2 + \sum_{b=j+1}^{m}u_{a}v_{b}/d^2 + pq/d^2.$
\end{theorem}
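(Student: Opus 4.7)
The plan is to exhibit an essential torus in the complement of $V_{1}$ that realizes the claimed satellite structure, with the torus bounding a solid tubular neighborhood of $C$ in which $V_{1}$ sits as a $d$-strand pattern.

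First I partition the $p$ strands of the V-link braid into $p/d$ consecutive bundles of $d$ strands each. Under the hypotheses, the small left sub-braids $(\sigma_{1}\dots\sigma_{r_{a}-1})^{s_{a}}$ with $a\leq i$ act entirely within the leftmost bundle (since $r_{a}\leq d$), and the small right sub-braids $(\sigma_{p-1}\dots\sigma_{p-u_{b}+1})^{v_{b}}$ with $b\leq j$ act entirely within the rightmost bundle (since $u_{b}\leq d$). All remaining factors are \emph{large} in the sense that their generator indices and exponents are multiples of $d$; these factors respect the bundle structure in the sense that they can be interpreted as $d$-parallel cables of smaller braids acting on bundles.

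The crucial step is to rewrite each large factor as a cable of the corresponding factor in the companion braid plus internal twisting. The basic cable identity to establish is: for $r=dr'$ and $s=ds'$, the braid $(\sigma_{1}\dots\sigma_{r-1})^{s}$ is isotopic, as an embedded braid inside a solid torus whose boundary groups its $r$ strands into $r'$ bundles of $d$, to the $d$-parallel cable of $(\sigma_{1}\dots\sigma_{r'-1})^{s'}$ together with $r's'=rs/d^{2}$ full twists on $d$ strands. This is the classical fact that $T(r,s)=T(dr',ds')$ is the $d$-cable of $T(r',s')$ with framing $r's'$. The analogous identities hold for each large right factor $(\sigma_{p-1}\dots\sigma_{p-u_{b}+1})^{v_{b}}$ (contributing $u_{b}v_{b}/d^{2}$ full twists per bundle) and for the central torus factor $(\sigma_{1}\dots\sigma_{p-1})^{q}$ (contributing $pq/d^{2}$ full twists per bundle). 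Summing these contributions recovers exactly $\eta$.

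After these rewritings, $V_{1}$ is realized as a $d$-cable of the companion V-link braid, which is obtained by dividing every parameter by $d$, and which is by definition $C$; inside the pattern solid torus this cable is decorated by the small left and small right sub-braids and by the accumulated full twists. The boundary of a tubular neighborhood of $C$ then serves as the essential torus, and tracing through the order of appearance of the factors in the V-link word shows that, when $C$ is a knot, the pattern braid on $d$ strands is precisely
$$(\sigma_{d-1}\dots\sigma_{d-u_{1}+1})^{v_{1}}\dots(\sigma_{d-1}\dots\sigma_{d-u_{j}+1})^{v_{j}}(\sigma_{1}\dots\sigma_{r_{1}-1})^{s_{1}}\dots(\sigma_{1}\dots\sigma_{r_{i}-1})^{s_{i}}(\sigma_{1}\dots\sigma_{d-1})^{d\eta},$$
with the accumulated twists collected at the end, which is legitimate because full twists on $d$ strands are central in $B_{d}$. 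The main obstacle I anticipate is the precise identification of the cable structure of each large factor: in the standard braid projection, the strands belonging to a single component of the cable of $T(r,s)$ do not form a contiguous block (for instance, in $T(4,2)$ the components consist of strands $\{1,3\}$ and $\{2,4\}$, not $\{1,2\}$ and $\{3,4\}$), so one must perform an isotopy that reorganizes the strands so that bundles become contiguous and then carefully track the full twists appearing inside each bundle. The isotopies developed in Section~2 are natural tools for carrying out these rearrangements and for verifying the framing count $rs/d^{2}$, $uv/d^{2}$, $pq/d^{2}$ explicitly.
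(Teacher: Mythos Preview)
Your approach is essentially the paper's: partition the $p$ strands into $p/d$ contiguous bundles, observe that the small factors act inside a single bundle while each large factor permutes bundles as a $d$-cable of the corresponding reduced factor, and sum the framings to obtain $\eta$. The paper phrases this as ``push $p/d$ circles encircling $d$ adjacent strands down through the braid and watch them close up to the companion torus,'' but the content is identical, and your framing count $rs/d^{2}$ per large factor agrees exactly with the paper's computation of $b+c$.

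Your anticipated obstacle, however, rests on a confusion that could send you in the wrong direction. The bundles $\{1,\dots,d\},\{d+1,\dots,2d\},\dots$ are not meant to be the link \emph{components}; they are meridian disks of the companion solid torus at a single horizontal level. In your example $T(4,2)$ with $d=2$, the disk around $\{1,2\}$ at the top of $(\sigma_{1}\sigma_{2}\sigma_{3})^{2}$ is pushed down to a disk around $\{3,4\}$ at the bottom (the braid permutation sends contiguous $d$-blocks to contiguous $d$-blocks precisely because both parameters are multiples of $d$); the closure arcs then join it to the disk that started around $\{3,4\}$, and the two disks together sweep out a single solid torus with core $T(2,1)$. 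No strand reorganization is required, and the isotopies of Section~2 play no role here---the paper handles this step by a direct picture (Figure~\ref{Satellite}) showing the tubes tracking through a block $(\sigma_{1}\dots\sigma_{r-1})^{s}$ with $d\mid r$ and $d\mid s$.
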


\begin{proof}
We start with the braid $B$ of $V_1$ with $p$ strands 
\begin{align*}
&(\sigma_{p-1}\dots\sigma_{{p-u_1+1}})^{v_1}\dots (\sigma_{p-1}\dots\sigma_{{p-u_j+1}})^{v_j}(\sigma_{p-1}\dots\sigma_{{p-u_{j+1}+1}})^{v_{j+1}}\dots (\sigma_{p-1}\dots\sigma_{{p-u_m+1}})^{v_m}\\
&(\sigma_1\dots\sigma_{r_1-1})^{s_1}\dots(\sigma_{1}\dots\sigma_{r_i-1})^{s_i}(\sigma_{1}\dots\sigma_{r_{i+1}-1})^{s_{i+1}}\dots(\sigma_{1}\dots\sigma_{r_n-1})^{s_n}(\sigma_{1}\dots\sigma_{{p-1}})^{q}.
\end{align*}
We consider a set of $p/d$ circles on top of $B$ encircling $d$ adjacent parallel strands, as illustrated in Figure~\ref{Satellite}. We fix these circles on top of the braid and push copies of them down. This creates $p/d$ tubes. As $u_1, \dots, u_j, r_1, \dots, r_i \leq d$, the sub-braids provided by the parameters $(u_1,\overline{v_1}), \dots, (u_{j}, \overline{v_j})$ and  $(r_1,s_1), \dots, (r_{i},s_{i})$ are completely inside these tubes and do not change the shape of these tubes.
However, as we keep pushing these tubes down, they start to change following the strands of $B$. 
But since $u_{j+1}, \dots, u_m, v_{j+1}, \dots, v_m, r_{i+1}, \dots, r_n, s_{i+1}, \dots, s_n, p, q$ are multiples of $d$, every time these tubes pass through a sub-braid provided by the parameters$$(u_{j+1},\overline{v_{j+1}}), \dots, (u_{m},\overline{v_{m}}), (r_{i+1},s_{i+1}), \dots, (r_{n},s_{n}), \text{ or }(p, q),$$they still encircle $d$ adjacent parallel strands with the shape as at the top but they  possibly encircle different strands. 
Therefore, when they arrive at the bottom of $B$, they are in the same shape as at the top but encircling possibly different  adjacent parallel strands. This guarantees that if we push these tubes around the braid closure, they close to give the boundary of a tubular neighbourhood of a link $C$. 
More precisely, when these tubes pass through a sub-braid 
$$(u_{j+1},\overline{v_{j+1}}), \dots, (u_{m},\overline{v_{m}}), (r_{i+1},s_{i+1}), \dots, (r_{n},s_{n}), \text{ or }(p, q))$$their cores become the braid
$$(u_{j+1}/d,\overline{v_{j+1}/d}), \dots, (u_{m}/d,\overline{v_{m}/d}), (r_{i+1}/d,s_{i+1}/d), \dots, (r_{n}/d,s_{n}/d), \text{ or }(p/d, q/d)),$$respectively, as illustrated in Figure~\ref{Satellite}.
Hence, the companion of $V_1$ is the V-link 
$$C = V((u_{j+1}/d, \overline{v_{j+1}}/d), \dots, (u_{m}/d,\overline{v_{m}}/d), (r_{i+1}/d, s_{i+1}/d), \dots, (r_{n}/d, s_{n}/d), (p/d, q/d)).$$

\begin{figure}
\includegraphics[scale=0.5]{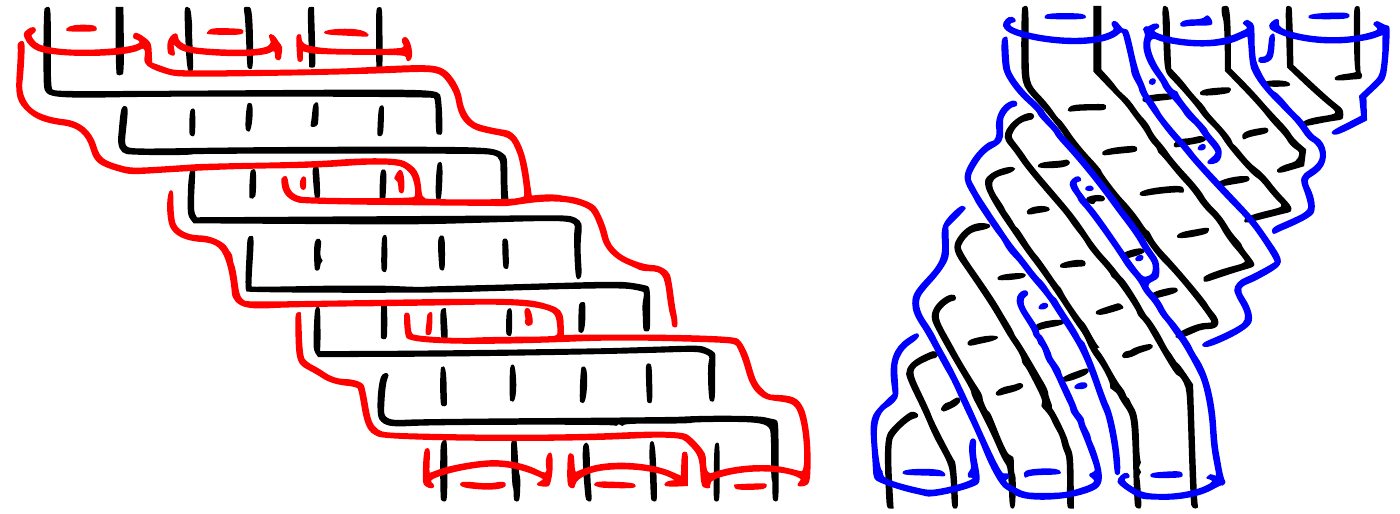} 
\caption{In the first drawing we have the braid $(6, 6)=(\sigma_1\sigma_2\sigma_3\sigma_4\sigma_5)^6$ and the red tubes are encircling 2 strands with cores given by the braid $(3, 3)=(\sigma_1\sigma_2)^3$. In the second drawing we have the braid 
$(6, \overline{6}) =(\sigma_5\sigma_4\sigma_3\sigma_2\sigma_1)^6$ and the blue tubes are encircling 2 strands with cores given by the braid $(3, \overline{3}) =(\sigma_2\sigma_1)^3$.}
\label{Satellite}
\end{figure}

Now we find the pattern considering that $C$ is knot. The pattern is the link inside the companion. So it contains the sub-braids
$$(u_1,\overline{v_1}), \dots, (u_{j},\overline{v_{j}}), (r_1,s_1), \dots, (r_{i},s_{i}).$$
In addition, it has a full twist for every horizontal line of the braid of $C$. 
There are $$b = \sum_{x=j+1}^{m}v_{x}/d + \sum_{z=i+1}^{n}s_{z}/d + q/d$$ horizontal lines in the braid diagram of $C$.
Furthermore, since we preserve the standard longitudes in our definition of satellite links, the pattern must have more 
$c$ full twists where $c$ is the number of crossings of the braid diagram of $C$. We have that
$$c = \sum_{x=j+1}^{m}(u_{x}/d-1)v_{x}/d + \sum_{z=i+1}^{n}(r_{z}/d-1)s_{z}/d + (p/d-1)q/d.$$
Since
$$b+c = \sum_{x=j+1}^{m}u_{x}v_{x}/d^2 + \sum_{z=i+1}^{n}r_{z}s_{z}/d^2 + pq/d^2.$$ 

Therefore, we conclude that the pattern $P$ of $V_1$ is the V-link given by the braid
$$(\sigma_{d-1}\dots\sigma_{{d-u_1+1}})^{v_1}\dots(\sigma_{d-1}\dots\sigma_{{d-u_j+1}})^{v_j}(\sigma_1\dots\sigma_{r_1-1})^{s_1}\dots(\sigma_1\dots\sigma_{r_i-1})^{s_i}(\sigma_{1}\dots\sigma_{{d-1}})^{d(b+c)}.$$
Hence $P$ is the V-link
$$V((u_1,\overline{v_1}), \dots, (u_{j},\overline{v_{j}}), (r_1,s_1), \dots, (r_{i},s_{i}), (d, d(b+c)))$$
if $r_{i}<d$ or
$$V((u_1,\overline{v_1}), \dots, (u_{j},\overline{v_{j}}), (r_1,s_1), \dots, (r_{i-1},s_{i-1}), (d, d(b+c) + s_{i}))$$
if $r_{i}=d$.
\end{proof}

\begin{corollary}\label{satellitecase1}
Let $r_1, \dots, r_n, s_1, \dots, s_n, u_1, \dots, u_m, v_1, \dots, v_m, p, q$ be positive integers such that $2\leq r_1< \dots < r_{n} < p$, $2\leq u_1< \dots < u_{m} \leq p\leq q$.
Suppose that there are positive numbers $i, j,$ and $d>1$ such that 
$r_1, \dots, r_i, u_1, \dots, u_j$ are less than or equal to $d$ and 
$r_{i+1}, \dots, r_n, s_{i+1}, \dots, s_n, u_{j+1}, \dots, u_m, v_{j+1}, \dots, v_m, p, q$ are multiples of and greater than $d$. 
Then, the T-links 
\begin{align*}
&T((r_1,s_1), \dots, (r_{i},s_{i}), (r_{i+1},s_{i+1}), \dots, (r_{n},s_{n}), (p, q-u_m), (p+ v_m, u_m - u_{m-1}), \dots,\\
&(p+ v_m+v_{m-1} + \dots + v_{j+1}, u_{j+1} - u_{j}), (p+ v_m+\dots + v_{j}, u_{j}-u_{j-1}), \dots,\\
&(p+ v_m+\dots + v_{2}, u_2-u_1), (p+ v_m + \dots + v_{1}, u_1)).
\end{align*}
and
\begin{align*}
&T((u_1, v_1), \dots, (u_{j}, v_{j}), (u_{j+1}, v_{j+1}), \dots, (u_{m}, v_{m}), (q, p-r_n), (q+ s_n, r_n - r_{n-1}), \dots,\\
&(q+s_n+s_{n-1} + \dots + s_{i+1}, r_{i+1}-r_{i}), (q+ s_n+s_{n-1} + \dots + s_{i}, r_{i}-r_{i-1}), \dots,\\
&(q+s_n+s_{n-1} + \dots + s_{2}, r_2-r_1), (q+ s_n+s_{n-1} + \dots + s_{1}, r_1))
\end{align*}
are satellite links with companions and patterns described in Theorem~\ref{satellitecaseone}.
\end{corollary}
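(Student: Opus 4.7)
The plan is to deduce Corollary~\ref{satellitecase1} directly from Theorem~\ref{satellitecaseone} together with the bridge provided by Theorem~\ref{equivalenceTV-links}. Since satellite-ness is a topological invariant of the link (independent of any particular braid representative), and since the two T-links in the statement of the corollary share exactly the same parameter data $(r_{\bullet}, s_{\bullet}, u_{\bullet}, v_{\bullet}, p, q)$ subject to exactly the same divisibility hypotheses as the V-link $V_{1}$ in Theorem~\ref{satellitecaseone}, all I need to do is identify these T-links with $V_{1}$ and invoke Theorem~\ref{satellitecaseone}.

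First, I would check that the hypotheses of Corollary~\ref{satellitecase1} are precisely the hypotheses of Theorem~\ref{satellitecaseone}: positive integers $r_{1}<\dots<r_{n}<p$, $u_{1}<\dots<u_{m}\leq p$, with an index split at $(i,j)$ and a divisor $d>1$ such that the initial parameters $r_{1},\dots,r_{i},u_{1},\dots,u_{j}$ are at most $d$ and the remaining parameters (and $p,q$) are multiples of $d$ strictly larger than $d$. These are verbatim the hypotheses of the theorem, so there is nothing to verify here.

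Next, I would appeal to Theorem~\ref{equivalenceTV-links}, which provides an explicit equivalence between the V-link
\[
V_{1} = V((u_1,\overline{v_1}), \dots, (u_{m},\overline{v_{m}}), (r_1,s_1), \dots, (r_{n},s_{n}), (p, q))
\]
and the two T-links displayed in the corollary. Because this equivalence is an ambient isotopy of links in $S^{3}$, any satellite structure on $V_{1}$ transports to both T-links with the same companion and the same pattern.

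Finally, by Theorem~\ref{satellitecaseone}, the V-link $V_{1}$ is a satellite link with companion
\[
C = V((u_{j+1}/d, \overline{v_{j+1}/d}), \dots, (u_{m}/d,\overline{v_{m}/d}), (r_{i+1}/d, s_{i+1}/d), \dots, (r_{n}/d, s_{n}/d), (p/d, q/d))
\]
and the pattern described there. Combining this with the previous paragraph immediately yields the corollary. There is no real obstacle to this proof; the only slightly delicate point is being careful to note that Theorem~\ref{equivalenceTV-links} outputs \emph{two} T-links equivalent to $V_{1}$, which is exactly why the corollary states the conclusion for both T-links simultaneously.
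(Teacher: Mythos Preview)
Your proposal is correct and follows essentially the same approach as the paper, which simply writes ``It follows from Theorems~\ref{satellitecaseone} and \ref{equivalence}.'' The only difference is that you cite Theorem~\ref{equivalenceTV-links} rather than Theorem~\ref{equivalence}, but the relevant equivalence between the V-link and the two T-links is contained verbatim in both.
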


\begin{proof}
It follows from Theorems~\ref{satellitecaseone} and \ref{equivalence}. 
\end{proof}

In particular, Corollary~\ref{satellitecase1} generalizes \cite[Theorem 5.4]{dePaivaPurcell2024}.

\begin{theorem}\label{satellitecase2}
Let $r_1, \dots, r_n, s_1, \dots, s_n, u_1, \dots, u_m, v_1, \dots, v_m, a, b, c, d, q$ be positive integers such that $2\leq r_1< \dots < r_{n} < qa$, $2\leq u_1< \dots < u_{m} \leq c$, $c+d = q$, $qb + c\geq aq$, and $q, a>1$.
Suppose that there is a natural number $i$ such that $r_1, \dots, r_i$ are less than or equal to $d$, $qa-c\leq r_{i+1}$, and $s_{i+1}+\dots +s_n = d$. Then, the V-link  
$$V_1 = V((u_1,\overline{v_1}), \dots, (u_{m},\overline{v_{m}}), (r_1,s_1), \dots, (r_{i},s_{i}), (r_{i+1},s_{i+1}), \dots, (r_{n},s_{n}), (qa, qb + c))$$
is the satellite link with companion the V-link $C = V(a, b+1)$. If $C$ is a knot, then the pattern is the V-link given by the braid
\begin{align*}
&(\sigma_{q-1}\dots\sigma_{{q-u_1+1}})^{v_1}\dots(\sigma_{q-1}\dots\sigma_{{q-u_m+1}})^{v_m}(\sigma_1\dots\sigma_{r_1-1})^{s_1}\dots(\sigma_{1}\dots\sigma_{r_i-1})^{s_i}\\
&(\sigma_{1}\dots\sigma_{d-1})^{d}(\sigma_{1}\dots\sigma_{{q-1}})^{q[b+ (a-1)(b+1)] +c}.
\end{align*}
\end{theorem}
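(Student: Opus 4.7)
My plan is to adapt the tube-chasing argument of Theorem~\ref{satellitecaseone}. Writing the braid of $V_1$ on $qa$ strands as $B = U \cdot L_1 \cdot L_2 \cdot (\sigma_1 \dots \sigma_{qa-1})^{qb+c}$, where $U$ gathers the $u_k$-sub-braids, $L_1$ the $r_j$-sub-braids with $j \leq i$, and $L_2$ the $r_j$-sub-braids with $j > i$, I would place $a$ disjoint circles at the top of $B$, each encircling $q$ adjacent parallel strands in positions $(k-1)q+1, \dots, kq$ (giving tubes $T_k$ for $k = 1, \dots, a$), and push copies of them downward to produce $a$ tubes that I will show close up to bound a tubular neighbourhood of the companion $V(a, b+1) = T(a, b+1)$.

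As in the proof of Theorem~\ref{satellitecaseone}, the tubes pass unaltered through $U \cdot L_1$: since $u_k \leq c$ and $c+d=q$, each factor of $U$ acts on positions $\geq q(a-1)+d+1$, so entirely inside $T_a$; and since $r_j \leq d \leq q$ for $j \leq i$, each factor of $L_1$ acts entirely inside $T_1$. The crucial step is the combined passage through $L_2 \cdot (\sigma_1 \dots \sigma_{qa-1})^{qb+c}$, since neither factor alone preserves the tubes. The numerical key is the identity $(qb+c)+d = q(b+1)$, combined with $\sum_{j>i} s_j = d$ and the hypothesis $qa-c \leq r_{i+1}$, which forces $r_j - s_j + 1 \geq q(a-1)+1$ for each $j > i$ so that the corrective prefixes introduced by the isotopies of Section~2 lie entirely inside $T_a$. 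Concretely, I plan to apply Proposition~\ref{isopoty1} iteratively to absorb each factor of $L_2$ into the main torus braid, using the centrality of full twists (which exist because $qb+c \geq qa$) to set up its hypothesis $q \leq r$, and thereby rewrite the combination inside the braid closure of $B$ as $W \cdot (\sigma_1 \dots \sigma_{qa-1})^{q(b+1)}$, where $W$ is a positive braid supported on $d$ consecutive strands of $T_a$. Since $q$ divides $q(b+1)$, the factor $(\sigma_1 \dots \sigma_{qa-1})^{q(b+1)}$ shifts the tubes cyclically by $b+1 \bmod a$ positions, realizing the tube-level action of $(\sigma_1 \dots \sigma_{a-1})^{b+1}$ whose closure is the companion $T(a, b+1)$; hence the tubes close up, and the cores of the bounded solid tori trace out $T(a, b+1) = V(a, b+1)$.

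The pattern is then the link inside a single tube. The factors of $L_1$ contribute $(\sigma_1 \dots \sigma_{r_j-1})^{s_j}$ for $j \leq i$; using the reflection symmetry of Lemma~\ref{otherbraid}, the factors of $U$ contribute $(\sigma_{q-1} \dots \sigma_{q-u_k+1})^{v_k}$; the absorbed $W$-piece from $L_2$ packages into the full twist $(\sigma_1 \dots \sigma_{d-1})^d$ on $d$ strands of the tube; and restricting the remaining torus factor $(\sigma_1 \dots \sigma_{qa-1})^{q(b+1)}$ to one tube and adding the blackboard framing correction of $q(a-1)(b+1)$ extra within-tube cycles (the writhe of the companion braid is $(a-1)(b+1)$ crossings, contributing $(a-1)(b+1)$ full twists per tube) produces $(\sigma_1 \dots \sigma_{q-1})^{q[b+(a-1)(b+1)]+c}$, matching the stated pattern. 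The main obstacle I anticipate is the combinatorial verification that the iterated applications of Proposition~\ref{isopoty1} to the varying widths $r_{i+1}<\dots<r_n$ indeed assemble the $L_2$-corrections into a single $(d,d)$ full twist supported on $d$ consecutive strands of one tube, together with the bookkeeping for the blackboard-framing contribution that yields the exponent $q[b+(a-1)(b+1)]+c$ in the pattern's main torus factor.
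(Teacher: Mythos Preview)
Your plan differs substantially from the paper's argument, and the step you single out as the main obstacle is in fact a genuine gap.

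The paper never invokes Proposition~\ref{isopoty1} here. It cyclically moves the torus factor to the top, working with $B'=(\sigma_1\dots\sigma_{qa-1})^{qb+c}\cdot U\cdot L_1\cdot L_2$, and pushes the $a$ tubes straight down through $B'$. After the first $qb$ horizontal lines the tubes are just permuted by $(\sigma_1\dots\sigma_{a-1})^{b}$. The crux is what happens to the \emph{leftmost} tube $C_1$ upon meeting the remaining $c$ lines: it \emph{splits} into two arcs, one riding the $c$ overstrands to the right and one staying on the first $d=q-c$ strands. These arcs are then tracked separately: the right arc swallows $U$ (using $u_m\le c$) and lands on the last $c$ strands; the left arc swallows $L_1$ (using $r_j\le d$ for $j\le i$) and is then carried by the overstrands of $L_2$ to positions $q(a-1)+1,\dots,qa-c$ (this is precisely where $r_{i+1}\ge qa-c$ and $\sum_{j>i}s_j=d$ enter). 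The two arcs reunite as a single circle on the last $q$ strands, adding one horizontal line to the tube-level braid and giving the companion $T(a,b+1)$; the $L_2$ passage contributes the $(\sigma_1\dots\sigma_{d-1})^{d}$ piece of the pattern. This split--and--reassemble of a single tube is the whole point.

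Your proposed substitute, rewriting $L_2\cdot(\sigma_1\dots\sigma_{qa-1})^{qb+c}$ as $W\cdot(\sigma_1\dots\sigma_{qa-1})^{q(b+1)}$ via Proposition~\ref{isopoty1}, does not go through as stated. Proposition~\ref{isopoty1} is an isotopy of closures that \emph{changes the number of strands} (from $p$ to $r$), not a braid-group identity on a fixed strand set; so it cannot be applied ``inside the braid closure of $B$'' to produce another $qa$-strand word while the tubes, which are tied to the $qa$-strand axis, remain meaningful. Stripping off full twists does not fix this: the residual exponent $(qb+c)\bmod qa$ is not controlled relative to $r_n$, so the hypothesis $q\le r$ of Proposition~\ref{isopoty1} need not hold. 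Moreover $L_2$ lives on the \emph{leftmost} $r_n$ strands, while you claim $W$ is supported in the rightmost tube $T_a$; the element $L_2\cdot(\sigma_1\dots\sigma_{qa-1})^{-d}$ is not even positive in $B_{qa}$, so no such positive $W$ exists as a braid identity. The inequality $r_j-s_j+1\ge q(a-1)+1$ you record is exactly the observation that the overstrands of $L_2$ land in $T_a$, but that is input to the paper's geometric split--and--rejoin, not a substitute for it. Your final formulas for companion and pattern (including the exponent $q[b+(a-1)(b+1)]+c$) are correct; what is missing is the mechanism that makes the tubes close.
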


\begin{proof}
The V-link $V_1$ is given by the following braid with $qa$
\begin{align*}
&B = (\sigma_{qa-1}\dots\sigma_{{qa-u_1+1}})^{v_1}\dots(\sigma_{qa-1}\dots\sigma_{{qa-u_m+1}})^{v_m}(\sigma_1\dots\sigma_{r_1-1})^{s_1}\dots(\sigma_{1}\dots\sigma_{r_i-1})^{s_i}\\
&(\sigma_{1}\dots\sigma_{r_{i+1}-1})^{s_{i+1}}\dots(\sigma_{1}\dots\sigma_{r_n-1})^{s_n}(\sigma_{1}\dots\sigma_{{qa-1}})^{qb + c}.
\end{align*}

To better visualize the construct of the essential torus, we push the sub-braid $(\sigma_{1}\dots\sigma_{{qa-1}})^{qb + c}$ of $B$ around the braid closure to place it on top of $B$ so that we obtain the braid
\begin{align*}
&B' = (\sigma_{1}\dots\sigma_{{qa-1}})^{qb + c}(\sigma_{qa-1}\dots\sigma_{{qa-u_1+1}})^{v_1}\dots(\sigma_{qa-1}\dots\sigma_{{qa-u_m+1}})^{v_m}(\sigma_1\dots\sigma_{r_1-1})^{s_1}\dots\\
&(\sigma_{1}\dots\sigma_{r_i-1})^{s_i}(\sigma_{1}\dots\sigma_{r_{i+1}-1})^{s_{i+1}}\dots(\sigma_{1}\dots\sigma_{r_n-1})^{s_n}.
\end{align*}

As before, we start with a set of $a$ circles on top of $B'$ encircling $q$ adjacent parallel strands as illustrated in Figure~\ref{Satellite2}.
\begin{figure}
\includegraphics[scale=0.3]{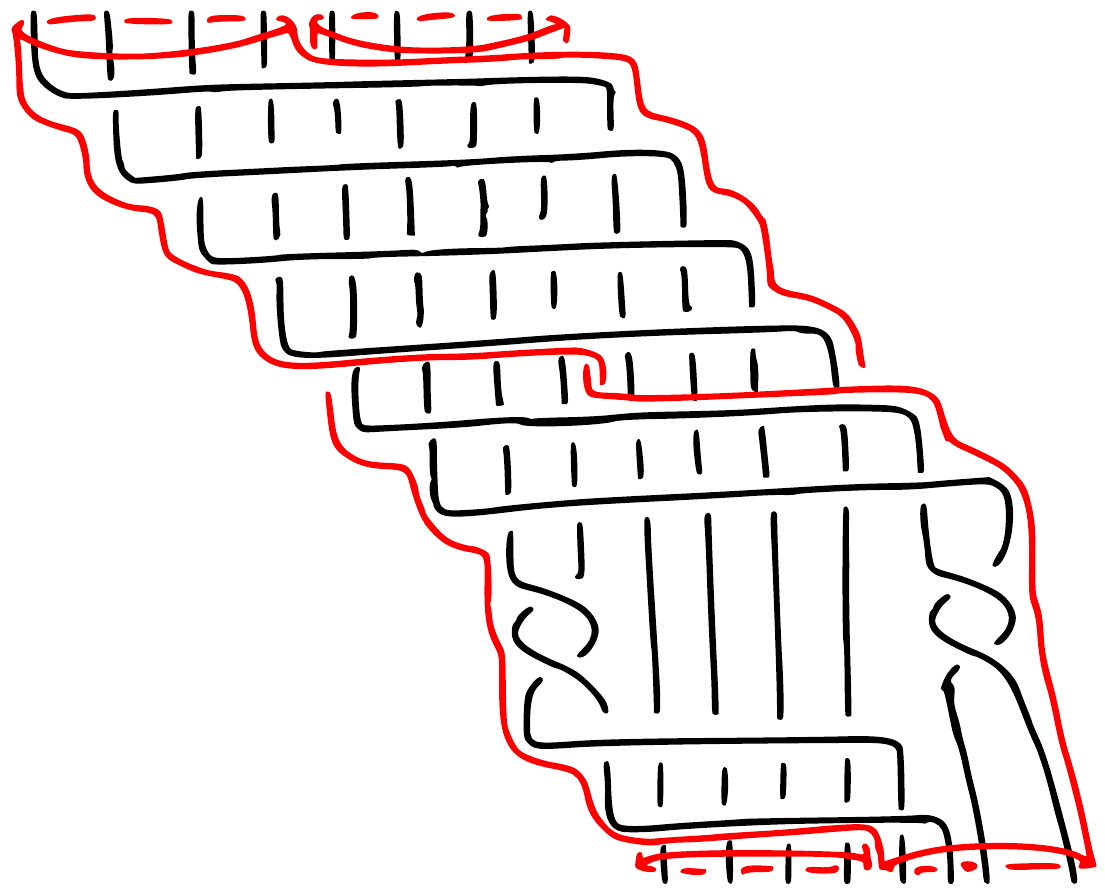}  
\caption{This example illustrates part of the satellite V-link with parameters $V((2, \overline{2}), (2, 2), (6, 2), (8, 10))$. Its companion is the V-link $V(2, 3)$ and the pattern is the V-link
$V((2, \overline{2}), (2, 4), (4, 2+ 4\times 5))$.}
\label{Satellite2}
\end{figure}
Then, we push copies of these circles down to be below the sub-braid $(\sigma_{1}\dots\sigma_{{qa-1}})^{qb}$. As a set, they are in the same positions as at the top but in different orders. Their cores are given by the braid $(\sigma_{1}\dots\sigma_{{a-1}})^{b}$. We consider the first circle $C_1$ in this position. As we keep pushing it down, it goes to the right to enclose the last $c$ horizontal strands of $(\sigma_{1}\dots\sigma_{{qa-1}})^{qb + c}$ and down to enclose the first $q-c$ strands at the bottom. 
As $2\leq u_1< \dots < u_{m} \leq c$, then after passing through the last $c$ horizontal strands of $(\sigma_{1}\dots\sigma_{{qa-1}})^{qb + c}$, this part of $C_1$ encloses the sub-braid provided by the parameters $(\sigma_{qa-1}\dots\sigma_{{qa-u_1+1}})^{v_1}\dots(\sigma_{qa-1}\dots\sigma_{{qa-u_m+1}})^{v_m}$ for this part of $C_1$ to end up encircling the last $c$ strands. 
As $r_1, \dots, r_i$ are less than or equal to $d$, $qa-c\leq r_{i+1}$, and $s_{i+1}+\dots +s_n = d$, then
the part of $C_1$ that was enclosing the first $q-c = d$ strands  keep going down to enclose the sub-braid provided by the parameters $(\sigma_1\dots\sigma_{r_1-1})^{s_1}\dots(\sigma_{1}\dots\sigma_{r_i-1})^{s_i}$ until it starts going to the right following the horizontal strands of the sub-braid $(\sigma_{1}\dots\sigma_{r_{i+1}-1})^{s_{i+1}}\dots(\sigma_{1}\dots\sigma_{r_n-1})^{s_n}$ to end up enclosing from the $(qa-q+1)$-th strand to the $(qa-c)$-th strand of $B'$. Since $c+d = q$, then $C_1$ becomes a circle encircling the last $q$ strands at the bottom of $B'$. 

At the bottom of $(\sigma_{1}\dots\sigma_{{qa-1}})^{qb}$, the other circles go down without any distortion to encircling $q$ adjacent parallel strands. So, a horizontal line is added to the braid that gives the core of the tubes formed by pushing the circles down. Therefore, their cores become the braid $(\sigma_{1}\dots\sigma_{{a-1}})^{b+1}$.
Hence after pushing the $a$ circles from the top to the bottom of $B'$, they end up in the same positions but in different orders. This ensures that if we push these circles around the braid closure, they close to give the boundary of the tubular neighbourhood of the torus link $C = T(a, b+1)$ or the V-link $V(a, b+1)$.  

Now we find the pattern considering that the companion $C$ is a knot. The pattern is the link within the companion. Then, the pattern is given by a braid with $q$ strands that has the sub-braid provided by the parameters 
$$(u_1,\overline{v_1}) \dots (u_{m},\overline{v_{m}})(r_1,s_1) \dots (r_{i},s_{i})(d, d)(q, qb + c).$$
Furthermore, since we preserve the standard longitudes, we need to add the number of crossings of $C$ in full twists to the pattern. Hence the pattern is the V-link given by the braid
\begin{align*}
&(\sigma_{q-1}\dots\sigma_{{q-u_1+1}})^{v_1}\dots(\sigma_{q-1}\dots\sigma_{{q-u_m+1}})^{v_m}(\sigma_1\dots\sigma_{r_1-1})^{s_1}\dots(\sigma_{1}\dots\sigma_{r_i-1})^{s_i}\\
&(\sigma_{1}\dots\sigma_{d-1})^{d}(\sigma_{1}\dots\sigma_{{q-1}})^{q[b+ (a-1)(b+1)] +c}.
\end{align*}
\end{proof}


\begin{corollary}\label{corollarysatellitecase2}
Let $r_1, \dots, r_n, s_1, \dots, s_n, u_1, \dots, u_m, v_1, \dots, v_m, a, b, c, d, q$ be positive integers such that $2\leq r_1< \dots < r_{n} < qa$, $2\leq u_1< \dots < u_{m} \leq c$, $c+d = q$, $qb + c\geq aq$, and $q, a>1$.
Suppose that there is a natural number $i$ such that $r_1, \dots, r_i$ are less than or equal to $d$, $qa-c\leq r_{i+1}$, and $s_{i+1}+\dots +s_n = d$. Then, the T-links 
\begin{align*}
&T((r_1,s_1), \dots, (r_{i},s_{i}), (r_{i+1},s_{i+1}), \dots, (r_{n},s_{n}), (p, q-u_m), (p+ v_m, u_m - u_{m-1}), \dots,\\
&(p+ v_m+\dots + v_{2}, u_2-u_1), (p+ v_m + \dots + v_{1}, u_1))
\end{align*}
and
\begin{align*}
&T((u_1, v_1), \dots, (u_{m}, v_{m}), (q, p-r_n), (q+ s_n, r_n - r_{n-1}), \dots, (q+s_n+ \dots + s_{i+1}, r_{i+1}-r_{i}), \\
&(q+ s_n+\dots + s_{i}, r_{i}-r_{i-1}), \dots, (q+s_n+ \dots + s_{2}, r_2-r_1), (q+ s_n+ \dots + s_{1}, r_1))
\end{align*}
are satellite links with companions and patterns described in Theorem~\ref{satellitecase2}.
\end{corollary}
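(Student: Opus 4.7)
The proof will follow the same two-step template used for Corollary~\ref{satellitecase1}: invoke the geometric input (here Theorem~\ref{satellitecase2}) on the V-link side, then transport the conclusion to the T-link side via Theorem~\ref{equivalence}.

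First I will set $P = qa$ and $Q = qb+c$ and consider the V-link
\[
V_1 = V((u_1,\overline{v_1}), \dots, (u_{m},\overline{v_{m}}), (r_1,s_1), \dots, (r_{n},s_{n}), (P,Q)).
\]
The inequalities assumed in the corollary translate into exactly the constraints required to invoke Theorem~\ref{equivalence}: one checks $r_n < qa = P$, $u_m \le c \le c+d = q \le Q$, and $P = qa \le qb+c = Q$ (this last being the standing assumption $qb+c \ge aq$). Theorem~\ref{equivalence} then asserts that $V_1$ is isotopic to each of the two T-links appearing in the statement of the corollary, once $p$ and $q$ in that statement are read as $P = qa$ and $Q = qb+c$ respectively.

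Second, I will apply Theorem~\ref{satellitecase2} directly to $V_1$. The remaining hypotheses of the corollary ($c+d = q$, $qb+c \ge aq$, $q,a > 1$, and the existence of $i$ with $r_1,\ldots,r_i \le d$, $qa - c \le r_{i+1}$, and $s_{i+1} + \cdots + s_n = d$) are literally the hypotheses of Theorem~\ref{satellitecase2}, which then concludes that $V_1$ is a satellite link with companion $V(a,b+1)$ and, when this companion is a knot, with the explicit pattern written out in that theorem.

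Since being a satellite link together with the choice of companion and pattern are invariants of the link type, the two T-links, being isotopic to $V_1$ by the first step, are themselves satellite links with the same companion and pattern. I expect no genuine obstacle beyond the notational matching $p \leftrightarrow qa$, $q \leftrightarrow qb+c$ between the T-link formulas in Theorem~\ref{equivalence} and in the corollary.
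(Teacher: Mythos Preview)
Your proposal is correct and follows exactly the paper's approach: the paper's proof is the single line ``It follows from Theorems~\ref{satellitecase2} and \ref{equivalence},'' and you have simply unpacked that line, correctly matching the $(p,q)$ of Theorem~\ref{equivalence} with $(qa,\,qb+c)$ from Theorem~\ref{satellitecase2}. One tiny quibble: to invoke Theorem~\ref{equivalence} you need $u_m \le P$ (not $u_m \le Q$), but this is immediate from $u_m \le c < q \le qa = P$.
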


\begin{proof}
It follows from Theorems~\ref{satellitecase2} and \ref{equivalence}. 
\end{proof}

In particular, Corollary~\ref{corollarysatellitecase2} generalizes \cite[Theorem 6.1]{de2021satellites}.

\section{Hyperbolic knots and positive braids with a full twist}\label{Hyperbolic}

In this section we find some mild new conditions on positive braids with at least one full twist to produce hyperbolic knots.

\subsection{Non-satellite knots given by braids}
In this subsection we prove that, under these mild conditions, knots given by positive braids with at least one full twist are atoroidal, meaning that they do not have any essential torus, which is equivalent to saying that they are not satellite knots.

\begin{definition}
Consider an integer $q>1$. A generalized $q$-cabling of a link $L$ is a link $L'$ contained in the
interior of a tubular neighbourhood $L\times D^2$ of $L$ such that
\begin{enumerate}
\item each fiber $D^2$ intersects $L'$ transversely in $q$ points;  and

\item all strands of $L'$ are oriented in the same direction as $L$ itself.
\end{enumerate} 
\end{definition}

The next theorem was proved by Williams in \cite{Williams}.
\begin{theorem}[Williams, \cite{Williams}]\label{Williams}
If $L$ is a link with each component a non-trivial knot and $L'$ is a generalized $q$-cabling of $L$,
then $$\beta(L') = q\beta(L),$$ where $\beta(*)$ is the braid index of $*$.
\end{theorem}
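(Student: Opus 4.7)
The plan is to prove the two inequalities $\beta(L') \leq q\beta(L)$ and $\beta(L') \geq q\beta(L)$ separately.

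For the upper bound, I would start with a minimal closed braid presentation of $L$ on $\beta(L)$ strands, with braid axis $A$, and arrange the tubular neighbourhood $L \times D^{2}$ to lie in a small regular neighbourhood of this braid. Each of the $\beta(L)$ strands of $L$ is then replaced by $q$ parallel copies, one for each of the $q$ points of $L'$ in the corresponding meridian disk of $L$. The additional braiding inside each annular cross-section of $L \times D^{2}$, which specifies how $L'$ sits inside the tubular neighbourhood, can be inserted between consecutive crossings of the braid for $L$; because all strands of $L'$ are oriented in the same direction as $L$ itself, no strand ever has to reverse direction, and the resulting diagram is genuinely a closed braid on $q\beta(L)$ strands with axis $A$. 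This yields $\beta(L') \leq q\beta(L)$.

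For the lower bound, I would use the essential torus $T = \partial(L \times D^{2})$. This torus is incompressible in $S^{3} \setminus L'$: it is incompressible on the outside because each component of $L$ is non-trivial, and incompressible on the inside because $L'$ meets every meridian disk of $L \times D^{2}$ in $q \geq 2$ points, so no meridian of $T$ bounds a disk there. Take any closed braid presentation of $L'$ on $N = \beta(L')$ strands with braid axis $A'$. By the Birman--Menasco classification of essential tori in closed braid complements, after a braid-preserving isotopy one may put $T$ into a ``standard'' position with respect to $A'$, in which each meridian disk of $A'$ meets $T$ in a single essential circle. The braid on $L'$ then restricts to a $q$-strand closed braid inside $L \times D^{2}$, and the core braid on $N/q$ strands (obtained by collapsing each meridian $q$-tuple of strands to its centre) is a closed braid presentation of $L$. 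Therefore $N/q \geq \beta(L)$, giving $\beta(L') \geq q\beta(L)$.

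The main obstacle is the invocation of the Birman--Menasco structure theorem and verifying that $T$ can really be put into the standard position described above. The delicate step is ruling out the alternative ways that $T$ could meet the meridian disks of $A'$, for instance in nested circles or in arcs producing exchange-move patterns. The hypotheses that every component of $L$ is a non-trivial knot and that the cabling number $q$ is at least $2$ are precisely what is needed to eliminate these alternatives and to force the factorisation of the braid on $L'$ through a braid on $L$; without either hypothesis the conclusion can fail (a $1$-cabling is just $L$ itself up to framing, and cabling the unknot can drop the braid index).
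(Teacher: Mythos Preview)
The paper does not contain a proof of this theorem; it is quoted as a result of Williams \cite{Williams} and simply cited, so there is nothing in the paper to compare your argument against.

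As for the argument itself: the upper bound $\beta(L')\le q\beta(L)$ is correct and standard. For the lower bound, your strategy of using the incompressible torus $T=\partial(L\times D^{2})$ together with the Birman--Menasco structure theory is the right idea and is essentially how modern treatments (e.g.\ \cite{MR1286930}, \cite{ito2024satellite}) establish statements of this type. However, the claim that after a braid-preserving isotopy ``each meridian disk of $A'$ meets $T$ in a single essential circle'' is not what the Birman--Menasco classification gives you directly: their normal forms for essential tori allow several types, including tori of mixed type that interact with the braid foliation via saddles and that may require exchange moves or destabilisations to simplify. Reducing to the purely ``tubular'' type you describe is exactly the nontrivial content of the theorem, and you have identified this correctly as the main obstacle but not carried it out. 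Williams's original argument predates and does not use the Birman--Menasco machinery; if you want a self-contained proof along the lines you sketch, the cleanest route is to invoke the braid-index results for satellite knots in \cite{MR1286930} or \cite{ito2024satellite} rather than to redo the torus classification.
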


The case where $L$ is a knot given by a positive braid with a full twist, which is the case we are interested in this paper, also follows from Birman and Menasco \cite[Corollary 3]{MR1286930} and Ito \cite[Theorem 5.5, Lemma 3.5]{ito2024satellite}.

de Paiva considered the case where $L$ is a trivial knot.

\begin{lemma}[de Paiva, \cite{MR4494619}]\label{lemma2}
Let $L'$ be a generalized $q$-cabling of the unknot $L$, with $L$ given by a positive braid with $n$ strands, where $n>1$. Also, assume the knot inside $L$ is given by a positive braid. Then, $L'$ has braid index equal to $q$.
\end{lemma}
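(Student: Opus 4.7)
My plan is to prove the two inequalities $\beta(L')\le q$ and $\beta(L')\ge q$ separately. The upper bound is essentially built into the cabling hypothesis: since $L$ is unknotted, the complement $S^3\setminus(L\times D^2)$ is a standardly embedded solid torus whose core $L^*$ is an unknot disjoint from $L'$. The two conditions defining a generalised $q$-cabling say exactly that each meridional disk of this complementary solid torus meets $L'$ transversely in $q$ coherently oriented points; hence $L'$ is a closed braid on $q$ strands with braid axis $L^*$, and so $\beta(L')\le q$.

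For the matching lower bound $\beta(L')\ge q$, I would appeal to Franks--Williams \cite[Corollary 2.4]{Franks}: any positive braid containing at least one full twist realises the braid index of its closure. It therefore suffices to show that the $q$-strand presentation of $L'$ around $L^*$ constructed above is isotopic to a positive braid on $q$ strands containing a complete twist $(\sigma_1\cdots\sigma_{q-1})^q$. Starting from the given positive braid presentation of $L'$ sitting inside $V=L\times D^2$, I would carry out a sequence of isotopies of the kind developed in Propositions \ref{isopoty1}--\ref{isopoty4}, pushing and amalgamating strands using crucially the fact that $L$ is an unknot, so that moving across the braid closure of $L$ preserves positivity.

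The main obstacle is the combinatorial bookkeeping: one must verify that after these isotopies the resulting word on $q$ strands is genuinely positive and that the accumulated crossings assemble into at least one full twist, not merely into a positive word. The hypothesis $n>1$ enters at precisely this point, since the closure of a positive braid on $n$ strands is the unknot only when it has exactly $n-1$ crossings, and these crossings together with the positivity of $L'$ inside $V$ supply the budget needed to guarantee the full twist on $q$ strands around $L^*$. Once the $q$-braid has been put in this form, Franks--Williams yields $\beta(L')\ge q$, and combined with the easy upper bound this gives $\beta(L')=q$.
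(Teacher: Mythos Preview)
The paper does not prove Lemma~\ref{lemma2}; it is quoted verbatim from \cite{MR4494619}, so there is no in-paper argument to compare your proposal against. I can therefore only comment on the soundness of your sketch on its own terms.

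Your upper bound $\beta(L')\le q$ is correct and cleanly stated: since $L$ is unknotted, the complementary solid torus has core $L^*$, and each meridional disc of that solid torus meets $V$ in a single fibre disc of $V=L\times D^2$, hence meets $L'$ in $q$ coherently oriented points. So $L'$ is a closed $q$-braid about $L^*$.

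For the lower bound your strategy---exhibit $L'$ as a positive $q$-braid containing a full twist and invoke \cite[Corollary~2.4]{Franks}---is the right one, and your identification of where the hypothesis $n>1$ enters (an unknotted positive $n$-braid has exactly $n-1$ crossings) is also correct. However, the appeal to Propositions~\ref{isopoty1}--\ref{isopoty4} is misplaced: those results manipulate very specific braid words of the shape $\beta(\sigma_1\cdots\sigma_{p-1})^q$ and do not apply to an arbitrary positive cabling pattern. The step you label ``combinatorial bookkeeping'' is in fact the heart of the matter and is not carried out. A cleaner way to complete it is to compare framings rather than chase isotopies: the $q$-braid of $L'$ about $L^*$ is read off using the Seifert (0-)framing of $L$, whereas the hypothesis ``the knot inside $L$ is given by a positive braid'' refers to the product/blackboard framing coming from the $n$-strand diagram. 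Since the writhe of that diagram is $n-1>0$, the two differ by $n-1$ positive full twists on $q$ strands, so the $q$-braid about $L^*$ equals (positive pattern)$\cdot\Delta^{2(n-1)}$, which is positive and contains at least one full twist. Then Franks--Williams gives $\beta(L')\ge q$, matching your upper bound. As written, your proposal asserts this conclusion but does not establish it.
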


\begin{lemma}\label{lemma1}
Let $p, r, k$ be positive integers with $p>r>1$. Consider $\beta$ a braid with $r$ strands. Then, the knot $K$ given by the closure of the braid $\beta(\sigma_1\dots \sigma_{p-1})^{pk}$ is a link with more than one link component.
\end{lemma}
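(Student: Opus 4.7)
The plan is to count the number of link components of the closure by analyzing the induced permutation on strands. Recall that the number of components of the closure of a braid equals the number of cycles in its image under the natural homomorphism $B_p \to S_p$ sending $\sigma_i$ to the transposition $(i,\,i+1)$.

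First I would observe that $(\sigma_1\sigma_2\cdots\sigma_{p-1})^p$ is the standard full twist $\Delta^2$ on $p$ strands, and therefore lies in the pure braid group; equivalently, the permutation of $\sigma_1\sigma_2\cdots\sigma_{p-1}$ is the $p$-cycle sending $1\mapsto p$ and $i\mapsto i-1$ for $i\geq 2$, whose $p$-th power is the identity. Consequently, $(\sigma_1\sigma_2\cdots\sigma_{p-1})^{pk}$ also induces the identity permutation on $\{1,\dots,p\}$.

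Next, since $\beta$ is a braid on $r$ strands viewed inside $B_p$, its permutation fixes the strands $r+1,\,r+2,\dots,\,p$ pointwise. Therefore the permutation induced by $\beta(\sigma_1\sigma_2\cdots\sigma_{p-1})^{pk}$ agrees with the permutation of $\beta$, which still fixes each of the labels $r+1,\dots,p$. Each such fixed label contributes a singleton cycle, producing $p-r$ cycles. Together with at least one additional cycle arising from the action of $\beta$ on $\{1,\dots,r\}$, the total number of cycles is at least $p-r+1\geq 2$ because $p>r$.

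Translating this back to the braid closure, the closure of $\beta(\sigma_1\sigma_2\cdots\sigma_{p-1})^{pk}$ has at least two link components, as required. The only subtle point is the identification of $(\sigma_1\cdots\sigma_{p-1})^p$ with the full twist (hence a pure braid); once that is in hand, the component count is immediate from the permutation argument, and there is no serious obstacle in the proof.
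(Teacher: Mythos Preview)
Your proof is correct and follows essentially the same line as the paper's: both arguments hinge on the fact that the full twist $(\sigma_1\cdots\sigma_{p-1})^{pk}$ induces the identity permutation, so the last $p-r$ strands, untouched by $\beta$, each close up into separate components. The only difference is framing---you phrase it via the homomorphism $B_p\to S_p$ and cycle counting, while the paper says more informally that full twists are homeomorphisms and hence preserve the component count of the underlying unlink.
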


\begin{proof} 
The braid $(\sigma_1\dots \sigma_{p-1})^{pk}$ is obtained by applying $k$ full twists on $p$ unlink components. Since full twists are homeomorphisms, the closure of $(\sigma_1\dots \sigma_{p-1})^{pk}$ also has $p$ link components. So $(\sigma_1\dots \sigma_{p-1})^{pk}$ is a braid with $p$ strands and $p$ link components. Since $\beta$ has less strands than $p$, this implies that $\beta(\sigma_1\dots \sigma_{p-1})^{pk}$ has at least two link components. More precisely, the last strand of the braid $\beta(\sigma_1\dots \sigma_{p-1})^{pk}$ forms a single link component and the first $p-1$ strands form at least one link component.
\end{proof}
 
\begin{lemma}\label{lemmaTori1}
Let $p, q, r, k$ be positive integers such that $p>q>r> gcd(p, q)> 1$ and $r$ has no common divisor with $p$ and $q$. Consider 
$\beta$ a positive braid with $r$ strands and at least one positive full twist on $r$ strands.
Suppose that the knot $K$ given by the closure of the positive braid $\beta(\sigma_1\dots \sigma_{p-1})^{q + pk}$ is a generalized $d$-cabling of a knotted torus $T$ that doesn't intersect the braid axis of $K$. Then, $d$ divides $p, q$ and there are positive integers $p', q'$ multiples of $d$ and a torus $T'$ with $p'>r>q'\geq d$ such that the closure $K'$ of one of the positive braids $\beta(\sigma_1\dots \sigma_{p'-1})^{q'}$ or $\beta(\sigma_{p'-1}\dots \sigma_{1})^{q'}$ is a generalized $d$-cabling of the core of $T'$. Furthermore, $T'$ doesn't intersect the braid axis of $K'$.
\end{lemma}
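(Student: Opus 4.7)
The plan involves three stages: establishing $d\mid p$ via braid index considerations; establishing $d\mid q$ from how the $d$-cabling decomposition interacts with the torus-braid factor $(\sigma_1\dots\sigma_{p-1})^{q+pk}$; and constructing $T'$ by iteratively reducing the braid through the isotopies of Section~2 while tracking $T$.

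First, since $k\geq 1$, we have $q+pk\geq p+q>p$, so $\beta(\sigma_1\dots\sigma_{p-1})^{q+pk}$ is a positive braid on $p$ strands containing at least one full twist on $p$ strands. By Franks and Williams~\cite[Corollary~2.4]{Franks}, $K$ realises its braid index, so $\beta(K)=p$. Applying Theorem~\ref{Williams} to $K$ as a generalized $d$-cabling of the non-trivial companion $C$ (the core of $T$) gives $p=\beta(K)=d\cdot\beta(C)$, hence $d\mid p$; set $n:=p/d$.

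Second, the torus $T$ is disjoint from the braid axis $A$, so it lies in the solid torus $V=S^3\setminus N(A)$ and bounds in $V$ a solid torus $V_T\ni K$. Each meridional disk of $V$ meets $V_T$ in $n$ subdisks, and each subdisk meets $K$ in $d$ points. Using that $T$ is disjoint from $A$, I would isotope $T$ within $S^3\setminus(A\cup K)$ so that the induced partition of the $p$ strands of $K$ on a meridional disk of $V$ becomes a partition into $n$ consecutive blocks of $d$ strands. The permutation induced by $(\sigma_1\dots\sigma_{p-1})^{q+pk}$ is a cyclic shift by $q\pmod p$; its compatibility with this consecutive-block partition forces $d\mid q$.

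Third, each isotopy in Propositions~\ref{isopoty1}--\ref{isopoty4} happens entirely in $S^3\setminus A$; since $T\subset S^3\setminus A$, each such isotopy moves the braid while transporting $T$ to a new torus $T''$ still disjoint from $A$ and still realising the cabling of the isotoped knot. Iterating these in the style of Proposition~\ref{propositionlastcases}, choosing at each step whichever of Propositions~\ref{isopoty1}--\ref{isopoty4} applies (according as the current exponent dominates the strand count or not), the braid is eventually reduced to one of $\beta(\sigma_1\dots\sigma_{p'-1})^{q'}$ or $\beta(\sigma_{p'-1}\dots\sigma_1)^{q'}$ with $q'<r<p'$. The final transported torus is $T'$, and the divisibility $d\mid p',q'$ at each intermediate stage is preserved by the same consecutive-block argument as in the second step.

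The central technical difficulty is the geometric claim used in the second stage: that $T$ can be isotoped within $S^3\setminus A$ so that the induced bundle decomposition of the $p$ strands becomes the standard consecutive-block partition. Without this, the shift by $q$ coming from $(\sigma_1\dots\sigma_{p-1})^{q+pk}$ could a priori preserve a more flexible bundle decomposition (e.g.\ cosets of a subgroup of $\mathbb{Z}/p$) and would not directly force $d\mid q$. The coprimality conditions $\gcd(r,p)=\gcd(r,q)=1$ and the full-twist assumption on $\beta$ play the key role of rigidifying $T$ relative to the sub-braid $\beta$ on the first $r$ strands, which is essential both for this reduction to the consecutive-block partition and for the termination of the iterative reduction in Step~3 with the prescribed parameters.
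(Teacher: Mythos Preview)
Your Step~1 ($d\mid p$ via Williams) matches the paper, but Steps~2 and~3 contain genuine gaps, and you have missed the paper's main tool entirely.

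For $d\mid q$, the paper does \emph{not} argue combinatorially about block partitions. Instead it performs $(-1/k)$-Dehn surgery along the braid axis $C$, turning $K$ into the closure $K_1$ of $\beta(\sigma_1\dots\sigma_{p-1})^{q}$ and carrying $T$ to a torus $T_1$. Proposition~\ref{isopoty3} rewrites this as a positive braid on $q$ strands with a full twist, so $\beta(K_1)=q$. One then rules out $T_1$ being unknotted (via Lemma~\ref{lemma2} and the observation that $q\nmid p$ would force extra components by Lemma~\ref{lemma1}), and applies Ito's result \cite[Theorem~5.5, Lemma~3.5]{ito2024satellite} to conclude $T_1$ avoids the new braid axis; Williams then gives $d\mid q$. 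Your consecutive-block-partition argument is not a substitute: you yourself flag it as unresolved, and the obstruction you describe (cosets versus intervals) is real. The coprimality of $r$ with $p,q$ is not what fixes this; it is the Dehn-surgery-plus-Ito mechanism.

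Your Step~3 also has a misstatement. The isotopies of Propositions~\ref{isopoty1}--\ref{isopoty4} reduce the number of strands, so they do \emph{not} occur in the complement of the braid axis $A$ of the full $p$-strand braid (Proposition~\ref{isopoty1} fixes only the axis of the sub-braid $\beta$). The paper does not transport $T$ through isotopies alone; it alternates Dehn surgeries along successive braid axes (each removing the full twists at that stage) with the strand-reducing isotopies, obtaining a descending sequence $p,q,a_2,a_3,\dots$ as in the Euclidean algorithm. At each stage Ito's result plus Lemma~\ref{lemma2} is re-applied to keep the torus off the new axis and force $d$ to divide the new parameter, until some $a_i$ drops below $r$. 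Without Dehn surgery you cannot shed the $pk$ full twists, and your iteration never reaches $q'<r$.
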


\begin{proof} 
After doing $(-1/k)$-Dehn surgery along the braid axis $C$ of $\beta(\sigma_1\dots \sigma_{p-1})^{q + pk}$, the knot $K$ becomes the knot $K_1$ given by the closure of the braid
$$B_1 = \beta(\sigma_1\dots \sigma_{p-1})^{q}$$ and $T$ becomes a new torus $T_1$ in the complement of $K_1$.

By Proposition~\ref{isopoty3}, $B_1$ is equivalent to the braid 
$$B_1' = (\sigma_{q-1}\dots \sigma_{1})^{p-q}\beta(\sigma_{1}\dots \sigma_{q-1})^{q}.$$
We have that $q$ can't divide $p$. Because suppose  $q$ divides $p$. 
By hypotheses, $r$ is less than $q$, but the sub-braids $(\sigma_{q-1}\dots \sigma_{1})^{p-q}$ and $(\sigma_{1}\dots \sigma_{q-1})^{q}$ are full twists on $q$, then the closure of $B_1'$ is a link with more than one component by Lemma~\ref{lemma1}, a contradiction. Hence $q$ doesn't divide $p$.

The braid $B_1$ is a positive braid with $q$ strands that has at least one positive full twist on $q$ strands. Therefore, its braid index is equal to $q$ by Franks and Williams \cite[Corollary 2.4]{Franks}.

If the torus $T_1$ is unknotted, then, by Lemma~\ref{lemma2}, $d$ is equal to $q$. But, this would imply that $q$ divides $p$, but we have already excluded this possibility. So, $T_1$ is knotted. Therefore, $T_1$ doesn't intersect the braid axis $C_1$ of $B_1'$ by Ito \cite[Theorem 5.5, Lemma 3.5]{ito2024satellite}. Thus, $d$ also divides $q$.

Consider that $p-q= k_2q + a_2$ with $0<a_2<q$. We see that $a_2$ is also a multiple of $d$. So, $a_2\neq r$. Then, after doing $(-1/(k_2+1))$-Dehn surgery along $C_1$, then $T_1$ becomes a new torus $T_2$ and the braid $B_1'$ becomes the braid  
$$B_2 = (\sigma_{q-1}\dots \sigma_{1})^{a_2}\beta.$$

If $a_2< r$, then we set $p' = q$, $q' = a_2$, and $T' = T_2$. Consider next that $a_2> r$.
We push the sub-braid $(\sigma_{q-1}\dots \sigma_{1})^{a_2}$ around the braid closure so that $B_2$ becomes the braid 
$\beta(\sigma_{q-1}\dots \sigma_{1})^{a_2}$. Then, by Proposition~\ref{isopoty4}, $B_2$ is equivalent to the braid
$$B_2' = \beta(\sigma_{1}\dots \sigma_{a_2-1})^{q-a_2}(\sigma_{a_2-1}\dots \sigma_{1})^{a_2}.$$

We have that $a_2$ can't divide $q$. Because suppose $a_2$ divides $q$. As $r$ is less than $a_2$ and the sub-braids 
$(\sigma_{1}\dots \sigma_{a_2-1})^{q-a_2}$ and $(\sigma_{a_2-1}\dots \sigma_{1})^{a_2}$ are full twists on $a_2$, then the closure of $B_2'$ is a link with more than one component by Lemma~\ref{lemma1}, a contradiction. Hence $a_2$ doesn't divide $q$.

The braid $B_1$ has braid index equal to $a_2$ \cite{Franks}. If the torus $T_2$ is unknotted, then, by Lemma~\ref{lemma2}, $d$ is equal to $a_2$. But, this would imply that $a_2$ divides $q$, but we have already excluded this possibility. So, $T_2$ is knotted. Therefore, $T_2$ doesn't intersect the braid axis $C_2$ of $B_2'$ \cite{ito2024satellite}. Thus, $d$ also divides $a_2$.

Consider that $q-a_2= k_3a_2 + a_3$ with $0<a_3<a_2$. We see that $a_3$ is also a multiple of $d$. So, $a_3\neq r$. Then, after doing $(-1/(k_3+1))$-Dehn surgery along $C_2$, then $T_2$ becomes a new torus $T_3$ and the braid $B_2'$ becomes the braid  
$$B_3 = \beta(\sigma_{1}\dots \sigma_{a_2-1})^{a_3}.$$

If $a_3< r$, then we set $p' = a_2$, $q' = a_3$, and $T' = T_3$. If not, then we consider that $a_3> r$. Since eventually this process finishes, we always find positive numbers $p', q'$ multiples of $d$ and a torus $T'$ with $p'>r>q'\geq d$ such that the closure $K'$ of one of the positive braids $\beta(\sigma_1\dots \sigma_{p'-1})^{q'}$ or $\beta(\sigma_{p'-1}\dots \sigma_{1})^{q'}$ is a generalized $d$-cabling of the core of $T'$. Furthermore, $T'$ doesn't intersect the braid axis of $K'$.
\end{proof}

If we consider $p, q, r, k$ to be positive integers such that $p>q \geq gcd(p, q)\geq r> 1$ and $\beta$ to be a positive braid with $r$ strands. Then, using a similar idea of the proof of Theorem~\ref{satellitecaseone}, then we can prove that 
the link given by the closure of the positive braid $\beta(\sigma_1\dots \sigma_{p-1})^{q + pk}$ is a generalized $gcd(p, q)$-cabling of a link. However, we want to change these conditions a bit to obtain non-satellite knots.

\begin{proposition}\label{propositionTori2}
Let $p, q, r, k$ be positive integers such that $p>q>r> gcd(p, q)> 1$ and $r$ has no common divisor with $p$ and $q$. Consider $\beta$ a positive braid with $r$ strands and at least one positive full twist on $r$ strands. Then, the knot $K$ given by the closure of the positive braid $\beta(\sigma_1\dots \sigma_{p-1})^{q + pk}$ has no essential torus.
\end{proposition}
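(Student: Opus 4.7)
The plan is to argue by contradiction: assume $K$ contains an essential torus $T$, and derive a contradiction by combining Lemma~\ref{lemmaTori1} with the braid-index results of Franks-Williams and Williams (Theorem~\ref{Williams}). First I would observe that $T$ may be isotoped to be disjoint from the braid axis of $K$: this follows from the results of Birman-Menasco and Ito cited earlier, since $K$ is the closure of a positive braid containing a full twist. Next I need to verify that the solid torus $V$ bounded by $T$ which contains $K$ has a non-trivial core. If the core were trivial, Lemma~\ref{lemma2} combined with the Franks-Williams computation (which pins down the braid index of $K$ as $p$, since $\beta(\sigma_1\dots\sigma_{p-1})^{q+pk}$ is a positive braid on $p$ strands containing at least one full twist) would force the cabling number $d$ to equal $p$; but $d$ divides $q < p$, contradiction. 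So the hypotheses of Lemma~\ref{lemmaTori1} are in force.

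Applying Lemma~\ref{lemmaTori1} reduces matters to studying a knot $K'$ realized as the closure of one of the braids
$$\beta(\sigma_1\sigma_2\cdots\sigma_{p'-1})^{q'} \qquad\text{or}\qquad \beta(\sigma_{p'-1}\sigma_{p'-2}\cdots\sigma_1)^{q'},$$
with $p' > r > q' \geq d$ and both $p'$, $q'$ multiples of $d$, and with $K'$ a generalized $d$-cabling of the core of a torus $T'$ disjoint from the braid axis of $K'$. Since $q' \geq d \geq 2$ and $q' < r < p'$, I can apply Proposition~\ref{isopoty1} or Proposition~\ref{isopoty2} (depending on which of the two braid shapes appears) to exhibit $K'$ as the closure of a positive braid on exactly $r$ strands of the form
$$(\sigma_{r-1}\cdots\sigma_{r-q'+1})^{p'-r}\,\beta\,(\sigma_1\cdots\sigma_{r-1})^{q'}\quad\text{or}\quad \beta\,(\sigma_{r-q'+1}\cdots\sigma_{r-1})^{p'-r}(\sigma_{r-1}\cdots\sigma_1)^{q'}.$$
Both contain the sub-braid $\beta$ and hence at least one positive full twist on $r$ strands, so by Franks-Williams \cite[Corollary 2.4]{Franks} the braid index of $K'$ is exactly $r$.

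To finish, I will invoke Williams's theorem. Ruling out that the core of $T'$ is trivial is again routine: otherwise Lemma~\ref{lemma2} would give $\beta(K') = d$, contradicting $r > d$. So the core is a non-trivial knot and Theorem~\ref{Williams} yields $r = d\cdot\beta(\text{core of }T')$, so $d \mid r$. But Lemma~\ref{lemmaTori1} tells us $d \mid p$, and the hypothesis that $r$ shares no common divisor with $p$ forces $d = 1$, contradicting $d \geq 2$.

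The main obstacle in this plan is the bookkeeping to guarantee that after the Dehn-surgery reduction of Lemma~\ref{lemmaTori1}, the resulting knot $K'$ really is the closure of a positive braid on exactly $r$ strands with a full twist. This is precisely where the strand inequalities $1 < q' < r < p'$ provided by Lemma~\ref{lemmaTori1} are essential: they are exactly what is needed to apply Propositions~\ref{isopoty1} and~\ref{isopoty2} and thereby transfer the full twist of $\beta$ into a full twist on all $r$ strands of the new braid, where Franks-Williams and Williams can then be brought to bear.
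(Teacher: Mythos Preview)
Your proof is correct and follows essentially the same route as the paper's: place $T$ off the braid axis via Ito, apply Lemma~\ref{lemmaTori1} to descend to $K'$ with $p'>r>q'$, use Proposition~\ref{isopoty1} or~\ref{isopoty2} to realise $K'$ on $r$ strands with a full twist, and then force $d\mid r$ for the contradiction.

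Two small remarks. First, your preliminary check that the core of $T$ is nontrivial invokes ``$d\mid q$'' before that fact is available---it only emerges from Lemma~\ref{lemmaTori1} itself. This step is in any case unnecessary: an essential torus in a knot exterior in $S^3$ always bounds a knotted solid torus containing the knot (otherwise the complementary solid torus would supply a compressing disk), so the hypothesis of Lemma~\ref{lemmaTori1} is automatic. Second, for the final divisibility $d\mid r$ you appeal to Williams's Theorem~\ref{Williams}, whereas the paper instead reapplies Ito's result to the $r$-strand braid $B''$ to conclude that $T'$ avoids its braid axis; both arguments are valid and yield the same conclusion.
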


\begin{proof} 
Denote by $K$ the closure of the braid $\beta(\sigma_1\dots \sigma_{p-1})^{q + pk}$. Suppose that $K$ has an essential torus $T$.

By Ito \cite[Theorem 5.5, Lemma 3.5]{ito2024satellite}, $T$ doesn't intersect the braid axis $C$ of $\beta(\sigma_1\dots \sigma_{p-1})^{q + pk}$, the core of $T$ is given by a positive braid with at least one full twist, and the knot inside is given by a positive braid. In particular, $K$ is a generalized $d$-cabling of the core of $T$. So, $d$ divides $p$.

By Lemma~\ref{lemmaTori1}, $d$ divides $p, q$ and there are positive numbers $p', q'$ multiples of $d$  with $p'>r>q'\geq d$ and a torus $T'$ such that the closure $K'$ of one, we denote it by $B'$, of the positive braids $\beta(\sigma_1\dots \sigma_{p'-1})^{q'}$ or $\beta(\sigma_{p'-1}\dots \sigma_{1})^{q'}$ is a generalized $d$-cabling of the core of $T'$. Furthermore, $T'$ doesn't intersect the braid axis of $K'$.
 
By Propositions~\ref{isopoty1} or ~\ref{isopoty1}, depending on whether $B'$ is $\beta(\sigma_1\dots \sigma_{p'-1})^{q'}$ or $\beta(\sigma_{p'-1}\dots \sigma_{1})^{q'}$, $B'$ is equivalent to a positive braid $B''$ with $r$ strands and at least one positive full twist on $r$ strands as $\beta$ is a positive braid with $r$ strands and at least one positive full twist on $r$. Thus, by \cite[Corollary 2.4]{Franks}, $B''$ has braid index equal to $r$. 

If the torus $T'$ is unknotted, then, by Lemma~\ref{lemma2}, $d$ is equal to $r$. But, this would imply that $r$ divides $q$ and $p$, which is not possible by hypotheses. So, $T'$ is knotted. Hence $T'$ doesn't intersect the braid axis of $B''$ by \cite{ito2024satellite}. Thus, $d$ also divides $r$. But this contradicts the hypothesis that $r$ has no common divisor with $p$ and $q$. Therefore, $K$ has no essential torus, as we want.
\end{proof}

\begin{proposition}\label{Atoroidalcase2} 
Let $p, q, r, k$ be positive integers such that $p>r > q$, $p-r\geq q$, and $r$ doesn't divide $p$. Furthermore, suppose that $gcd(r, q) = 1$, $gcd(p, q) = 1$, or $gcd(p, r) = 1$.  Let $\beta$ be a positive braid with $r$ strands and at least one full twist on $r$ strands.
Then, the knot given by the closure of the positive braid $$\beta(\sigma_{1}\dots \sigma_{p-1})^{kp+q}$$ is atoroidal.
\end{proposition}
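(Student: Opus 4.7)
The plan is to argue by contradiction, in close analogy with the proof of Proposition~\ref{propositionTori2}. Suppose $K=\overline{\beta(\sigma_1\cdots\sigma_{p-1})^{kp+q}}$ admits an essential torus $T$. Because $k\geq 1$, the exponent satisfies $kp+q\geq p$, so the braid contains a full twist on $p$ strands. By Franks--Williams \cite[Corollary~2.4]{Franks} the braid index of $K$ equals $p$, and by Ito's theorem \cite[Theorem~5.5, Lemma~3.5]{ito2024satellite} the torus $T$ is disjoint from the braid axis $C$, while $K$ is a generalized $d$-cabling of a knotted core $J$ (itself a positive braid with a full twist) for some $d>1$. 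Williams' Theorem~\ref{Williams} then forces $d\mid p$.

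Next, perform $(-1/k)$-Dehn surgery along $C$ to replace $K$ by $K_1=\overline{\beta(\sigma_1\cdots\sigma_{p-1})^q}$, carrying $T$ to an essential torus $T_1$. Proposition~\ref{isopoty1} applies since $0<q\leq r<p$, and rewrites $K_1$ as the closure of the $r$-strand positive braid
\[
B' \;=\; (\sigma_{r-1}\cdots\sigma_{r-q+1})^{p-r}\,\beta\,(\sigma_1\cdots\sigma_{r-1})^q,
\]
which still inherits $\beta$'s full twist on $r$ strands; thus Franks--Williams gives $\beta(K_1)=r$. If $T_1$ were unknotted, Lemma~\ref{lemma2} would force $d=r$, so $r\mid p$, contradicting the hypothesis $r\nmid p$. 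Hence $T_1$ is knotted; Ito's theorem applied to $\overline{B'}$ then places $T_1$ in the complement of the new braid axis, and Williams' theorem yields $d\mid r$. In the case $\gcd(p,r)=1$ this is already the desired contradiction, since $d\mid\gcd(p,r)=1$ but $d>1$.

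For the two remaining cases $\gcd(p,q)=1$ and $\gcd(r,q)=1$, the plan is to also derive $d\mid q$ by iterating the surgery--isotopy reduction. The key arithmetic is that $p-r\geq q$ makes $(\sigma_{r-1}\cdots\sigma_{r-q+1})^{p-r}$ contain a full twist on the last $q$ strands of $B'$. After an additional Dehn surgery along the braid axis of $\overline{B'}$ absorbing the full twist of $\beta$, cyclic conjugation moves those last $q$ strands into the outer position, at which point the destabilization-type isotopies of Section~2 (analogues of Propositions~\ref{isopoty3} and \ref{isopoty4}, with their roles interchanged because we are in the regime $p>r>q$ rather than $p>q>r$ as in Lemma~\ref{lemmaTori1}) produce an equivalent positive braid on $q$ strands with full twist. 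Franks--Williams then pins its braid index at $q$; a final application of Ito's theorem and Williams' theorem gives $d\mid q$. Then $d\mid\gcd(p,q)=1$ in the first remaining case and $d\mid\gcd(r,q)=1$ in the second, each contradicting $d>1$.

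The hard part will be making this last iteration rigorous: one must track the essential torus $T_1$ through the additional Dehn surgery, verifying both that it stays disjoint from each successive braid axis and that it remains incompressible in each intermediate complement, and one must check that the chosen isotopies of Section~2 really apply to the intermediate braid words to deliver the desired $q$-strand positive-braid-with-full-twist form. This adapts the iterative scheme of Lemma~\ref{lemmaTori1} to our opposite regime, and is where the hypothesis $p-r\geq q$ is essential.
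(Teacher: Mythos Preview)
Your argument through $d\mid p$ and $d\mid r$ matches the paper's exactly. The divergence is in how $d\mid q$ is obtained.

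The paper does \emph{not} perform any further surgeries or reductions. Working in the $p$-strand picture $B'=\beta(\sigma_1\cdots\sigma_{p-1})^q$, the circle $C_r$ encircling the first $r$ strands is isotopic to the braid axis of $B''$ (the isotopy of Proposition~\ref{isopoty1} takes place in its complement), so $T'$ misses it. The paper then pushes $C_r$ once anticlockwise around the closure of $B'$; because $p-r\geq q$, it lands between $\beta$ and $(\sigma_1\cdots\sigma_{p-1})^q$ encircling strands $q{+}1$ through $q{+}r$, still disjoint from $T'$. In a single meridian disk of the $p$-strand axis, every component of $T'\cap D$ is a circle enclosing exactly $d$ strands and is disjoint from both the initial and the pushed copy of $C_r$; this forces each such component to lie over one of the blocks $\{1,\dots,q\}$, $\{q{+}1,\dots,r\}$, $\{r{+}1,\dots,q{+}r\}$, $\{q{+}r{+}1,\dots,p\}$, so $d$ divides the size of each block, in particular $d\mid q$. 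That one geometric observation finishes all three gcd cases simultaneously.

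Your iterative plan faces a genuine obstruction. In $B''=(\sigma_{r-1}\cdots\sigma_{r-q+1})^{p-r}\beta(\sigma_1\cdots\sigma_{r-1})^q$ the block $\beta$ is an \emph{arbitrary} positive braid on all $r$ strands, not a power of a full twist. A Dehn surgery along the axis of $B''$ strips off full twists on $r$ strands but leaves a residual piece of $\beta$ still occupying all $r$ strands. The Section~2 isotopies that lower the strand count require the leftover sub-braid to sit on strictly fewer strands than the ambient braid, so you cannot descend to a $q$-strand form this way. In Lemma~\ref{lemmaTori1} the iteration succeeds precisely because the extraneous $\beta$ there has strictly fewer strands than every successive stage; here that inequality fails at the very first step. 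The hypothesis $p-r\geq q$ is used by the paper not to enable another reduction but solely to control where the pushed circle $C_r$ lands.
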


\begin{proof} 
Denote by $K$ the closure of $B = \beta(\sigma_{1}\dots \sigma_{p-1})^{kp+q}$,

Suppose that $K$ has an essential torus. By Ito \cite[Theorem 5.5, Lemma 3.5]{ito2024satellite}, $T$ doesn't intersect the braid axis $C$ of $B$. Therefore, $K$ is a generalized $d$-cabling of the core of $T$. So, $d$ divides $p$.

After doing $(-1/k)$-Dehn surgery along $C$, the braid $B$ becomes the braid
$$B' = \beta(\sigma_{1}\dots \sigma_{p-1})^{q}$$ and $T$ becomes a new torus $T'$ in the exterior of the closure $K'$ of $B'$.

By Proposition~\ref{isopoty1}, there is an isotopy, which happens in the complement of the braid axis of $\beta$, that takes the closure of the braid $B'$ to the braid with $r$ strands
$$B''=(\sigma_{r-1}\dots \sigma_{r-q+1})^{p-r}\beta(\sigma_{1}\dots \sigma_{r-1})^{q}$$
if $q>1$ or
$B''=\beta(\sigma_{1}\dots \sigma_{r-1})$
if $q=1$.

Since $\beta$ is a positive braid with $r$ strands and a full twist on $r$ strands, it follows that $B''$ has braid index equal to $r$ by Franks and Williams \cite[Corollary 2.4]{Franks}.

If $T'$ is trivial, then, by Lemma~\ref{lemma2}, $K'$ has braid index equal to $d$. It implies that $d = r$. However, this is not possible as $r$ doesn't divide $p$. Hence $T'$ is knotted.

The knotted torus $T'$ doesn't intersect the braid axis of $B''$ by \cite{ito2024satellite}. So, $d$ divides $r$. 
Furthermore, $T'$ doesn't intersect the circle $C_{r}$ encircling the strands of the sub-braid $\beta$ of $B'$ since $C_{r}$ is isotopic to the braid axis of $B''$. Then, we push the circle $C_{r}$ anticlockwise once around the braid closure of $B'$ so that it ends up between the 
sub-braids $\beta$ and $(\sigma_{1}\dots \sigma_{p-1})^{q}$ of $B'$ encircling from the $(q+1)$-th strand to the $(q+r)$-th strand as $p-r\geq q $. 
As $T'$ intersects in circles the circle $C_{r}$ in its initial position, which encircles from the $1$-th strand to the $r$-th strand, $T'$ also intersects in circles the circle $C_{q}$ encircling from the $1$-th strand to the $q$-th strand. So, $d$ divides $q$. This implies that $q>1$ otherwise $d$ would be equal to $1$, which is not possible. Hence $d$ divides $p, q$, and $r$.
But this contradicts the hypotheses. Therefore, $K$ is atoroidal.
\end{proof}

\subsection{Non-torus knots given by braids}
In this subsection we prove that under mild conditions knots given by positive braids with at least one full twist are not torus knots.

\begin{proposition}\label{propositionAnnuli}
Let $p, q, r, k$ be positive integers such that $p>q>r> gcd(p, q)> 1$ and $r$ has no common divisor with $p$ and $q$. Consider $\beta$ a positive braid with $r$ strands and at least one positive full twist on $r$ strands. Then, the knot $K$ given by the closure of the positive braid $\beta(\sigma_1\dots \sigma_{p-1})^{q + pk}$ is not torus knot.
\end{proposition}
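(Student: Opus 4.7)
The approach is to mirror the proof of Proposition~\ref{propositionTori2}, replacing essential tori by essential annuli. Suppose for contradiction that $K\cong T(a,b)$ is a torus knot with $\gcd(a,b)=1$ and $1<a\leq b$. Since $k\geq 1$, the positive braid $\beta(\sigma_1\dots\sigma_{p-1})^{q+pk}$ contains a full twist on all $p$ strands, so by Franks--Williams \cite[Corollary~2.4]{Franks} it realizes the braid index of $K$. The braid index of $T(a,b)$ equals $\min(a,b)=a$, forcing $a=p$, hence $K\cong T(p,b)$ with $\gcd(p,b)=1$ and $b\geq p$.

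Next I would produce an essential annulus and push it off the braid axis. The exterior $E(T(p,b))$ is Seifert fibered over a disk with two singular fibers of multiplicities $p$ and $b$, so it contains an essential saturated annulus $A$. Using the annulus analog of Ito's disjointness theorem \cite[Theorem~5.5, Lemma~3.5]{ito2024satellite} (in the spirit of Birman--Menasco's treatment of essential surfaces in the complement of closed braids carrying a full twist), I would isotope $A$ to be disjoint from the braid axis $C$ of the closed braid $\beta(\sigma_1\dots\sigma_{p-1})^{q+pk}$.

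Then I would run the Euclidean descent. Performing $(-1/k)$-Dehn surgery along $C$ eliminates $k$ full twists, transforming $K$ into the knot $K'$ given by the closure of $\beta(\sigma_1\dots\sigma_{p-1})^{q}$, while $A$ survives as an essential annulus $A'\subset E(K')$ because it was disjoint from $C$. Since $r<q<p$, Proposition~\ref{isopoty3} rewrites $K'$ as the closure of a positive braid on $q$ strands with a full twist, so its braid index equals $q$ by Franks--Williams. Iterating this surgery/descent exactly as in Lemma~\ref{lemmaTori1}, and tracking $A'$ through the isotopies of Propositions~\ref{isopoty1}, \ref{isopoty3}, and \ref{isopoty4}, each stage forces the annular cabling parameter $d$ (the analog of the cabling number in Lemma~\ref{lemma2}) to divide the current pair of braid parameters. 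Thus successively $d\mid p$ and $d\mid q$. The descent terminates at a positive braid on $r$ strands with a full twist, whose braid index is $r$; the annular version of Lemma~\ref{lemma2} applied at this final stage would force $d\mid r$, contradicting the hypothesis $\gcd(r,p)=\gcd(r,q)=1$.

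The main obstacle is the annular disjointness step: establishing that an essential annulus in the exterior of the closure of a positive braid with a full twist may be isotoped off the braid axis, and that it remains essential after each Dehn surgery on successive braid axes. The foliation and positivity ingredients underlying Ito's proof for tori are expected to extend, but the bookkeeping for annular boundary components is more delicate than for closed tori. Once this annular disjointness is in hand, the divisibility descent is formally identical to that of Proposition~\ref{propositionTori2}.
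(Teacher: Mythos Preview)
Your proposal has a genuine gap that you yourself flag: the ``annular disjointness step.'' You need an analogue of Ito's result \cite[Theorem~5.5, Lemma~3.5]{ito2024satellite} for essential annuli rather than essential tori, and this is not available in the paper or the cited literature. Worse, the framework you set up does not quite make sense for annuli: for an essential torus $T$ the knot is a generalized $d$-cabling of the core of $T$, and $d$ is the quantity you track through the descent. An essential annulus in a torus knot exterior has no analogous cabling integer---its boundary sits on $\partial N(K)$, not around a companion---so the statement ``$d\mid p$, $d\mid q$, $d\mid r$'' has no clear meaning here, and the appeal to Lemma~\ref{lemma2} at the final stage is not well-posed.

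The paper avoids annuli entirely and instead uses a theorem of Los \cite[Corollary~1.2]{Los}: any two positive braids on the minimal number of strands representing the same link are isotopic \emph{in the complement of the braid axis}. Since $B=\beta(\sigma_1\dots\sigma_{p-1})^{q+pk}$ and the standard braid $(\sigma_1\dots\sigma_{p-1})^{pk+d}$ of $T(p,pk+d)$ are both minimal positive braids for $K$, Los gives an isotopy off the axis $C$. Performing $(-1/k)$-surgery on $C$ therefore turns $K$ into $K'=\widehat{\beta(\sigma_1\dots\sigma_{p-1})^{q}}$ and simultaneously identifies $K'$ with $T(p,d)$. Now Proposition~\ref{isopoty3} rewrites $K'$ as a positive braid on $q$ strands with a full twist, so its braid index is $q$; hence $q\in\{p,d\}$, forcing $q=d$. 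But then $K'=T(p,q)$ has $\gcd(p,q)>1$ components, contradicting that $K'$ is a knot. No descent to $r$ strands, and no surface-tracking beyond the single surgery, is needed.
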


\begin{proof}Consider that $K$ is a torus knot. Since the braid $B = \beta(\sigma_1\dots \sigma_{p-1})^{q + pk}$ contains at least one full twist on
$p$ strands, it follows from \cite[Corollary 2.4]{Franks} that the knot $K$ has braid index equal to $p$. 
So, $K$ is the $(p, pk + d)$-torus knot with $d>0$.
By Los \cite[Corollary 1.2]{Los}, there is an isotopy in the complement of the braid axis $C$ of $B$ that takes $B$ to the standard braid of the torus knot $T(p, pk + d)$, which is $(\sigma_1\sigma_2\dots\sigma_{p-1})^{pk + d}$. 

After $(-1/k)$-Dehn surgery along the braid axis $C$ of $B$, the knot $K$ becomes the knot $K'$ given by the closure of the braid $B' = \beta(\sigma_1\dots \sigma_{p-1})^{q}$ and the torus knot $T(p, pk + d)$  becomes the torus knot $T(p, d)$. 
So, $K'$ is the $(p, d)$-torus knot. 

By Proposition~\ref{isopoty3}, the braid $B'$ is equivalent to the braid
$$B''=(\sigma_{q-1}\dots \sigma_{1})^{p-q}\beta(\sigma_{1}\dots \sigma_{q-1})^{q}.$$
The braid $B''$ is a positive braid with $q$ strands and at least one positive full twist on $q$ strands. 
Therefore, its braid index is equal to $q$ by Franks and Williams \cite[Corollary 2.4]{Franks}. 

As $B'$ has braid index equal to $q$, $K'$ is also the $(q, c)-$torus knot with $c>0$. So, $q$ is equal to $p$ or $d$. By hypothesis, $p\neq q$. So $q =  d$. But this implies that $K'$ is the torus link with $gcd(p, q)$ link components, which is a contradiction. 
\end{proof}
 
\begin{proposition}\label{Annulicase2} 
Let $p, q, r, k$ be positive integers with $p>r > q$ and $\beta$ be a positive braid with $r$ strands and at least one full twist on $r$ strands.
Denote by $\mathcal{C}$ the number of crossings of the braid $\beta$.
If $\mathcal{C}$ is different from $(p-1)(r-q)$, then the knot $K$ given by the closure of the positive braid $$\beta(\sigma_{1}\dots \sigma_{p-1})^{kp+q}$$ is not a torus knot.
\end{proposition}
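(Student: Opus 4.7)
The plan is to argue by contradiction: suppose $K$ is a torus knot. Because $\beta(\sigma_{1}\dots\sigma_{p-1})^{kp+q}$ is a positive braid on $p$ strands containing a full twist on $p$ strands, Franks and Williams \cite[Corollary 2.4]{Franks} forces the braid index of $K$ to equal $p$, so $K = T(p, m)$ for some integer $m > p$ with $\gcd(p,m) = 1$. By Los \cite[Corollary 1.2]{Los}, the braid $B = \beta(\sigma_{1}\dots\sigma_{p-1})^{kp+q}$ is then isotopic in the complement of its braid axis $C$ to the standard torus braid $(\sigma_{1}\dots\sigma_{p-1})^{m}$.

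I would then perform $(-1/k)$-Dehn surgery along $C$. This removes $k$ full twists, so $B$ becomes $B' := \beta(\sigma_{1}\dots\sigma_{p-1})^{q}$ and $T(p,m)$ becomes $T(p, m-kp)$; the closure $K'$ of $B'$ thus equals $T(p, m-kp)$ and is still a knot because surgery along an unknot preserves the component count. Now Proposition~\ref{isopoty1} applies to $B'$ (as $q < r < p$) and rewrites it as an equivalent positive braid on $r$ strands still containing a full twist on $r$ strands, so by \cite[Corollary 2.4]{Franks} the braid index of $K'$ equals $r$. Since the braid index of $T(p, m - kp)$ is $\min(p, m - kp)$ and $r < p$, this forces $m - kp = r$, i.e., $K' = T(p, r)$ (with $\gcd(p,r) = 1$, which is automatic from $\gcd(p,m)=1$).

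The last step is a genus comparison via Bennequin's theorem for positive braids. The positive braid $B'$ has $p$ strands and $\mathcal{C} + (p-1)q$ crossings, so $g(K') = (\mathcal{C} + (p-1)q - p + 1)/2$; on the other hand, $g(T(p, r)) = (p-1)(r-1)/2$. Equating the two values collapses to $\mathcal{C} = (p-1)(r-q)$, contradicting the hypothesis $\mathcal{C} \ne (p-1)(r-q)$, and the theorem follows.

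I expect the most delicate step to be the Dehn surgery argument: it requires Los's theorem to guarantee that the isotopy carrying $B$ to the standard torus braid happens disjointly from the braid axis (so that $(-1/k)$-surgery on the axis tracks $K$ precisely onto $T(p, m-kp)$), and one has to rule out $m - kp \le 1$. The latter is immediate because $B'$ is a nontrivial positive braid of positive Seifert genus, so $K'$ is neither the unknot nor an unlink. Everything else---identifying the braid index of $K'$ via Proposition~\ref{isopoty1}, then reading off the remaining equality from the two genus formulas---is mechanical.
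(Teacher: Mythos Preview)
Your proof is correct and follows essentially the same route as the paper's own argument: assume $K$ is a torus knot, use Franks--Williams to pin down the braid index as $p$, invoke Los to track the standard torus braid through $(-1/k)$-surgery on the axis, then use Proposition~\ref{isopoty1} and Franks--Williams again to see that the braid index of $K'$ is $r$, forcing $K'=T(p,r)$.

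The only difference lies in the final contradiction. The paper observes that the Los isotopy (carried through the surgery) exhibits $B'$ and $(\sigma_1\dots\sigma_{p-1})^{r}$ as conjugate positive braids on $p$ strands, so their exponent sums---hence crossing numbers---agree, yielding $\mathcal{C}+(p-1)q=(p-1)r$. You instead compare Seifert genera via Bennequin's formula for positive braid closures, which produces the identical equation without having to track the Los isotopy through surgery a second time. Both endings are short and valid; yours is arguably a touch more self-contained.
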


\begin{proof}Suppose that $K$ is a torus knot. The braid $B = \beta(\sigma_1\dots \sigma_{p-1})^{q + pk}$ contains at least one full twist on $p$ strands, this implies that the knot $K$ has braid index equal to $p$ \cite{Franks}. 
So, $K$ is the $(p, d)$-torus knot with $d>pk$.

By Los \cite[Corollary 1.2]{Los}, there is an isotopy in the complement of the braid axis $C$ of $B$ that takes $B$ to the standard braid of the torus knot $T(p, d)$, which is $(\sigma_1\sigma_2\dots\sigma_{p-1})^{d}$. 
After $(-1/k)$-Dehn surgery along the braid axis $C$ of $B$, the knot $K$ becomes the knot $K'$ given by the closure of the braid $B' = \beta(\sigma_1\dots \sigma_{p-1})^{q}$. 
So, the knot $K'$ is the $(p, d-pk)$-torus knot. 

By Proposition~\ref{isopoty1}, there is an isotopy that takes the closure of the braid $B'$ to the braid with $r$ strands
$$B''=(\sigma_{r-1}\dots \sigma_{r-q+1})^{p-r}\beta(\sigma_{1}\dots \sigma_{r-1})^{q}.$$
if $q>1$ and
$\beta(\sigma_{1}\dots \sigma_{r-1})$
if $q=1$.

Since $\beta$ is a positive braid with $r$ strands and a full twist on $r$ strands, it follows that $B''$ has braid index equal to $r$ by  \cite[Corollary 2.4]{Franks}.
Hence $K'$ is also the $(r, c)$-torus knot with $c>0$. So, $r$ is equal to $p$ or $d-pk$. By hypotheses, $p\neq r$. So, $r =  d-pk$. But this implies that $K'$ is the torus knot $T(p, r)$.
Furthermore, there is an isotopy in the complement of the braid axis $C$ of $B'$ that takes this braid to the standard braid of the torus knot $T(p, r)$. So the number of crossings of $B'$ should be equal to $(p-1)r$. As the number of crossings of $B'$ is equal to $(p-1)q + \mathcal{C}$ , where $\mathcal{C}$ is the number of crossings of the braid $\beta$, we have that  $(p-1)q + \mathcal{C} = (p-1)r$, which implies that $\mathcal{C} = (p-1)(r-q)$, a contradiction.
\end{proof}

\subsection{Hyperbolic knots given by braids}
In this subsection we find some new mild conditions on knots given by positive braids with at least one full twist to guarantee they are hyperbolic.

\begin{theorem}\label{Hyperbolicity1}
Let $p, q, r, k$ be positive integers with $p>q\geq r >1$ and $gcd(p, q)=1$. Consider $\beta$ a non-trivial positive braid with $r$ strands. Then, the knot $K$ given by the closure of the positive braid $$\beta(\sigma_1\dots \sigma_{p-1})^{q + pk}$$ is hyperbolic.
\end{theorem}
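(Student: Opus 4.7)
The plan is to show that $K$ is a non-trivial knot that is neither a torus knot nor a satellite knot, and then to invoke Thurston's hyperbolization. Since $k\geq 1$ we have $q+pk\geq p+q>p$, so $\beta(\sigma_1\dots\sigma_{p-1})^{q+pk}$ is a positive braid on $p$ strands containing at least one full twist; by Franks and Williams \cite[Corollary 2.4]{Franks} the braid index of $K$ is exactly $p$, and in particular $K$ is not the unknot.

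For the non-torus step, I would assume for contradiction that $K=T(p,d)$; then $d>p$, and by Los's theorem \cite[Corollary 1.2]{Los} the braid $\beta(\sigma_1\dots\sigma_{p-1})^{q+pk}$ is braid-isotopic in the complement of its braid axis to the standard torus braid $(\sigma_1\dots\sigma_{p-1})^d$. Performing $(-1/k)$-Dehn surgery along the axis then gives that $K'$, the closure of $\beta(\sigma_1\dots\sigma_{p-1})^q$, equals $T(p,d-pk)$. Since $r\leq q$, Proposition~\ref{isopoty3} rewrites $K'$ as the closure of a positive braid on $q$ strands containing a full twist, so Franks-Williams forces the braid index of $K'$ to be $q$, hence $d-pk=q$ and $K'=T(p,q)$. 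Finally, $\beta(\sigma_1\dots\sigma_{p-1})^q$ and $(\sigma_1\dots\sigma_{p-1})^q$ are both positive braids on $p$ strands with the same closure $T(p,q)$, so by the Bennequin-Stallings genus formula for positive braid closures they must have the same number of crossings; this forces $c(\beta)=0$, contradicting the non-triviality of $\beta$.

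For the non-satellite step, I would suppose $T$ is an essential torus in the complement of $K$. By Ito \cite[Theorem 5.5, Lemma 3.5]{ito2024satellite}, $T$ is disjoint from the braid axis $C$ and $K$ is a generalized $d$-cabling of the core of the solid torus bounded by $T$ with $d\geq 2$; Williams' Theorem~\ref{Williams}, together with Lemma~\ref{lemma2} in the unknotted-core case, then yields $d\mid \beta(K)=p$. Performing $(-1/k)$-Dehn surgery along $C$ converts $T$ to an essential torus $T'$ in the complement of $K'$ that still exhibits $K'$ as a generalized $d$-cabling. By Proposition~\ref{isopoty3}, $K'$ is also the closure of a positive braid on $q$ strands with a full twist, so Ito's theorem applied to this new presentation forces $T'$ to miss the new braid axis and hence $d\mid \beta(K')=q$. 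Since $\gcd(p,q)=1$, this gives $d=1$, contradicting $d\geq 2$.

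The main technical obstacle I expect is ensuring the survival of the essential torus and of its cabling index $d$ under the $(-1/k)$-Dehn surgery on the braid axis, and the subsequent re-application of Ito's theorem with respect to the new braid axis produced by Proposition~\ref{isopoty3}; this mirrors the strategy already used in Lemma~\ref{lemmaTori1} and Proposition~\ref{Atoroidalcase2} of this paper. Once the divisibility constraints $d\mid p$ and $d\mid q$ are in place, $\gcd(p,q)=1$ closes the non-satellite step, and combining this with the non-torus step and Thurston's hyperbolization theorem gives the hyperbolicity of $K$.
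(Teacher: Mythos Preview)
Your proposal is correct and follows precisely the strategy that the paper invokes by reference: the paper's proof simply cites \cite[Theorem~1.2]{MR4494619} (extended to $k\ge 1$ via Ito's \cite[Theorem~5.5, Lemma~3.5]{ito2024satellite}), and what you have written out is exactly that argument, using the same toolkit (Franks--Williams for braid index, Los for conjugacy to the standard torus braid, $(-1/k)$-surgery on the axis, Proposition~\ref{isopoty3} to drop to $q$ strands, and Ito/Williams/Lemma~\ref{lemma2} for the divisibility constraints) that the paper deploys in Propositions~\ref{propositionTori2}, \ref{Atoroidalcase2}, \ref{propositionAnnuli}, and \ref{Annulicase2}. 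The one point you correctly flag as the technical obstacle---that $T'$ could become unknotted after surgery---is indeed handled exactly as in Lemma~\ref{lemmaTori1}: if the core of $T'$ is unknotted then Lemma~\ref{lemma2} gives $d=\beta(K')=q$ directly, and otherwise $T'$ is essential and Ito plus Williams give $d\mid q$; either way $d\mid q$, and together with $d\mid p$ and $\gcd(p,q)=1$ this closes the non-satellite step.
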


\begin{proof} 
The case $k\geq 2$ follows from de Paiva \cite[Theorem 1.2]{MR4494619}. The case $k\ge 1$ follows from the same idea of the proof of \cite[Theorem 1.2]{MR4494619} if we consider \cite[Theorem 5.5, Lemma 3.5]{ito2024satellite}.
\end{proof}

\begin{theorem}\label{Hyperbolicity2}
Let $p, q, r, k$ be positive integers such that $p>q>r> gcd(p, q)> 1$ and $r$ has no common divisor with $p$ and $q$. Consider $\beta$ a positive braid with $r$ strands and at least one positive full twist on $r$ strands. Then, the knot $K$ given by the closure of the positive braid $$\beta(\sigma_1\dots \sigma_{p-1})^{q + pk}$$ is hyperbolic.
\end{theorem}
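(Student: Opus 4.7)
The plan is to combine Thurston's trichotomy (every non-trivial knot in $S^3$ is either a torus knot, a satellite knot, or a hyperbolic knot) with the two structural results proved immediately above, namely Proposition~\ref{propositionTori2} and Proposition~\ref{propositionAnnuli}. Under the hypotheses on $p,q,r,k$ and $\beta$ stated in the theorem, these two propositions apply verbatim: the first gives that the closure $K$ of $\beta(\sigma_1\dots\sigma_{p-1})^{q+pk}$ contains no essential torus, and the second gives that $K$ is not a torus knot. Thus once we know $K$ is a non-trivial knot, hyperbolicity follows immediately from geometrization.

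First I would verify that $K$ is indeed a knot rather than a link with several components. Since the presence of the full twist factor $(\sigma_1\dots\sigma_{p-1})^{q+pk}$ with $\gcd(p,q+pk)=\gcd(p,q)>1$ does \emph{not} by itself force a single component, this has to be read off from the combined braid, but the theorem statement already asserts that $K$ is a knot, so this is taken as part of the hypothesis (or, alternatively, one checks that the permutation induced on the strands by $\beta(\sigma_1\dots\sigma_{p-1})^{q+pk}$ is a single $p$-cycle; this is a combinatorial check). Non-triviality of $K$ is automatic, since any positive braid with a full twist has non-zero genus (its closure is a fibered link whose Seifert surface from Seifert's algorithm realizes the genus), so $K$ is certainly not the unknot.

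Next I would invoke Proposition~\ref{propositionAnnuli}: the hypotheses $p>q>r>\gcd(p,q)>1$ and that $r$ has no common divisor with $p$ or $q$, together with $\beta$ being a positive braid on $r$ strands containing a positive full twist, are exactly those of that proposition, so $K$ is not a torus knot. Then I would invoke Proposition~\ref{propositionTori2} under the same hypotheses to conclude that $K$ has no essential torus, hence $K$ is not a satellite knot.

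Finally, by Thurston's geometrization for knot complements in $S^3$, a non-trivial knot that is neither a torus knot nor a satellite knot is hyperbolic; therefore $K$ is hyperbolic. The main conceptual obstacle has already been absorbed into the two preceding propositions, so the only thing one must be careful about in writing this proof is to confirm that the hypotheses of both propositions are satisfied simultaneously and that $K$ really is a single non-trivial knot; no further geometric argument is needed in this theorem itself.
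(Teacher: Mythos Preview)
Your proposal is correct and follows essentially the same approach as the paper: invoke Proposition~\ref{propositionTori2} to exclude satellite, Proposition~\ref{propositionAnnuli} to exclude torus, check non-triviality, and conclude by Thurston's trichotomy. The only cosmetic difference is that the paper establishes non-triviality via the braid index (Franks--Williams gives braid index $p>1$), whereas you use the genus of a positive braid closure; both are standard and immediate.
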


\begin{proof}
The knot $K$ is given by a positive braid with $p$ strands and at least one full twist on $p$ strands. By Franks and Williams \cite[Corollary~2.4]{Franks}, its braid index is equal to $p>1$. Hence, it can't be a trivial knot.   
Furthermore, $K$ is not a satellite knot by Proposition~\ref{propositionTori2}. By Proposition~\ref{propositionAnnuli}, $K$ is not a torus knot either. 
Therefore, $K$ is a hyperbolic knot.
\end{proof}

\begin{theorem}\label{Hyperbolicity3}
Let $p, q, r, k$ be positive integers such that $p>r > q$, $p-r\geq q$, and $r$ doesn't divide $p$. Furthermore, suppose that $gcd(r, q) = 1$, $gcd(p, q) = 1$, or $gcd(p, r) = 1$.  
Let $\beta$ be a positive braid with $r$ strands and at least one full twist on $r$ strands with crossing number different from $(p-1)(r-q)$. Then, the knot $K$ given by the closure of the positive braid $$\beta(\sigma_{1}\dots \sigma_{p-1})^{kp+q}$$ is hyperbolic.
\end{theorem}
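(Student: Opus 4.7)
The plan is to assemble the three geometric possibilities for a nontrivial knot in $S^3$ (torus, satellite, hyperbolic) and rule out the first two using the propositions already established in this subsection, leaving hyperbolicity as the only option via Thurston's geometrization.

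First I would verify nontriviality. The braid $\beta(\sigma_1\dots\sigma_{p-1})^{kp+q}$ is a positive braid on $p$ strands containing a full twist on $p$ strands (since $\beta$ already contributes a full twist on its $r<p$ strands, and $(\sigma_1\dots\sigma_{p-1})^{kp+q}$ contributes at least $k$ full twists on all $p$ strands when $kp+q\geq p$; in any case the braid $(\sigma_1\dots\sigma_{p-1})^p$ appears). By Franks and Williams \cite[Corollary 2.4]{Franks}, the braid index of $K$ equals $p>1$, so $K$ is not the unknot.

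Next I would invoke Proposition~\ref{Atoroidalcase2} verbatim: the hypotheses $p>r>q$, $p-r\geq q$, $r\nmid p$, and at least one of $\gcd(r,q)=1$, $\gcd(p,q)=1$, $\gcd(p,r)=1$ are exactly those of Theorem~\ref{Hyperbolicity3}, and $\beta$ is a positive braid on $r$ strands with at least one full twist on $r$ strands. Therefore $K$ is atoroidal, i.e.\ is not a satellite knot. Then I would invoke Proposition~\ref{Annulicase2}: the hypotheses $p>r>q$ and "crossing number of $\beta$ different from $(p-1)(r-q)$" are precisely the ones required, so $K$ is not a torus knot.

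Finally, since $K$ is a nontrivial knot in $S^3$ that is neither a torus knot nor a satellite knot, Thurston's geometrization theorem for knot complements implies that $S^3\setminus K$ admits a complete hyperbolic structure of finite volume, so $K$ is hyperbolic. The proof is essentially just bookkeeping: all the real work has already been done in Propositions~\ref{Atoroidalcase2} and \ref{Annulicase2}. The only subtle point to double-check is that the hypotheses of Theorem~\ref{Hyperbolicity3} genuinely imply those of both propositions simultaneously, and that the braid index argument for nontriviality really applies (which it does because $p\geq 2$).
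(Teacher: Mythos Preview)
Your proposal is correct and follows essentially the same approach as the paper: nontriviality via the Franks--Williams braid index result, then Proposition~\ref{Atoroidalcase2} to exclude satellite, Proposition~\ref{Annulicase2} to exclude torus, and Thurston's trichotomy to conclude hyperbolicity. One small remark: your parenthetical about $\beta$ contributing a full twist on $r$ strands is irrelevant to getting a full twist on $p$ strands---the full twist on $p$ strands comes entirely from $(\sigma_1\dots\sigma_{p-1})^{kp+q}$ with $k\geq 1$, which you do note at the end of that sentence.
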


\begin{proof} 
The knot $K$ has braid index equal to $p$ \cite{Franks}. As $p>1$, $K$ is a non-trivial knot. Furthermore, 
it follows from Proposition~\ref{Atoroidalcase2} that $K$ is not a satellite knot and from Proposition~\ref{Annulicase2} that $K$ is not a torus knot. Therefore, $K$ is hyperbolic.
\end{proof}

\section{Hyperbolic V-links and T-links}\label{section7}

In this section we apply the conditions found in section 5 on positive braids with a full twist to ensure their closure are hyperbolic to prove that some V-links and T-links are hyperbolic.

\begin{theorem}\label{Hyperbolicityfirstcase}
Let $r_1, \dots, r_n, s_1, \dots, s_n, u_1, \dots, u_m, v_1, \dots, v_m, p, q, k$ be positive integers such that $2\leq r_1< \dots < r_{n} \leq q$, $2\leq u_1< \dots < u_{m} \leq q < p$, and $gcd(p, q)=1$. Then, the V-knot
$$V_1 = V((u_1,\overline{v_1}), \dots, (u_{m},\overline{v_{m}}), (r_1,s_1), \dots, (r_{n},s_{n}), (p, kp+q))$$ 
is hyperbolic.
\end{theorem}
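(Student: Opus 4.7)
The plan is to reduce to Theorem~\ref{Hyperbolicity1} by rewriting $V_1$ as the closure of a braid of the form $\beta(\sigma_1\sigma_2\dots\sigma_{p-1})^{kp+q}$, where $\beta$ is a non-trivial positive braid on $r$ strands with $r\leq q$. Once such a presentation is in hand, the hypotheses $p>q\geq r>1$ and $\gcd(p,q)=1$ required by Theorem~\ref{Hyperbolicity1} are all satisfied, and hyperbolicity of $V_1$ follows immediately.

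The V-link braid is
$$V_1=\underbrace{\prod_{i=1}^{m}(\sigma_{p-1}\sigma_{p-2}\dots\sigma_{p-u_i+1})^{v_i}}_{A}\underbrace{\prod_{j=1}^{n}(\sigma_{1}\sigma_{2}\dots\sigma_{r_j-1})^{s_j}}_{B}(\sigma_{1}\sigma_{2}\dots\sigma_{p-1})^{kp+q},$$
with a right-side block $A$ lying on the last $u_m\leq q$ strands, a left-side block $B$ lying on the first $r_n\leq q$ strands, and a final factor that contains at least one full twist $\Delta^{2}=(\sigma_1\sigma_2\dots\sigma_{p-1})^{p}$ since $k\geq 1$. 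My plan is to use the centrality of $\Delta^{2}$ in the braid group $B_p$, combined with cyclic conjugation of the braid closure, to transport the right-side block $A$ across the full-twist factor and merge it with $B$ into a single positive block on the first $r\leq q$ strands. Lemma~\ref{otherbraid}, which records the alternative Form~2 presentation of a V-link obtained by rotating the braid diagram (equivalently, conjugating by the Garside half-twist), supplies the template for moving $A$ from the right side; the isotopies of Section~2 then help to organise the resulting generator positions.

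The main obstacle is executing this consolidation explicitly, because the blocks $A$ and $B$ do not commute in general: their generator ranges can overlap when $r_n+u_m>p$, so they cannot simply be shuffled past one another. The consolidation must therefore route $A$ through the central full-twist factor, which is precisely why the hypothesis $k\geq 1$ is indispensable. Once this rewriting is completed and $\beta$ is identified as the resulting non-trivial positive braid on $r\leq q$ strands, Theorem~\ref{Hyperbolicity1} delivers the conclusion that $V_1$ is hyperbolic.
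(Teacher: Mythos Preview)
Your overall strategy—rewrite $V_1$ as $\beta(\sigma_1\cdots\sigma_{p-1})^{kp+q}$ with $\beta$ a non-trivial positive braid on at most $q$ strands and then invoke Theorem~\ref{Hyperbolicity1}—is exactly what the paper does. The paper dispatches the rewriting in one line: it pushes the block $A$ (the $(u_i,\overline{v_i})$ sub-braid) anticlockwise once around the braid closure so that it sits next to $B$, yielding $\beta$ on at most $q$ strands.

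Where your sketch falls short is in the mechanism you propose for the consolidation. Centrality of $\Delta^{2}$ only lets you commute $A$ past a full twist; it leaves $A$ on the \emph{same} strands (the last $u_m$), so by itself it does nothing to bring $A$ onto the leftmost $q$ strands where $B$ lives and where Theorem~\ref{Hyperbolicity1} needs $\beta$ to sit. What actually repositions $A$ is the row-shift identity $\sigma_j(\sigma_1\cdots\sigma_{p-1})=(\sigma_1\cdots\sigma_{p-1})\sigma_{j-1}$ for $j\geq 2$: after one cyclic move and $p-u_m$ applications of this identity, $A$ slides from the rightmost $u_m$ strands to the leftmost $u_m$ strands while the factor $(\sigma_1\cdots\sigma_{p-1})^{kp+q}$ is preserved. (Equivalently, one can conjugate by the half-twist $\Delta$ as in Lemma~\ref{otherbraid}, which you mention but do not deploy.) This rearrangement works regardless of $k$; the hypothesis $k\geq 1$ is needed only to meet the assumptions of Theorem~\ref{Hyperbolicity1}, not to carry out the isotopy. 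Once you make the repositioning explicit along these lines, your argument and the paper's coincide.
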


\begin{proof} 
The V-knot $V_1$ is given by the braid
\begin{align*}
&(\sigma_{p-1}\sigma_{p-2}\dots\sigma_{{p-u_1+1}})^{v_1}\dots(\sigma_{p-1}\sigma_{p-2}\dots\sigma_{{p-u_m+1}})^{v_m}(\sigma_1\sigma_2\dots\sigma_{r_1-1})^{s_1}\dots(\sigma_{1}\sigma_{2}\dots\sigma_{r_n-1})^{s_n}\\
&(\sigma_{1}\sigma_{2}\dots\sigma_{{p-1}})^{kp+q}.
\end{align*}
We push the sub-braid provided by the parameters $(u_1,\overline{v_1}), \dots, (u_{m},\overline{v_{m}})$ anticlockwise once around the braid closure to obtain a braid of the form 
$\beta(\sigma_1\dots \sigma_{p-1})^{q + pk}$ with $\beta$ a positive braid with at most $q$ strands satisfying the hypotheses of Theorem~\ref{Hyperbolicity1}. Hence $V_1$ is a hyperbolic knot.
\end{proof}

\begin{corollary}\label{C1} 
Let $r_1, \dots, r_n, s_1, \dots, s_n, u_1, \dots, u_m, v_1, \dots, v_m, p, q, k$ be positive integers such that $2\leq r_1< \dots < r_{n} \leq q$, $2\leq u_1< \dots < u_{m} \leq q < p$, and $gcd(p, q)=1$. Then, the T-knots
\begin{align*}
&T( (r_1,s_1), \dots, (r_{n},s_{n}), (p, kp+q-u_m), (p+ v_m, u_m - u_{m-1}), \dots, (p+v_m+ \dots + v_{2}, u_2-u_1),\\
&(p+ v_m+\dots + v_{1}, u_1)) \text{ and } T((u_1, v_1), \dots, (u_{m}, v_{m}), (kp+q, p-r_n), (kp+q+ s_n, r_n - r_{n-1}),\\
&\dots, (kp+q+s_n+ \dots + s_{2}, r_2-r_1), (kp+q+ s_n+ \dots + s_{1}, r_1))
\end{align*}
are hyperbolic.
\end{corollary}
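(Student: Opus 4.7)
The plan is to reduce this corollary to the hyperbolicity of an associated V-knot and then translate the result using the equivalence established earlier in the paper. Set
$$V_1 = V((u_1,\overline{v_1}), \dots, (u_{m},\overline{v_{m}}), (r_1,s_1), \dots, (r_{n},s_{n}), (p, kp+q)).$$
First I would verify that the hypotheses of Theorem~\ref{Hyperbolicityfirstcase} hold for $V_1$: the conditions $2\leq r_1<\dots<r_n\leq q$, $2\leq u_1<\dots<u_m\leq q<p$, and $\gcd(p,q)=1$ appearing in that theorem are exactly the hypotheses of this corollary. Hence Theorem~\ref{Hyperbolicityfirstcase} applies and $V_1$ is a hyperbolic knot.

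Next I would apply Theorem~\ref{equivalence} to $V_1$, i.e.\ with the last parameter of the V-link taken to be $kp+q$ in place of the generic $q$ appearing in that theorem. The standing assumptions there require $2\leq u_1<\dots<u_m\leq p\leq kp+q$ and $2\leq r_1<\dots<r_n<p$. Both are immediate from the hypotheses of the corollary: $u_m\leq q<p$ gives $u_m\leq p$, while $k\geq 1$ and $q\geq 1$ force $p\leq kp\leq kp+q$; similarly $r_n\leq q<p$. Theorem~\ref{equivalence} then yields that $V_1$ is equivalent to both of the T-knots written in the statement of the corollary (this is just the substitution $q\mapsto kp+q$ in the two T-link formulas of Theorem~\ref{equivalence}).

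Since hyperbolicity is an invariant of the knot complement up to homeomorphism, and the two T-knots are equivalent to $V_1$, both T-knots are hyperbolic. The argument is entirely a bookkeeping reduction: all the substantive content has been absorbed into Theorems~\ref{Hyperbolicityfirstcase} and~\ref{equivalence}, so there is no genuine obstacle. The only point one has to be careful about is checking the two sets of inequalities above before invoking each theorem, which is routine given the standing assumptions of the corollary.
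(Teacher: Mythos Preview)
Your proposal is correct and follows exactly the same approach as the paper, which simply states that the corollary follows from Theorems~\ref{Hyperbolicityfirstcase} and~\ref{equivalence}. Your version merely spells out the routine verification of hypotheses that the paper leaves implicit.
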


\begin{proof}
It follows from Theorems~\ref{Hyperbolicityfirstcase} and \ref{equivalence}.
\end{proof} 

In particular, Corollary~\ref{C1} generalizes \cite[Corollary 1.3]{MR4494619}.

\begin{theorem}\label{Hyperbolicitysecondcase}
Let $r_1, \dots, r_n, s_1, \dots, s_n, u_1, \dots, u_m, v_1, \dots, v_m, p, q, k$ be positive integers such that $2\leq r_1< \dots < r_{n} <q$, $2\leq u_1< \dots < u_{m} <q < p$, and $gcd(p, q)>1$.
\begin{itemize}
\item If  $s_{n}\geq r_{n}>gcd(p, q)$ and $gcd(p, q, r_n)=1$, then the V-knot
$$V_1 = V((r_1,s_1), \dots, (r_{n},s_{n}), (p, kp+q))$$
is hyperbolic.
\item If $v_{m}\geq u_{m}>gcd(p, q)$ and $gcd(p, q, u_{m})=1$ then the V-knot
$$V_2 = V((u_1,\overline{v_1}), \dots, (u_{m},\overline{v_{m}}), (p, kp+q))$$ 
is hyperbolic.
\end{itemize}
\end{theorem}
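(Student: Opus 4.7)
My plan is to reduce each case to Theorem~\ref{Hyperbolicity2} by presenting $V_i$ as the closure of a positive braid of the form $\beta(\sigma_1\dots\sigma_{p-1})^{kp+q}$, with $\beta$ a positive braid on the leftmost $r$ strands (for $r=r_n$ in the first case and $r=u_m$ in the second) containing at least one positive full twist on $r$ strands. In the first case this is immediate from the definition of V-link: $V_1$ is already the closure of such a braid with $\beta=(\sigma_1\dots\sigma_{r_1-1})^{s_1}\dots(\sigma_1\dots\sigma_{r_n-1})^{s_n}$, whose last factor contains the full twist $(\sigma_1\dots\sigma_{r_n-1})^{r_n}$ since $s_n\geq r_n$. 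The numerical conditions $p>q>r_n>\gcd(p,q)>1$ and $\gcd(p,q,r_n)=1$ then follow at once from the hypotheses, so Theorem~\ref{Hyperbolicity2} applies with $r=r_n$.

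The second case requires one extra manoeuvre, because the standard V-link braid presentation of $V_2$ places the $u_i$-factors on the \emph{rightmost} $u_m$ strands rather than the leftmost. I would push this sub-braid anticlockwise around the braid closure, in the same spirit as the proof of Theorem~\ref{Hyperbolicityfirstcase}; at the level of braid words this is conjugation by $\tau^{-(p-u_m)}$, where $\tau=\sigma_1\dots\sigma_{p-1}$, which preserves the closure, fixes the torus factor $\tau^{kp+q}$, and shifts each $\sigma_i$ in the sub-braid to $\sigma_{i-(p-u_m)}$ via the identity $\tau^{-1}\sigma_i\tau=\sigma_{i-1}$ iterated $p-u_m$ times. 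The outcome is
\[
\gamma=(\sigma_{u_m-1}\dots\sigma_{u_m-u_1+1})^{v_1}\dots(\sigma_{u_m-1}\dots\sigma_1)^{v_m},
\]
a positive braid on the leftmost $u_m$ strands whose last factor $(\sigma_{u_m-1}\dots\sigma_1)^{v_m}$ contains the full twist $(\sigma_{u_m-1}\dots\sigma_1)^{u_m}$ on $u_m$ strands because $v_m\geq u_m$. Hence $V_2$ is the closure of $\gamma(\sigma_1\dots\sigma_{p-1})^{kp+q}$, and the hypotheses $p>q>u_m>\gcd(p,q)>1$ and $\gcd(p,q,u_m)=1$ are precisely the inputs needed for Theorem~\ref{Hyperbolicity2} with $r=u_m$.

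The main bookkeeping obstacle is ensuring the iterated shift is valid throughout, since the identity $\tau^{-1}\sigma_i\tau=\sigma_{i-1}$ breaks down at $i=1$. I would track the smallest generator index appearing in the sub-braid: it starts at $p-u_m+1$ and decreases by one with each conjugation by $\tau^{-1}$, so it stays $\geq 2$ for the first $p-u_m-1$ shifts and drops to $1$ only at the $(p-u_m)$-th and final shift. This confirms that no premature $\sigma_1$ is ever produced and the whole sub-braid is transported intact onto the leftmost $u_m$ strands, completing the reduction.
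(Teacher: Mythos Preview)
Your proposal is correct and follows the same approach as the paper, which simply records that the result ``directly follows from Theorem~\ref{Hyperbolicity2}.'' Your write-up supplies the details the paper omits, in particular the conjugation argument in the second case that moves the right-sided sub-braid onto the leftmost $u_m$ strands so that Theorem~\ref{Hyperbolicity2} applies verbatim; the index bookkeeping you give is accurate.
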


\begin{proof}
It directly follows from Theorem~\ref{Hyperbolicity2}.
\end{proof}

We can also obtain hyperbolic V-links $V((u_1,\overline{v_1}), \dots, (u_{m},\overline{v_{m}}), (r_1,s_1), \dots, (r_{n},s_{n}), (p, kp+q))$ from Theorem~\ref{Hyperbolicity2} for the parameters that it is possible to push the sub-braid provided by the parameters $(u_1,\overline{v_1}), \dots, (u_{m},\overline{v_{m}})$ clockwise around the braid closure so that it becomes above the sub-braid provided by the parameters $(r_1,s_1), \dots, (r_{n},s_{n})$ so that we obtain a braid of the form $\beta(\sigma_{1}\dots \sigma_{p-1})^{kp+q}$ with $\beta$ a positive braid with $r_{n}$ strands that satisfies the hypotheses of Theorem~\ref{Hyperbolicity2}. Similarly, in some cases it is possible to push the sub-braid provided by the parameters 
$(r_1,s_1), \dots, (r_{n},s_{n})$ anticlockwise around the braid closure so that it becomes below the sub-braid provided by the parameters $(u_1,\overline{v_1}), \dots, (u_{m},\overline{v_{m}})$ so that we obtain a braid of the form $\beta(\sigma_{1}\dots \sigma_{p-1})^{kp+q}$ with $\beta$ a positive braid with $u_{m}$ strands that satisfies the hypotheses of Theorem~\ref{Hyperbolicity2}.

\begin{corollary}\label{C2} 
Let $r_1, \dots, r_n, s_1, \dots, s_n, u_1, \dots, u_m, v_1, \dots, v_m, p, q, k$ be positive integers such that $2\leq r_1< \dots < r_{n} <q$, $2\leq u_1< \dots < u_{m} <q < p$, and $gcd(p, q)>1$.
\begin{itemize}
\item If  $s_{n}\geq r_{n}>gcd(p, q)$ and $gcd(p, q, r_n)=1$, then the T-knots
\begin{align*}
&T((r_1,s_1), \dots, (r_{n},s_{n}), (p, kp+q)) \text{ and } T((kp+q, p-r_n), (kp+q+ s_n, r_n - r_{n-1}), \dots, \\
&(kp+q+s_n+ \dots + s_{2}, r_2-r_1), (kp+q+ s_n+ \dots + s_{1}, r_1))
\end{align*} 
are hyperbolic.
\item If $v_{m}\geq u_{m}>gcd(p, q)$ and $gcd(p, q, u_{m})=1$, then the T-knots
\begin{align*}
&T((u_1, v_1), \dots, (u_{m}, v_{m}), (kp+q, p)) \text{ and }T((p, kp+q-u_m), (p+ v_m, u_m - u_{m-1}), \dots,\\
&(p+v_m+ \dots + v_{2}, u_2-u_1),(p+ v_m+\dots + v_{1}, u_1))
\end{align*}
are hyperbolic.
\end{itemize}
\end{corollary}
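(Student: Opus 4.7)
The plan is to deduce this corollary as an essentially immediate consequence of Theorem~\ref{Hyperbolicitysecondcase} combined with the T-link/V-link dictionary from Theorem~\ref{equivalence} (equivalently Theorem~\ref{equivalenceTV-links}). Both bullets are parallel, so I would handle them one after the other via the same template: first invoke Theorem~\ref{Hyperbolicitysecondcase} to certify hyperbolicity of a specific V-knot, then re-describe that same knot as two T-knots using Theorem~\ref{equivalence}. Because hyperbolicity is a topological invariant, the corresponding T-knots inherit hyperbolicity.

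For the first bullet, I would set $m=0$ in the V-link formalism so there are no barred parameters, and consider the V-knot
$$V_1 = V((r_1,s_1), \dots, (r_n,s_n), (p, kp+q)).$$
The hypotheses $s_n \geq r_n > \gcd(p,q)$, $\gcd(p,q,r_n)=1$, and $2\leq r_1<\dots<r_n<q<p$ with $\gcd(p,q)>1$ are exactly what the first bullet of Theorem~\ref{Hyperbolicitysecondcase} requires once one reads $kp+q$ as the V-link ``$q$''. Before applying that result, I would quickly verify the V-link admissibility inequality $p \leq kp+q$, which follows from $k\geq 1$ and $q>0$. Then Theorem~\ref{equivalence}, specialized to $m=0$, collapses the stated equivalence to
$$V_1 \;\simeq\; T((r_1,s_1), \dots, (r_n,s_n), (p,kp+q))$$
and
$$V_1 \;\simeq\; T((kp+q,p-r_n), (kp+q+s_n,r_n-r_{n-1}), \dots, (kp+q+s_n+\dots+s_1, r_1)),$$
which are precisely the two T-knots of the first bullet; both are therefore hyperbolic.

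For the second bullet I would argue symmetrically with $n=0$, applying the second bullet of Theorem~\ref{Hyperbolicitysecondcase} to
$$V_2 = V((u_1,\overline{v_1}), \dots, (u_m,\overline{v_m}), (p,kp+q))$$
and then invoking Theorem~\ref{equivalence} with $n=0$ to identify $V_2$ with the pair
$$T((p, kp+q-u_m), (p+v_m, u_m-u_{m-1}), \dots, (p+v_m+\dots+v_1, u_1))$$
and
$$T((u_1,v_1), \dots, (u_m,v_m), (kp+q, p)).$$
These are exactly the two T-knots in the second bullet, and again hyperbolicity passes across the equivalence.

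The argument involves no genuine obstacle; the only thing demanding care is the bookkeeping when specializing Theorem~\ref{equivalence} to $m=0$ or $n=0$ (empty products of sub-braids and the corresponding telescoping of the T-link parameter list), together with the quick check that $p\leq kp+q$ so the V-link is well-formed in the sense of its definition. Once those are observed, the corollary is a one-line application of the two cited theorems.
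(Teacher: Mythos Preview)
Your proposal is correct and follows exactly the paper's own approach: the paper's proof consists of the single line ``It follows from Theorems~\ref{Hyperbolicitysecondcase} and \ref{equivalence},'' and you have merely unpacked that line by spelling out the $m=0$ and $n=0$ specializations and the sanity check $p\leq kp+q$.
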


\begin{proof}
It follows from Theorems~\ref{Hyperbolicitysecondcase} and \ref{equivalence}.
\end{proof} 

\begin{theorem}\label{Hyperbolicitythirdcase}
Let $r_1, \dots, r_n, s_1, \dots, s_n, u_1, \dots, u_m, v_1, \dots, v_m, p, q, k$ be positive integers such that $2\leq r_1< \dots < r_{n} < p$, $2\leq u_1< \dots < u_{m} < p$, $q<p$ with $gcd(p, q) = 1$. Denote $\mathcal{C} = (r_1-1)s_1 + \dots + (r_n-1)s_n+(u_1-1)v_1 + \dots + (u_m-1)v_m$.
\begin{itemize}
\item Suppose $s_n\geq r_{n}> q\geq u_{m}$, $p-r_{n}\geq q$, $r_{n}$ doesn't divide $p$, $gcd(r_{n}, q) = 1$ or $gcd(p, r_{n}) = 1$, and $\mathcal{C}$ is different from
$(p-1)(r_n-q)$, or
\item Suppose $v_{m}\geq u_{m}>q\geq r_{n}$, $p-u_{m}\geq q$, $u_{m}$ doesn't divide $p$, $gcd(u_{m}, q) = 1$ or $gcd(p, u_{m}) = 1$, and $\mathcal{C}$ is different from
$(p-1)(u_{m}-q)$.
\end{itemize}
Then, the V-knot 
$$V = V((u_1,\overline{v_1}), \dots, (u_{m},\overline{v_{m}}), (r_1,s_1), \dots, (r_{n},s_{n}), (p, kp+q))$$ 
is hyperbolic.
\end{theorem}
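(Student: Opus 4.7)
The plan is to reduce each of the two cases to an application of Theorem~\ref{Hyperbolicity3}. In both cases I will present the V-knot $V$ as the closure of a braid of the form $\beta(\sigma_1\dots\sigma_{p-1})^{kp+q}$, where $\beta$ is a positive braid on $r$ strands containing a full twist on $r$ strands and with crossing number different from $(p-1)(r-q)$; here $r = r_n$ in the first case and $r = u_m$ in the second.

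For the first case, I will begin from the defining braid word for $V$, namely $A\cdot B\cdot T$, where $A$ denotes the $(u_i,\overline{v_i})$ sub-braid on the rightmost $u_m$ strands, $B$ denotes the $(r_i,s_i)$ sub-braid on the leftmost $r_n$ strands, and $T=(\sigma_1\dots\sigma_{p-1})^{kp+q}$. The hypothesis $p-r_n\geq q\geq u_m$ ensures that the sub-braids $A$ and $B$ act on disjoint sets of strands. The key move, described by the author just after Theorem~\ref{Hyperbolicitysecondcase}, is to push $A$ clockwise around the braid closure; under the first-case hypotheses this carries $A$ to $A'=(\sigma_1\dots\sigma_{u_1-1})^{v_1}\dots(\sigma_1\dots\sigma_{u_m-1})^{v_m}$ on the first $u_m$ strands while preserving the link type. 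Setting $\beta=A'B$, this is a positive braid supported on the first $r_n$ strands (since $u_m\leq r_n$) that contains the full twist $(\sigma_1\dots\sigma_{r_n-1})^{r_n}$ as a factor of $(\sigma_1\dots\sigma_{r_n-1})^{s_n}$ (since $s_n\geq r_n$), and its crossing number is exactly $\mathcal{C}$. All remaining hypotheses of Theorem~\ref{Hyperbolicity3} applied with $r=r_n$ are directly the first-case hypotheses, so the closure is hyperbolic.

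The second case is symmetric. I will first invoke Lemma~\ref{otherbraid} to swap the roles of the $(u_i)$ and $(r_i)$ parts, placing the $(u_i,v_i)$ sub-braid on the leftmost $u_m$ strands and the $(r_i,s_i)$ sub-braid on the rightmost $r_n$ strands. Then the analogous push --- now using $p-u_m\geq q\geq r_n$ --- carries the $(r_i,s_i)$ sub-braid into a positive braid on the first $r_n$ strands, and combining with the $(u_i,v_i)$ part yields a positive braid $\beta$ on the first $u_m$ strands with a full twist on $u_m$ strands (because $v_m\geq u_m$) and crossing number $\mathcal{C}\neq(p-1)(u_m-q)$; Theorem~\ref{Hyperbolicity3} with $r=u_m$ finishes this case.

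The main obstacle will be to justify the ``push around the closure'' move rigorously. Geometrically it is an ambient isotopy in $S^3$ carried out in the spirit of Propositions~\ref{isopoty1}--\ref{isopoty4}; the critical combinatorial input is the strand-disjointness of the two sub-braids together with the existence of a ``gap'' of size at least $q$ on the side opposite to the sub-braid being moved, which is exactly $p-r_n\geq q$ in the first case and $p-u_m\geq q$ in the second. Once this geometric move is in hand, the remaining arithmetic checks needed for Theorem~\ref{Hyperbolicity3} are routine.
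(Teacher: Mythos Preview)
Your proposal is essentially correct and, for the first case, follows the paper's argument closely: both of you push the block $A$ lying on the rightmost $u_m$ strands around the closure so that it merges with $B$ into a positive braid $\beta$ supported on the first $r_n$ strands, and then invoke Theorem~\ref{Hyperbolicity3}. One small over-claim: your assertion that $A$ lands precisely on the \emph{first} $u_m$ strands as $(\sigma_1\dots\sigma_{u_1-1})^{v_1}\cdots(\sigma_1\dots\sigma_{u_m-1})^{v_m}$ is stronger than needed and not obviously true in general---a single trip around the closure shifts $A$ left by $q$, so where it lands inside $\{1,\dots,r_n\}$ depends on $p,q,u_m$. The paper, more safely, claims only that the resulting $\beta$ is supported on $r_n$ strands, contains a full twist there, and has crossing number $\mathcal{C}$; that is all Theorem~\ref{Hyperbolicity3} requires.

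For the second case you take a different route from the paper. You invoke Lemma~\ref{otherbraid} to pass to the mirror representative, thereby swapping the roles of the $(r_i,s_i)$ and $(u_i,v_i)$ blocks, and then rerun the first-case argument. The paper instead works directly: it pushes $B$ clockwise so that it sits above $A$ on the \emph{last} $u_m$ strands, and then performs an extra conjugation by $(\sigma_1\dots\sigma_{p-1})^{p-u_m}$ (phrased as ``push $p-u_m$ horizontal strands of $T$ around the closure'') to shift that block back onto the first $u_m$ strands. Your symmetry reduction is cleaner in spirit, but it leaves one loose end: after applying Lemma~\ref{otherbraid} the torus part becomes $(\sigma_{p-1}\dots\sigma_1)^{kp+q}$, whereas Theorem~\ref{Hyperbolicity3} is stated for $(\sigma_1\dots\sigma_{p-1})^{kp+q}$. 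You must either observe that Theorem~\ref{Hyperbolicity3} has an immediate mirror version (its proof goes through with Proposition~\ref{isopoty2} in place of Proposition~\ref{isopoty1}), or rotate back and then carry out exactly the paper's extra conjugation step. Either fix is routine, but one of them is needed before Theorem~\ref{Hyperbolicity3} can be cited.
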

\begin{proof}
The V-knot $V$ is given by the braid
\begin{align*}
&(\sigma_{p-1}\sigma_{p-2}\dots\sigma_{{p-u_1+1}})^{v_1}\dots(\sigma_{p-1}\sigma_{p-2}\dots\sigma_{{p-u_m+1}})^{v_m}(\sigma_1\sigma_2\dots\sigma_{r_1-1})^{s_1}\dots(\sigma_{1}\sigma_{2}\dots\sigma_{r_n-1})^{s_n}\\
&(\sigma_{1}\sigma_{2}\dots\sigma_{{p-1}})^{kp+q}.
\end{align*}

Consider first that the first set of conditions is satisfied.
First we push the sub-braid provided by the parameters $(u_1,\overline{v_1}), \dots, (u_{m},\overline{v_{m}})$ anticlockwise once around the braid closure so that it becomes between the sub-braids provided by the parameters $(r_{n},s_{n})$ and $(p, kp+q)$. After that, we obtain a braid of the form
$\beta(\sigma_{1}\dots \sigma_{p-1})^{kp+q}$, with $\beta$ a positive braid with $r_{n}$ strands and a full twist on $r_{n}$ strands, that satisfies the hypotheses of Theorem~\ref{Hyperbolicity3}. Hence $K$ is hyperbolic.

Next suppose that the second set of conditions is satisfied. 
We push the sub-braid provided by the parameters $(r_1,s_1), \dots, (r_{n},s_{n})$ clockwise once around the braid closure so that it becomes above the sub-braid provided by the parameters $(u_1,\overline{v_1}), \dots, (u_{m},\overline{v_{m}})$. This yields a braid of the form
$\beta'(\sigma_{1}\dots \sigma_{p-1})^{kp+q}$ with $\beta'$ a positive braid on the last $u_{m}$ strands of $(\sigma_{1}\dots \sigma_{p-1})^{kp+q}$ with a full twist on $u_{m}$ strands. Then, we push $p - u_{m}$ horizontal strands of $(\sigma_{1}\dots \sigma_{p-1})^{kp+q}$ above $\beta'$ and around the braid closure to place $\beta'$ in the first strands of $(\sigma_{1}\dots \sigma_{p-1})^{kp+q}$. This produces a braid of the form $\beta(\sigma_{1}\dots \sigma_{p-1})^{kp+q}$ with $\beta$ a positive braid with $u_{m}$ strands and a full twist on $u_{m}$ strands that satisfies the hypotheses of Theorem~\ref{Hyperbolicity3}. This implies that $K$ is hyperbolic in this case as well.
\end{proof} 

\begin{corollary}\label{hyperbolicT-links3}
Let $r_1, \dots, r_n, s_1, \dots, s_n, u_1, \dots, u_m, v_1, \dots, v_m, p, q, k$ be positive integers such that $2\leq r_1< \dots < r_{n} < p$, $2\leq u_1< \dots < u_{m} < p$, $q<p$ with $gcd(p, q) = 1$. Denote $\mathcal{C} = (r_1-1)s_1 + \dots + (r_n-1)s_n+(u_1-1)v_1 + \dots + (u_m-1)v_m$.
\begin{itemize}
\item Suppose $s_n\geq r_{n}> q\geq u_{m}$, $p-r_{n}\geq q$, $r_{n}$ doesn't divide $p$, $gcd(r_{n}, q) = 1$ or $gcd(p, r_{n}) = 1$, and $\mathcal{C}$ is different from
$(p-1)(r_n-q)$, or
\item Suppose $v_m\geq u_{m}>q\geq r_{n}$, $p-u_{m}\geq q$, $u_{m}$ doesn't divide $p$, $gcd(u_{m}, q) = 1$ or $gcd(p, u_{m}) = 1$, and $\mathcal{C}$ is different from
$(p-1)(u_{m}-q)$.
\end{itemize}
Then, the T-knots
\begin{align*}
&T( (r_1,s_1), \dots, (r_{n},s_{n}), (p, kp+q-u_m), (p+ v_m, u_m - u_{m-1}), \dots, (p+v_m+ \dots + v_{2}, u_2-u_1),\\
&(p+ v_m+\dots + v_{1}, u_1)) \text{ and } T((u_1, v_1), \dots, (u_{m}, v_{m}), (kp+q, p-r_n), (kp+q+ s_n, r_n - r_{n-1}),\\
&\dots, (kp+q+s_n+ \dots + s_{2}, r_2-r_1), (kp+q+ s_n+ \dots + s_{1}, r_1))
\end{align*}
are hyperbolic.
\end{corollary}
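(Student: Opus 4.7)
The plan is to derive this corollary as a direct consequence of two results already established in the paper: the T-link/V-link dictionary in Theorem~\ref{equivalenceTV-links}, and the hyperbolicity criterion for V-knots in Theorem~\ref{Hyperbolicitythirdcase}. I expect no real obstacle here; the statement is essentially a translation, and all the substantive geometric work has already been carried out in those two earlier theorems.

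First I would observe that the two T-knots listed in the statement match precisely the T-link presentations attached by Theorem~\ref{equivalenceTV-links} to the V-knot
$$V_1 = V((u_1,\overline{v_1}), \dots, (u_{m},\overline{v_{m}}), (r_1,s_1), \dots, (r_{n},s_{n}), (p, kp+q)),$$
once the second coordinate $q$ appearing in the statement of that theorem is replaced by $kp+q$. Indeed, the blocks $(p,kp+q-u_m),(p+v_m,u_m-u_{m-1}),\dots$ in the first T-knot, and the blocks $(kp+q,p-r_n),(kp+q+s_n,r_n-r_{n-1}),\dots$ in the second T-knot, are exactly the expressions prescribed by Theorem~\ref{equivalenceTV-links} under this substitution. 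Hence both T-knots of the corollary are equivalent, as links, to $V_1$.

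Next I would verify that the hypotheses of the corollary are precisely those of Theorem~\ref{Hyperbolicitythirdcase} applied to $V_1$. The two alternative sets of conditions displayed in the corollary reproduce, term by term, the two cases of Theorem~\ref{Hyperbolicitythirdcase}: the inequalities $s_n\geq r_n>q\geq u_m$ (respectively $v_m\geq u_m>q\geq r_n$), the non-divisibility of $p$ by $r_n$ (respectively by $u_m$), the gcd conditions, the clearance bound $p-r_n\geq q$ (respectively $p-u_m\geq q$), and the requirement that the quantity $\mathcal{C}$, which is exactly the crossing number of the auxiliary subbraid of $V_1$ coming from the blocks $(r_i,s_i)$ and $(u_j,\overline{v_j})$, avoids the exceptional value $(p-1)(r_n-q)$ (respectively $(p-1)(u_m-q)$). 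Applying Theorem~\ref{Hyperbolicitythirdcase} then shows that $V_1$ is hyperbolic, and consequently the two T-knots of the statement, being equivalent to $V_1$, are hyperbolic as well.
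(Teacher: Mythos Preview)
Your proposal is correct and follows essentially the same approach as the paper, which simply states that the result follows from Theorems~\ref{Hyperbolicitythirdcase} and~\ref{equivalence}. The only cosmetic difference is that you cite Theorem~\ref{equivalenceTV-links} rather than Theorem~\ref{equivalence}, but the relevant content (the explicit T-link/V-link correspondence) is the same in both.
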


\begin{proof}
It follows from Theorems~\ref{Hyperbolicitythirdcase} and \ref{equivalence}.
\end{proof}

\bibliographystyle{amsplain}  

\bibliography{referencess}

\end{document}